\title{Bijections for Entringer families}
\author{Yoann Gelineau}
\address[Yoann Gelineau]{Universit\'{e} de Lyon; Universit\'{e} Lyon 1; Institut Camille Jordan; UMR 5208 du CNRS; 43, boulevard du 11 novembre 1918, F-69622 Villeurbanne Cedex, France}
\email{gelineau@math.univ-lyon1.fr}
\author{Heesung Shin}
\address[Heesung Shin]{Universit\'{e} de Lyon; Universit\'{e} Lyon 1; Institut Camille Jordan; UMR 5208 du CNRS; 43, boulevard du 11 novembre 1918, F-69622 Villeurbanne Cedex, France}
\email{hshin@math.univ-lyon1.fr}
\author{Jiang Zeng}
\address[Jiang Zeng]{Universit\'{e} de Lyon; Universit\'{e} Lyon 1; Institut Camille Jordan; UMR 5208 du CNRS; 43, boulevard du 11 novembre 1918, F-69622 Villeurbanne Cedex, France}
\email{gelineau@math.univ-lyon1.fr}
\date{\today}
\newtheorem{thm}{Theorem}[section]
\newtheorem{cor}[thm]{Corollary}
\newtheorem{prop}[thm]{Proposition}
\theoremstyle{definition}
\newtheorem{defn}[thm]{Definition}
\newtheorem{rmk}[thm]{Remark}
\newtheorem{ex}[thm]{Example}
\DeclareMathOperator\inv{inv}
\DeclareMathOperator\threeonetwo{31-2}
\newcommand\set[1]{\left\{#1\right\}}
\newcommand\abs[1]{\left|#1\right|}
\def\ESnk{\mathcal{ES}_{n,k}}
\def\ESn{\mathcal{ES}_{n}}
\def\A{\mathcal{DU}}
\def\An{\mathcal{DU}_n}
\def\Dn{\mathcal{ES}_n}
\def\Ank{\mathcal{DU}_{n,k}}
\def\ank{E_{n,k}}
\def\T{\mathcal{T}}
\def\Tn{\mathcal{BT}_n}
\def\Tnk{\mathcal{BT}_{n,k}}
\def\312{\threeonetwo}
\def\rev{\text{rev}}
\renewcommand{\qedsymbol}{$\blacksquare$}
\begin{document}
\maketitle

\begin{abstract}
Andr\'e proved that the number of down-up permutations on $\{1, 2, ..., n\}$  is equal to the Euler number $E_n$.
A refinement of Andr\'e's result was given by Entringer, who proved that counting  down-up permutations  according to the first element gives rise to Seidel's
triangle  $(E_{n,k})$ for  computing the Euler numbers.
In a series of papers, using generating function method and induction, Poupard gave several further  combinatorial interpretations for $E_{n,k}$  both in down-up permutations and increasing trees.
Kuznetsov, Pak, and Postnikov have given more combinatorial interpretations of $E_{n,k}$ in the model of trees.
The aim of this paper is to provide bijections between the different models for $E_{n,k}$ as well as some new interpretations.
 In particular, we give  the first  explicit one-to-one correspondence between Entringer's down-up permutation model and Poupard's increasing tree model.
\end{abstract}

\tableofcontents

\section{Introduction}

The \emph{Euler numbers} $E_{n}$ are defined by the generating function\begin{align*}
 \sum\limits_{n \geq 0} E_n \frac{x^n}{n!} & = \tan(x)+\sec(x) \\
 & = 1+x+\frac{x^2}{2!} + 2 \frac{x^3}{3!} + 5 \frac{x^4}{4!} + 16 \frac{x^5}{5!} + 61 \frac{x^6}{6!} + 272 \frac{x^7}{7!} + 1385 \frac{x^8}{8!} + \cdots. \end{align*}
Let $\An$ be the set of \emph{down-up  permutations of $[n]:=\set{1,2,\dots,n}$}, that is, the permutations $\pi=\pi_1\pi_2\dots\pi_n$ on $[n]$ satisfying
$\pi_1 > \pi_2 < \pi_3 > \pi_4 < \cdots$. For example, the down-up permutations of $[4]$ are:
\[ 
2\,1\,4\,3, \qquad 3\,2\,4\,1, \qquad 3\,1\,4\,2, \qquad 4\,2\,3\,1, \qquad 4\,1\,3\,2.
 \]
Andr\'{e}~\cite{And79} proved that  the cardinality of the set $\An$ equals the Euler number $E_n$. Counting  the down-up permutations according to the first term leads to 
the  \emph{Entringer numbers}~\cite{Ent66}. More precisely, let  $\Ank$ be the set of permutations $\pi \in \An$ such that $\pi_1=k$ and $E_{n,k}$
  the  cardinality of $\Ank$. The first values of $E_{n,k}$ are given in Table~\ref{tab1}.

\begin{table}[!h]
\begin{tabular}{c|cccccccc}
$n \setminus k$ & $1$ & $2$ & $3$ & $4$ & $5$ & $6$ & $7$ & \\
\hline
$1$ & $1$ &     &  & & & &\\
$2$ &   0  & $1$ &     & & & &\\
$3$ &    0 & $1$ & $1$ &     &     & &\\
$4$ &    0 & $1$ & $2$ & $2$ &     & &\\
$5$ &    0 & $2$ & $4$ & $5$ & $5$ &  & \\
$6$ &   0  & $5$ & $10$ & $14$ & $16$ & $16$ & \\
$7$ &   0  & $16$ & $32$ & $46$ & $56$ & $61$  & $61$ \\
\multicolumn{2}{c}{\quad } \\
\end{tabular}
\quad
\caption{\label{tab1} The first values of Entringer numbers $\ank$}
\end{table}

 \begin{thm}[Entringer] \label{Ent1} The numbers
$(E_{n,k})$ ( $n \geq k\geq 1$) are defined  by
\begin{equation} \label{eq1}
E_{1,1}=1,\quad E_{n,1}=0 \;(n\geq 2),\qquad E_{n,k} = E_{n,k-1} + E_{n-1,n+1-k}.
\end{equation}
\end{thm}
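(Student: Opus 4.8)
The plan is to give a direct bijective proof. The two initial conditions are immediate: the only permutation of $[1]$ is $\pi=1$, so $E_{1,1}=1$; and for $n\ge 2$ a down-up permutation satisfies $\pi_1>\pi_2\ge 1$, hence $\pi_1\ge 2$ and $E_{n,1}=0$. So I would fix $k$ with $2\le k\le n$ and split $\Ank$ according to whether $\pi_2=k-1$ or not, matching the block $\{\pi\in\Ank:\pi_2\ne k-1\}$ with $\mathcal{DU}_{n,k-1}$ and the block $\{\pi\in\Ank:\pi_2=k-1\}$ with $\mathcal{DU}_{n-1,n+1-k}$.

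For the first block the map I would use is: exchange the two entries equal to $k$ and $k-1$, i.e.\ swap $\pi_1=k$ with the entry occupying the position of $k-1$. Since $\pi_2\ne k-1$ and $\pi_1=k\ne k-1$, that position is some $j\ge 3$, and one checks, according to whether $j$ is a peak or a valley, that the word stays down-up: because $k$ and $k-1$ are consecutive values, interchanging them disturbs no comparison with a third entry, and at position $1$ the new leading entry $k-1$ still exceeds $\pi_2<k-1$. Applying the very same recipe to an element of $\mathcal{DU}_{n,k-1}$ — where symmetrically the entry $k$ cannot sit at position $2$ — gives the inverse, so $\#\{\pi\in\Ank:\pi_2\ne k-1\}=E_{n,k-1}$.

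For the second block I would delete the leading entry $k$; the remainder is an up-down permutation of $[n]\setminus\{k\}$ beginning with $k-1$, which after order-preserving standardization becomes an up-down permutation of $[n-1]$ beginning with $k-1$ (the prefix is fixed since $k-1<k$). Taking the complement $x\mapsto n-x$ on $[n-1]$ turns it into a down-up permutation of $[n-1]$ beginning with $n-(k-1)=n+1-k$. Every step is reversible — to invert, complement back, un-standardize into $[n]\setminus\{k\}$, and prepend $k$, the down-up condition at the junction being exactly the inequality $k-1<\pi_3$ furnished by the up-down pattern — so this block has cardinality $E_{n-1,n+1-k}$. Adding the two counts yields \eqref{eq1} for all $2\le k\le n$.

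The verifications that prepending/deleting an entry and standardizing/complementing respect the alternating patterns are routine. The one genuinely delicate point is the first bijection: one must confirm that swapping $k$ with $k-1$ keeps the word down-up at every position, and this is precisely where the hypothesis $\pi_2\ne k-1$ and the consecutiveness of the two swapped values enter. I would therefore treat separately the cases where $k-1$ occupies a peak and where it occupies a valley, not forgetting the boundary case in which $k-1$ sits in the last position.
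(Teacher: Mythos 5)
Your proof is correct, and both halves of your decomposition do what you claim: since $\pi_2<\pi_1=k$ and $\pi_2\ne k-1$ force $\pi_2<k-1$, and since the positions of the consecutive values $k$ and $k-1$ are non-adjacent (positions $1$ and $j\ge 3$), the swap preserves the down-up pattern everywhere at once --- your closing worry about separating peak, valley and last-position cases is unnecessary, as the argument is uniform. The route differs from the paper's in an instructive way. The paper does not prove Theorem~\ref{Ent1} where it is stated (it is quoted from Entringer); the recurrence is re-derived in Section~\ref{s-dnk} via the bijection $\psi$ onto encoding sequences together with Proposition~\ref{s-rmq2}: a sequence in $\ESnk$ begins with either $(k,k-1)$ or $(k,k-1)^{*}$, which at the permutation level is exactly your dichotomy $\pi_2\ne k-1$ versus $\pi_2=k-1$, and the first branch is literally your swap of the values $k$ and $k-1$. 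The second branch is where you diverge: the paper deletes the two letters $k-1$ and $k$, obtaining a down-up permutation on $n-2$ letters whose first entry must exceed the $k-2$ smallest remaining values, and then invokes the telescoped identity $E_{n-2,k-1}+\cdots+E_{n-2,n-2}=E_{n-1,n+1-k}$; you instead delete only the letter $k$, standardize, and complement, landing bijectively in $\mathcal{DU}_{n-1,n+1-k}$ in a single step. Your version buys a cleaner, summation-free bijective proof of the recurrence carried out entirely on permutations; the paper's version buys the intermediate object $\ESnk$, which it needs anyway as the stepping stone to the tree bijection $\varphi$.
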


Iterating the above recurrence, we get  $E_{n+1,n+1}=E_{n,n}+E_{n,n-1}+\cdots +E_{n,1}$,
which is equal to $E_n$
by Andr\'e's result. Hence the Euler numbers $E_n=E_{n+1,n+1}$   are the diagonal entries in Table~\ref{tab1}.
As an historical remark, Entringer's recurrence \eqref{eq1} is just a combinatorial interpretation of the Seidel's scheme \cite{Sei77} to compute Euler numbers, i.e., 
 \[\footnotesize
\begin{tabular}{ccccccccc}
& &          &&  $E_{1,1}$ \\
&& & $E_{2,1}$ & $\rightarrow$ & $E_{2,2}$ \\
&& $E_{3,3}$ & $\leftarrow$ & $E_{3,2}$ & $\leftarrow$ & $E_{3,1}$ \\
&$E_{1,1}$ & $\rightarrow$ & $E_{4,2}$ & $\rightarrow$ & $E_{4,3}$ & $\rightarrow$ & $E_{4,4}$& \\
$E_{5,5}$&$\leftarrow$ &$E_{5,4}$ & $\leftarrow$ & $E_{5,3}$ & $\leftarrow$ & $E_{5,2}$ & $\leftarrow$ & $E_{5,1}$ \\
& & & $\cdots$ \end{tabular}
\Longleftrightarrow \begin{tabular}{ccccccccc}
&&&          &  1 \\
&& & 0 & $\rightarrow$ & 1 \\
&& 1& $\leftarrow$ & 1 & $\leftarrow$ &0  \\
&0 & $\rightarrow$ & 1& $\rightarrow$ & 2& $\rightarrow$ & 2 \\
5&$\leftarrow$ &5& $\leftarrow$ & 4 & $\leftarrow$ & 2 & $\leftarrow$ & 0 \\
& & & $\cdots$ \end{tabular}
 \]
The above  scheme  was  later rediscovered several times in the literature (see \cite{Kem33, MSY96}).
 A recent survey on down-up permutations and  Euler numbers is given by
 Stanley~\cite{Sta09}.

A sequence of sets $(X_{n,k})_{1 \leq k \leq n}$ is called an \emph{Entringer family} if the cardinality of $X_{n,k}$ is equal to $E_{n,k}$ for $1 \leq k \leq n$.

Let $X=\{x_1,\ldots,x_n\}_{<}$ be an ordered set such that  $x_1 < \cdots < x_n$.
An \emph{increasing tree} on $X$ is a spanning tree of the complete graph on $X$, rooted at $x_1$ and oriented from the smallest
vertex $x_1$, such that the vertices increase along the edges. Let $\Tn$ be the set of {\em binary  increasing trees} $T$ on $[n]$, i.e., the increasing trees such that at most two edges go out from every vertex (see Figure~\ref{fig012}).

\begin{figure}[!h]
\centering
\begin{pgfpicture}{2.20mm}{3.70mm}{15.30mm}{17.50mm}
\pgfsetxvec{\pgfpoint{0.70mm}{0mm}}
\pgfsetyvec{\pgfpoint{0mm}{0.70mm}}
\color[rgb]{0,0,0}\pgfsetlinewidth{0.30mm}\pgfsetdash{}{0mm}
\pgfcircle[fill]{\pgfxy(10.00,10.00)}{0.70mm}
\pgfcircle[stroke]{\pgfxy(10.00,10.00)}{0.70mm}
\pgfcircle[fill]{\pgfxy(10.00,15.00)}{0.70mm}
\pgfcircle[stroke]{\pgfxy(10.00,15.00)}{0.70mm}
\pgfcircle[fill]{\pgfxy(15.00,15.00)}{0.70mm}
\pgfcircle[stroke]{\pgfxy(15.00,15.00)}{0.70mm}
\pgfcircle[fill]{\pgfxy(15.00,20.00)}{0.70mm}
\pgfcircle[stroke]{\pgfxy(15.00,20.00)}{0.70mm}
\pgfmoveto{\pgfxy(10.00,15.00)}\pgflineto{\pgfxy(10.00,10.00)}\pgfstroke
\pgfmoveto{\pgfxy(10.00,10.00)}\pgflineto{\pgfxy(15.00,15.00)}\pgfstroke
\pgfmoveto{\pgfxy(15.00,20.00)}\pgflineto{\pgfxy(15.00,15.00)}\pgfstroke
\pgfputat{\pgfxy(8.00,14.00)}{\pgfbox[bottom,left]{\fontsize{7.97}{9.56}\selectfont \makebox[0pt][r]{2}}}
\pgfputat{\pgfxy(8.00,9.00)}{\pgfbox[bottom,left]{\fontsize{7.97}{9.56}\selectfont \makebox[0pt][r]{1}}}
\pgfputat{\pgfxy(17.00,14.00)}{\pgfbox[bottom,left]{\fontsize{7.97}{9.56}\selectfont 3}}
\pgfputat{\pgfxy(17.00,19.00)}{\pgfbox[bottom,left]{\fontsize{7.97}{9.56}\selectfont 4}}
\end{pgfpicture}\hspace{1cm}%
\centering
\begin{pgfpicture}{2.20mm}{3.70mm}{15.30mm}{17.50mm}
\pgfsetxvec{\pgfpoint{0.70mm}{0mm}}
\pgfsetyvec{\pgfpoint{0mm}{0.70mm}}
\color[rgb]{0,0,0}\pgfsetlinewidth{0.30mm}\pgfsetdash{}{0mm}
\pgfcircle[fill]{\pgfxy(10.00,10.00)}{0.70mm}
\pgfcircle[stroke]{\pgfxy(10.00,10.00)}{0.70mm}
\pgfcircle[fill]{\pgfxy(10.00,15.00)}{0.70mm}
\pgfcircle[stroke]{\pgfxy(10.00,15.00)}{0.70mm}
\pgfcircle[fill]{\pgfxy(10.00,20.00)}{0.70mm}
\pgfcircle[stroke]{\pgfxy(10.00,20.00)}{0.70mm}
\pgfcircle[fill]{\pgfxy(15.00,15.00)}{0.70mm}
\pgfcircle[stroke]{\pgfxy(15.00,15.00)}{0.70mm}
\pgfmoveto{\pgfxy(10.00,15.00)}\pgflineto{\pgfxy(10.00,10.00)}\pgfstroke
\pgfmoveto{\pgfxy(10.00,10.00)}\pgflineto{\pgfxy(15.00,15.00)}\pgfstroke
\pgfmoveto{\pgfxy(10.00,20.00)}\pgflineto{\pgfxy(10.00,15.00)}\pgfstroke
\pgfputat{\pgfxy(8.00,14.00)}{\pgfbox[bottom,left]{\fontsize{7.97}{9.56}\selectfont \makebox[0pt][r]{2}}}
\pgfputat{\pgfxy(8.00,9.00)}{\pgfbox[bottom,left]{\fontsize{7.97}{9.56}\selectfont \makebox[0pt][r]{1}}}
\pgfputat{\pgfxy(8.00,19.00)}{\pgfbox[bottom,left]{\fontsize{7.97}{9.56}\selectfont \makebox[0pt][r]{3}}}
\pgfputat{\pgfxy(17.00,14.00)}{\pgfbox[bottom,left]{\fontsize{7.97}{9.56}\selectfont 4}}
\end{pgfpicture}\hspace{1cm}%
\centering
\begin{pgfpicture}{2.20mm}{3.70mm}{15.30mm}{17.50mm}
\pgfsetxvec{\pgfpoint{0.70mm}{0mm}}
\pgfsetyvec{\pgfpoint{0mm}{0.70mm}}
\color[rgb]{0,0,0}\pgfsetlinewidth{0.30mm}\pgfsetdash{}{0mm}
\pgfcircle[fill]{\pgfxy(10.00,10.00)}{0.70mm}
\pgfcircle[stroke]{\pgfxy(10.00,10.00)}{0.70mm}
\pgfcircle[fill]{\pgfxy(10.00,15.00)}{0.70mm}
\pgfcircle[stroke]{\pgfxy(10.00,15.00)}{0.70mm}
\pgfcircle[fill]{\pgfxy(10.00,20.00)}{0.70mm}
\pgfcircle[stroke]{\pgfxy(10.00,20.00)}{0.70mm}
\pgfcircle[fill]{\pgfxy(15.00,20.00)}{0.70mm}
\pgfcircle[stroke]{\pgfxy(15.00,20.00)}{0.70mm}
\pgfmoveto{\pgfxy(10.00,15.00)}\pgflineto{\pgfxy(10.00,10.00)}\pgfstroke
\pgfmoveto{\pgfxy(10.00,15.00)}\pgflineto{\pgfxy(15.00,20.00)}\pgfstroke
\pgfmoveto{\pgfxy(10.00,20.00)}\pgflineto{\pgfxy(10.00,15.00)}\pgfstroke
\pgfputat{\pgfxy(8.00,14.00)}{\pgfbox[bottom,left]{\fontsize{7.97}{9.56}\selectfont \makebox[0pt][r]{2}}}
\pgfputat{\pgfxy(8.00,9.00)}{\pgfbox[bottom,left]{\fontsize{7.97}{9.56}\selectfont \makebox[0pt][r]{1}}}
\pgfputat{\pgfxy(8.00,19.00)}{\pgfbox[bottom,left]{\fontsize{7.97}{9.56}\selectfont \makebox[0pt][r]{3}}}
\pgfputat{\pgfxy(17.00,19.00)}{\pgfbox[bottom,left]{\fontsize{7.97}{9.56}\selectfont 4}}
\end{pgfpicture}\hspace{1cm}%
\centering
\begin{pgfpicture}{16.20mm}{3.70mm}{29.30mm}{17.50mm}
\pgfsetxvec{\pgfpoint{0.70mm}{0mm}}
\pgfsetyvec{\pgfpoint{0mm}{0.70mm}}
\color[rgb]{0,0,0}\pgfsetlinewidth{0.30mm}\pgfsetdash{}{0mm}
\pgfcircle[fill]{\pgfxy(30.00,10.00)}{0.70mm}
\pgfcircle[stroke]{\pgfxy(30.00,10.00)}{0.70mm}
\pgfcircle[fill]{\pgfxy(30.00,15.00)}{0.70mm}
\pgfcircle[stroke]{\pgfxy(30.00,15.00)}{0.70mm}
\pgfcircle[fill]{\pgfxy(30.00,20.00)}{0.70mm}
\pgfcircle[stroke]{\pgfxy(30.00,20.00)}{0.70mm}
\pgfcircle[fill]{\pgfxy(35.00,15.00)}{0.70mm}
\pgfcircle[stroke]{\pgfxy(35.00,15.00)}{0.70mm}
\pgfmoveto{\pgfxy(30.00,15.00)}\pgflineto{\pgfxy(30.00,10.00)}\pgfstroke
\pgfmoveto{\pgfxy(30.00,10.00)}\pgflineto{\pgfxy(35.00,15.00)}\pgfstroke
\pgfmoveto{\pgfxy(30.00,15.00)}\pgflineto{\pgfxy(30.00,20.00)}\pgfstroke
\pgfputat{\pgfxy(28.00,14.00)}{\pgfbox[bottom,left]{\fontsize{7.97}{9.56}\selectfont \makebox[0pt][r]{2}}}
\pgfputat{\pgfxy(28.00,9.00)}{\pgfbox[bottom,left]{\fontsize{7.97}{9.56}\selectfont \makebox[0pt][r]{1}}}
\pgfputat{\pgfxy(28.00,19.00)}{\pgfbox[bottom,left]{\fontsize{7.97}{9.56}\selectfont \makebox[0pt][r]{4}}}
\pgfputat{\pgfxy(37.00,14.00)}{\pgfbox[bottom,left]{\fontsize{7.97}{9.56}\selectfont 3}}
\end{pgfpicture}\hspace{1cm}%
\centering
\begin{pgfpicture}{16.20mm}{3.70mm}{23.70mm}{20.65mm}
\pgfsetxvec{\pgfpoint{0.70mm}{0mm}}
\pgfsetyvec{\pgfpoint{0mm}{0.70mm}}
\color[rgb]{0,0,0}\pgfsetlinewidth{0.30mm}\pgfsetdash{}{0mm}
\pgfcircle[fill]{\pgfxy(30.00,10.00)}{0.70mm}
\pgfcircle[stroke]{\pgfxy(30.00,10.00)}{0.70mm}
\pgfcircle[fill]{\pgfxy(30.00,15.00)}{0.70mm}
\pgfcircle[stroke]{\pgfxy(30.00,15.00)}{0.70mm}
\pgfcircle[fill]{\pgfxy(30.00,20.00)}{0.70mm}
\pgfcircle[stroke]{\pgfxy(30.00,20.00)}{0.70mm}
\pgfcircle[fill]{\pgfxy(30.00,25.00)}{0.70mm}
\pgfcircle[stroke]{\pgfxy(30.00,25.00)}{0.70mm}
\pgfmoveto{\pgfxy(30.00,15.00)}\pgflineto{\pgfxy(30.00,10.00)}\pgfstroke
\pgfputat{\pgfxy(28.00,14.00)}{\pgfbox[bottom,left]{\fontsize{7.97}{9.56}\selectfont \makebox[0pt][r]{2}}}
\pgfputat{\pgfxy(28.00,9.00)}{\pgfbox[bottom,left]{\fontsize{7.97}{9.56}\selectfont \makebox[0pt][r]{1}}}
\pgfputat{\pgfxy(28.00,18.50)}{\pgfbox[bottom,left]{\fontsize{7.97}{9.56}\selectfont \makebox[0pt][r]{3}}}
\pgfputat{\pgfxy(28.00,23.50)}{\pgfbox[bottom,left]{\fontsize{7.97}{9.56}\selectfont \makebox[0pt][r]{4}}}
\pgfmoveto{\pgfxy(30.00,20.00)}\pgflineto{\pgfxy(30.00,15.00)}\pgfstroke
\pgfmoveto{\pgfxy(30.00,25.00)}\pgflineto{\pgfxy(30.00,20.00)}\pgfstroke
\end{pgfpicture}%
\caption{\label{fig012} The binary increasing  trees on $[4]$}
\end{figure}

Foata and Sch\"{u}tzenberger proved in  \cite[\S5]{FS73} that
the Euler number $E_n$ is the cardinality of $\Tn$.
A one-to-one correspondance between $\An$ and $\Tn$ was then
constructed by Donaghey \cite{Don75} (see also \cite{Cal05}).
However the tree counterpart of Entringer's result was found only in 1982 by
Poupard~\cite{Pou82}. If $T$ is a binary  increasing tree and if $(i,j)$ is an edge in $T$, $i < j$, we call $i$ the \emph{parent} of $j$, and $j$ a \emph{child} of $i$. If $i$ has no child, we say that $i$ is a \emph{leaf} of $T$. A \emph{path} in $T$ is a sequence of vertices $(a_i)$ such that $a_i$ is a child of $a_{i-1}$ in T, and the {\em minimal path} of $T$ is the path $(a_i)_{1 \leq i \leq \ell}$ such that $a_1=1$, $a_i$ ($i=2,\ldots,\ell$) is the smallest child of $a_{i-1}$ and $a_{\ell}$ is a leaf, denoted by $p(T)$. Let's denote by $\Tnk$ the set of trees $T \in \Tn$ such that $p(T)=k$.

\begin{thm}[Poupard] \label{thm-pou1}The sequence $(\Tnk)_{1 \leq k \leq n}$ is an Entringer family. \end{thm}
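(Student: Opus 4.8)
Write $a_{n,k}:=\abs{\mathcal{BT}_{n,k}}$. By Theorem~\ref{Ent1} it is enough to show that the numbers $a_{n,k}$ satisfy the same initial conditions and recurrence~\eqref{eq1}; the plan is to transcribe to trees the recursive structure behind Entringer's theorem.

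The initial conditions are immediate. The set $\mathcal{BT}_1$ consists of the one‑vertex tree, with minimal path $(1)$, so $a_{1,1}=1$. For $n\ge2$, any $T\in\mathcal{BT}_n$ has more than one vertex, so the root $1$ has a child, the minimal path reaches a leaf other than $1$, and $p(T)\ge2$; hence $\mathcal{BT}_{n,1}=\varnothing$ and $a_{n,1}=0$. We also record, for use below, that the only vertex smaller than $2$ is the root, so $2$ is always a child of $1$; thus the minimal path of any $T\in\mathcal{BT}_n$ with $n\ge2$ begins $1,2,\dots$, and $p(T)=2$ exactly when $2$ is a leaf of $T$.

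It remains to prove $a_{n,k}=a_{n,k-1}+a_{n-1,\,n+1-k}$ for $2\le k\le n$. We would construct an explicit bijection
\[
\mathcal{BT}_{n,k}\ \longrightarrow\ \mathcal{BT}_{n,k-1}\ \sqcup\ \mathcal{BT}_{n-1,\,n+1-k},
\]
obtained by partitioning $\mathcal{BT}_{n,k}$ into two classes according to a local condition at the leaf $a_\ell=k$ of the minimal path. The first class should be put in bijection with $\mathcal{BT}_{n,k-1}$ by a relabelling‑and‑regrafting that moves the end of the minimal path back from $k$ to $k-1$ while fixing the rest of the tree; the second, ``exceptional'', class should be put in bijection with $\mathcal{BT}_{n-1,\,n+1-k}$ by deleting the leaf $a_\ell=k$, relabelling $[n]\setminus\set{k}$ onto $[n-1]$, and applying a reversal‑type straightening that realises the reflection $k\mapsto n+1-k$ visible in Seidel's boustrophedon scheme. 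Adding the two cardinalities then gives~\eqref{eq1}.

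The delicate point --- which I expect to be the main obstacle --- is the combinatorics of the minimal path in both bijections. One must (i) pin down the correct local condition separating the two classes so that the decomposition is exhaustive and disjoint; (ii) check that the map defining the first bijection never produces a vertex of out‑degree $3$ and genuinely shifts $p$ from $k-1$ to $k$ --- this already excludes naive recipes such as simply transposing the labels $k-1$ and $k$, which does nothing when $k-1$ and $k$ are sibling leaves, destroys the increasing property when $k-1$ is the parent of $k$, and in general can displace the minimal path by changing which child of a vertex is smallest; and (iii) verify that the delete--relabel--straighten step of the second bijection produces an honest binary increasing tree whose minimal path ends at \emph{exactly} $n+1-k$. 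As an alternative, one could deduce the theorem from Theorem~\ref{Ent1} by transporting it along an explicit bijection between $\An$ and $\mathcal{BT}_n$ carrying the first letter of a down‑up permutation to $p(\cdot)$; on the permutation side the analogue of~\eqref{eq1} is transparent, but constructing such a bijection is itself one of the aims of this paper.
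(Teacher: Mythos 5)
Your initial conditions are fine, but the heart of the argument is missing: you never actually define the partition of $\mathcal{BT}_{n,k}$ nor either of the two maps. Everything after ``We would construct an explicit bijection'' describes what a proof would have to accomplish, together with a (correct) list of reasons why the naive candidates fail; identifying the obstacles is not the same as overcoming them, and as written the recurrence $a_{n,k}=a_{n,k-1}+a_{n-1,\,n+1-k}$ is not established. This is precisely the difficulty the paper is organized around: it records that Poupard's original proof was analytic and that a direct tree-level explanation of \eqref{eq1} was raised as an open problem in \cite{KPP94}.

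The paper's actual proof is the route you mention and set aside as circular: it constructs the explicit bijection $\Psi=\varphi\circ\psi:\mathcal{DU}_{n,k}\to\mathcal{BT}_{n,k}$ through the intermediate model $\mathcal{ES}_{n,k}$ of encoding sequences, and Theorem~\ref{thm-pou1} then follows from Theorem~\ref{Ent1}. Only afterwards (Subsection~\ref{s-rmq}) does it extract a tree-level reading of \eqref{eq1}, and the details there differ from your plan in two ways worth noting. First, the local condition is whether the edge $(k-1,k)$ is \emph{removable}, i.e.\ whether $k-1$ is the parent of $k$ and the other child $m$ of $k-1$ is not greater than the sibling of $k-1$; in the non-removable case the tree is sent into $\mathcal{BT}_{n,k-1}$ by the label-swap and regrafting steps (a2) and (b) of $\varphi^{-1}$, which are exactly the constructions your item (ii) asks for. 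Second, and more importantly, the removable case does \emph{not} proceed by deleting the single leaf $k$ and applying a ``reversal-type straightening'' onto $\mathcal{BT}_{n-1,\,n+1-k}$ --- no such direct complementation on trees is exhibited, and that is exactly the step your sketch leaves undefined. Instead both $k-1$ and $k$ are deleted, producing a tree on $n-2$ vertices whose minimal path must end above the $k-2$ smallest labels, and the contribution $E_{n-2,k-1}+\cdots+E_{n-2,n-2}=E_{n-1,\,n+1-k}$ is obtained by summing over those endpoints, not by a bijection with $\mathcal{BT}_{n-1,\,n+1-k}$. To complete your argument you would either have to supply these constructions or fall back on the permutation--tree bijection that constitutes the paper's proof.
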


Note that contrary to the case of  down-up permutations, it is not easy to interpret recurrence \eqref{eq1}
 in the model of binary  increasing increasing trees.
 Indeed, Donaghey's bijection doesn't induce a bijection between $\Ank$ and $\Tnk$ and Poupard's proof in \cite{Pou82} was analytic in nature.
 Finding a direct explanation in the model of trees was then raised as an open problem in \cite{KPP94}.
The first aim of this paper is to build a bijection between $\Ank$ and $\Tnk$ and answer the above open problem.
In other words, we have the following theorem.

\begin{thm}\label{mainth}  For all $n \geq 1$, there is an explicit  bijection $\Psi: \An\to \Tn$ satisfying
\[ \forall \pi \in \An, \quad \textsc{First}\,(\pi)=\textsc{Leaf}\,(\Psi(\pi)), \] 
where $\textsc{First}\,(\pi)$ is the first element of the permutation $\pi$ 
 and $\textsc{Leaf}\,(\Psi(\pi))$ is the leaf of the minimal path of the tree $\Psi(\pi)$.
\end{thm}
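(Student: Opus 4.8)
We outline the strategy we would follow. By Theorems~\ref{Ent1} and~\ref{thm-pou1}, the sets $\A_{n,k}$ and $\mathcal{BT}_{n,k}$ both have cardinality $\ank$ for $1\le k\le n$; hence it suffices to build a bijection $\Psi\colon\An\to\Tn$ whose restriction to each $\A_{n,k}$ is a bijection onto $\mathcal{BT}_{n,k}$, for then $\textsc{First}(\pi)=k$ immediately gives $\Psi(\pi)\in\mathcal{BT}_{n,k}$, i.e.\ $\textsc{Leaf}(\Psi(\pi))=k$. The plan is to produce such a $\Psi$ by induction, making it intertwine Entringer's recurrence~\eqref{eq1} as realised on each of the two models.

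\emph{Step 1: the recurrence on permutations.} First we would write down an explicit bijection
\[
\beta_{n,k}\colon \A_{n,k}\ \xrightarrow{\ \sim\ }\ \A_{n,k-1}\ \sqcup\ \A_{n-1,n+1-k}\qquad(n\ge 2,\ k\ge 2),
\]
with the convention $\A_{n,1}=\emptyset$ for $n\ge2$. This step is elementary. Given $\pi=k\,\pi_2\cdots\pi_n\in\A_{n,k}$ we have $\pi_2\le k-1$, and we distinguish two cases. If $\pi_2<k-1$, we exchange the two values $k$ and $k-1$ in $\pi$; one checks that the outcome is again down-up, that it lies in $\A_{n,k-1}$, and that the map so obtained is a bijection onto $\A_{n,k-1}$. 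If $\pi_2=k-1$, we delete the first letter $k$, standardise the remaining word to a permutation of $[n-1]$, and take the complement $i\mapsto n-i$; this sends the up--down word $\pi_2\cdots\pi_n$ to a down-up permutation of $[n-1]$ with first letter $n-(k-1)=n+1-k$, and one checks bijectivity onto $\A_{n-1,n+1-k}$. (These maps are essentially those underlying the standard bijective proof of~\eqref{eq1}.)

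\emph{Step 2: the recurrence on trees (the main obstacle).} Next --- and this is the substantive part, as it is exactly the problem left open in~\cite{KPP94} --- we would construct the tree analogue, an explicit bijection
\[
\alpha_{n,k}\colon \mathcal{BT}_{n,k}\ \xrightarrow{\ \sim\ }\ \mathcal{BT}_{n,k-1}\ \sqcup\ \mathcal{BT}_{n-1,n+1-k}\qquad(n\ge 2,\ k\ge 2).
\]
The guiding idea is to track what happens to the minimal path $p(T)=(1=a_1,a_2,\dots,a_\ell=k)$ of $T\in\mathcal{BT}_{n,k}$ under a local surgery: one branch should realise an embedding $\mathcal{BT}_{n,k-1}\hookrightarrow\mathcal{BT}_{n,k}$ that reattaches the vertex $k$ so as to push the minimal path one further step (its former subtree being redistributed onto the previous endpoint of the path), while the complementary set of trees --- those for which this reattachment is blocked --- should be transported to $\mathcal{BT}_{n-1,n+1-k}$ by deleting a single vertex and applying a reflection to the forest hanging off the minimal path, the reflection accounting for the index change $k\mapsto n+1-k$. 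Pinning down the correct surgery, the correct ``blocked'' subset, and the correct reflection, and checking that the binary and increasing constraints are respected throughout, is where the real difficulty lies and is the heart of the argument.

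\emph{Step 3: assembling $\Psi$.} Granting $\beta_{n,k}$ and $\alpha_{n,k}$, we define $\Psi$ on $\A_{n,k}$ by induction on $n$ and, for fixed $n$, on $k$: let $\Psi$ send the unique element of $\A_{1,1}$ to the one-vertex tree, let $\Psi$ be the empty map on $\A_{n,1}$ for $n\ge2$, and for $n\ge2$, $k\ge2$ set
\[
\Psi\big|_{\A_{n,k}}\ :=\ \alpha_{n,k}^{-1}\circ\bigl(\Psi\big|_{\A_{n,k-1}}\sqcup\Psi\big|_{\A_{n-1,n+1-k}}\bigr)\circ\beta_{n,k}.
\]
This is legitimate because $(n,k-1)$ and $(n-1,n+1-k)$ both precede $(n,k)$ for the well-founded order ``decrease $n$, or keep $n$ and decrease $k$'', so the recursion terminates at the stated base cases. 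An easy induction then shows that each $\Psi\big|_{\A_{n,k}}$ is a bijection onto $\mathcal{BT}_{n,k}$, whence $\Psi:=\bigsqcup_{k}\Psi\big|_{\A_{n,k}}\colon\An\to\Tn$ is a bijection with $\textsc{First}(\pi)=\textsc{Leaf}(\Psi(\pi))$ for every $\pi$. (Alternatively one could guess a closed form for $\Psi$ outright, e.g.\ by correcting Donaghey's bijection~\cite{Don75}, and then verify $\Psi(\A_{n,k})=\mathcal{BT}_{n,k}$ by showing that $\Psi$ intertwines $\beta_{n,k}$ and $\alpha_{n,k}$; in either approach the crux is Step~2.)
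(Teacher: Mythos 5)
Your Step~2 is not a proof: you state that the heart of the matter is to construct an explicit bijection $\alpha_{n,k}\colon \Tnk\to\mathcal{BT}_{n,k-1}\sqcup\mathcal{BT}_{n-1,n+1-k}$ realising \eqref{eq1} directly on binary increasing trees, you acknowledge that this is exactly the open problem of \cite{KPP94}, and then you only describe in general terms what such a surgery ``should'' do (push the minimal path one step further, or delete a vertex and apply some reflection) without specifying the operation, the ``blocked'' subset, or the verification that the binary and increasing constraints survive. Steps~1 and~3 are routine (Step~1 is the standard bijective proof of Entringer's recurrence, and Step~3 is formal bookkeeping once $\alpha_{n,k}$ and $\beta_{n,k}$ exist), so the entire content of the theorem has been deferred to the one step you did not carry out. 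As it stands the proposal establishes nothing beyond the equality of cardinalities, which already follows from Theorems~\ref{Ent1} and~\ref{thm-pou1}.

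For comparison, the paper does not attack the tree recurrence head-on. It instead introduces an intermediate Entringer family $\ESnk$ of ``encoding sequences'', constructs a left-to-right coding $\psi\colon\Ank\to\ESnk$ by repeatedly applying adjacent transpositions and deletions to the permutation, and then a second map $\varphi\colon\ESnk\to\Tnk$ that reads the code backwards and grows the tree; the desired bijection is $\Psi=\varphi\circ\psi$, defined globally rather than by induction on $(n,k)$. The tree-level interpretation of \eqref{eq1} that your Step~2 asks for (the decomposition of $\Tnk$ according to whether the edge $(k-1,k)$ is ``removable'') is then obtained in the paper as a \emph{consequence} of $\varphi$, not as an ingredient. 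If you want to complete your argument along your own lines, you would essentially have to reconstruct that removable-edge surgery and prove directly that it is a bijection, which is comparable in difficulty to the paper's construction of $\varphi$.
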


Poupard \cite{Pou82,Pou97} gave also other interpretations for Entringer numbers $E_{n,k}$ (see Section~\ref{sec-poupard}) in binary increasing trees and down-up permutations with induction proofs.
Our second aim is to provide simple bijections between the other interpretations of Poupard in down-up permutations and the original interpretation in $\mathcal{DU}_{n,k}$.  Note that some other interpretations of Entringer numbers $E_{n,k}$ in the model of increasing trees were given in  \cite{KPP94}.
Recently, two new interpretations of Euler numbers were given by Martin and Wagner~\cite{MW09} in the model of G-words and R-words. We shall give the corresponding interpretations of
 the Entringer number $E_{n,k}$  in the later models.

The rest of this paper is organized as follows.
In Section~\ref{s-dnk}, we introduce an intermediate model $\ESnk$ and present a bijection $\psi$ between $\Ank$ and $\ESnk$.
In Section~\ref{s-bij}, we describe a bijection $\varphi$ between $\ESnk$ and $\Tnk$ so that $\Psi=\varphi\circ \psi$  provides the  bijection for Theorem~\ref{mainth}.
As an application, in Subsection~\ref{s-rmq}, we give a direct interpretation of \eqref{eq1} in the model of increasing trees. In Section~\ref{sec-poupard}, we recall the other interpretations of $E_{n,k}$ found by Poupard and establish simple bijections between these models. In Section~\ref{s-2new}, we give some new interpretations for $E_{n,k}$, first refining the results of Martin and Wagner \cite{MW09} in their model of G-words and R-words, and secondly introducing the new model of U-words.

\section{The left-to-right coding $\psi$ of down-up permutations} \label{s-dnk}
Consider down-up permutations   on  any finite subset  $I=\{a_1,a_2, \ldots,a_m\}_{<}$ of $\mathbb{N}$.
Two elements $a$ and $b$ in $I$ are said to be \emph{adjacent}  if there is no $c\in I$ between $a$ and $b$.
Let   $\pi$ be a down-up permutation on $I$, i.e., $\pi_1 > \pi_2 < \pi_3 > \pi_4 < \ldots$.
Suppose  $\pi_1=a_i$ and $\pi_2=a_j$ with $a_{i}>a_{j}$.   If $\pi_{1}$ and $\pi_{2}$ are adjacent, then, deleting $\pi_{1}\pi_{2}$,  we obtain 
again a down-up permutation on $I\setminus \{\pi_{1}, \pi_{2}\}$, otherwise, we can apply 
successively  the adjacent transpositions
$(a_{i}, a_{i-1})$, $(a_{i-1}, a_{i-2}),\ldots, (a_{j+2}, a_{j+1})$ to $\pi$ (from left-to-right):
\begin{align*}
\pi^{(1)}=(a_{i}, a_{i-1})\circ \pi,\quad
\pi^{(2)}=(a_{i-1}, a_{i-2})\circ \pi^{(1)},\quad \ldots,\quad
\pi^{(i-j-1)}=(a_{j+2}, a_{j+1})\circ \pi^{(i-j-2)},
\end{align*}
so that  all the  permutations $\pi^{(1)},\ldots, \pi^{(k-j-1)}$ are down-up permutations and 
 the first two elements in  $\pi^{(i-j-1)}$ are adjacent. Deleting the first two elements,   we get  again a
  down-up  permutation, say  $\pi^{(i-j)}$, on $I\setminus \{a_{j+1},a_{j}\}$. 
If we register  $(a,b)$ for the composition from left with the adjacent involution $(a,b)$, and  $(a,b)^{*}$ for the deletion of the first two letters $a$ and $b$, then 
the operations in the above process can be encoded by  the  word
  $$
 (a_{i}, a_{i-1})\, (a_{i-1}, a_{i-2})\, \ldots\, (a_{j+2}, a_{j+1})\, (a_{j+1}, a_{j})^{*}.
  $$
Since  the resulting permutation $\pi^{(i-j)}$ is still down-up, we can  
 iterate  this process until we obtain the empty permutation. Clearly the last deletion is $(n)^{*}$ if $n$ is odd.
 We shall call \emph{left-to-right code} the resulting  sequence of the successive operations in this process and denote it by
 $\psi(\pi)=(\Delta_{\ell})_{\ell}$, where  each entry $\Delta_{\ell}$ is either a transposition $(j,i)$, 
a  deletion $(j,i)^{*}, \;1 \leq i<j \leq n$,  or the deletion $(n)^{*}$.
Formally, we can write the algorithm as follows: \\
\begin{enumerate}
\item Start with $(\pi, \Delta=\emptyset)$ and support set $I=\{a_1,a_2, \ldots,a_m\}_{<}$
\item While $Card(A) \geq 2$, do:
\begin{enumerate}
\item While there is  $a\in I$ such that $\pi_1>a>\pi_2$, do: \begin{itemize}
\item[]$\Delta \leftarrow (\Delta,(\pi_{1},a'))$, where $a'=\max\{a\in I | \pi_1>a>\pi_2\}$,
\item[] $\pi \leftarrow (\pi_1, a') \circ \pi$.
\end{itemize}
\item If there is no $a\in I$ such that $\pi_1>a>\pi_2$, do:
\begin{itemize}
\item[] $\Delta \leftarrow (\Delta,(\pi_{1},\pi_2)^{*} )$,
\item[] $\pi \leftarrow \pi_3 \pi_4 \ldots \pi_n$ (eventually $\pi=\emptyset$),
\item[] $I\leftarrow I \setminus \{ \pi_{1},\pi_2\}$.
\end{itemize}
\end{enumerate}
\item If $Card(I)=1$ with $I=\{ a_m \}$, do:
\begin{itemize}
\item[]$D_\pi \leftarrow (\Delta,(a_m)^{*} )$,
\item[]$\pi \leftarrow \emptyset$,
\item[]$I \leftarrow \emptyset$.
\end{itemize}
\end{enumerate}

\begin{ex} \label{ex2.2} If $\pi = 7\,4\,8\,5\,9\,1\,6\,2\,3 \in \mathcal{DU}_{9,7}$, then the algorithm goes as follows:
\[ \begin{tabular}{c|r|l} Step & \multicolumn{1}{c|}{$\pi^{(\ell)}$} & $\Delta_\ell$  \\
\hline
0 & $7\,4\,8\,5\,9\,1\,6\,2\,3$ & $\emptyset$\\
1 & $6\,4\,8\,5\,9\,1\,7\,2\,3$ & $(7,6)$ \\
2 & $5\,4\,8\,6\,9\,1\,7\,2\,3$ & $(6,5)$ \\
3 & $  8\,6\,9\,1\,7\,2\,3$ & $(5,4)^{*} $ \\
\hline
4 & $  7\,6\,9\,1\,8\,2\,3$ & $(8,7)$ \\
5 & $    9\,1\,8\,2\,3$ & $(7,6)^{*} $ \\
\hline
6 & $    8\,1\,9\,2\,3$ & $(9,8)$ \\
7 & $    3\,1\,9\,2\,8$ & $(8,3)$ \\
8 & $    2\,1\,9\,3\,8$ & $(3,2)$ \\
9 & $      9\,3\,8$ & $(2,1)^{*} $ \\
\hline
10 & $      8\,3\,9$ & $(9,8)$ \\
11 & $        9$ & $(8,3)^{*} $ \\
\hline
12 & $         \emptyset$ & $(9)^{*}$
 \end{tabular}. \]
Thus, the left-to-right code of $\pi$ is
$$
 \psi(\pi) = (7,6)\,(6,5)\,(5,4)^{*} \,(8,7)\,(7,6)^{*}\,(9,8)\,(8,3)\,(3,2)\,(2,1)^{*}\,(9,8)\,(8,3)^{*} \,(9)^{*}.
$$
\end{ex}

A  \emph{domino} on $[n]$  is an ordered pairs $(j,i)$ ($1 \leq i < j \leq n$) and a \emph{starred domino} on $[n]$ is a
starred ordered pairs $(j,i)^{*}$ ($1 \leq i < j \leq n$) or $(n)^{*}=(n,n)^{*}$.  Let $\mathbb{A}_{n}$ be the alphabet consisting of dominos (starred or non) on $[n]$.

\begin{defn} \label{def:enc} A word $\Delta=\Delta_{1}\ldots \Delta_r$ on $\mathbb{A}_{n}$ is an \emph{encoding sequence of $[n]$} if the following conditions are verified:
\begin{itemize}
\item[(i)] the  entries of  starred dominos are disjoint and their union equals $[n]$,
\item[(ii)] if $\Delta_\ell=(j,i)^{*}$, then the next domino (if there is one) starts with an entry $>i$, and no entry of a later domino
 lies between $i$ and $j$,
\item[(iii)] if $\Delta_\ell=(j,i)$, then both $i$ and $j$ appear in a later domino, with $i$ the first entry of the next domino, and each integer between $i$ and $j$ appears in an earlier starred domino.
\end{itemize}
 \end{defn}
\begin{rmk}
It is clear from the definition that $(n, i)^{*}$ ($1\leq i \leq n$) can only take the last position in an encoding sequence and 
an encoding sequence must start with $(k,k-1)$ or $(k,k-1)^{*}$  for $2\leq k\leq n$.
\end{rmk}

We denote by $\Dn$ the set of encoding sequences of $[n]$, and by $\ESnk$ the subset of $\ESn$ consisting of encoding sequences starting with 
$(k,k-1)$ or $(k,k-1)^{*}$, $2\leq k\leq n$.
For example, the set  $\mathcal{ES}_4$  is the union of the three subsets:
\begin{align*}
\mathcal{ES}_{4,2}&=\left\{ (2,1)^{*}\, (4,3)^{*}\right\},\\
\mathcal{ES}_{4,3}&=\left\{(3,2)^{*}\, (4,1)^{*},\;  (3,2)\, (2,1)^{*}\, (4,3)^{*} \right\},\\
\mathcal{ES}_{4,4}&= \left\{(4,3)\, (3,2)^{*}\, (4,1)^{*}, \; (4,3)\,(3,2)\, (2,1)^{*}\,(4,3)^{*} \right\}.
\end{align*}
\begin{thm}
For all $n \geq 1$ and $k \in [n]$, the  mapping $\psi: \Ank\to \ESnk$ is a bijection.
Therefore, the sequence $(\ESnk)_{1 \leq k \leq n}$ is an Entringer family.
\end{thm}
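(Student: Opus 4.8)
The plan is to show that $\psi$ is a well-defined map into $\ESnk$ and then construct an explicit inverse. Everything else follows, since once $\psi$ is a bijection, $|\ESnk| = |\Ank| = E_{n,k}$ by definition of the Entringer numbers, so $(\ESnk)_{1\le k\le n}$ is an Entringer family.

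\textbf{Step 1: $\psi$ lands in $\ESnk$.} First I would check, by induction on the number of passes through the outer \texttt{while} loop, that at each stage the current permutation $\pi$ is a genuine down-up permutation on the current support set $I$. The inner loop applies adjacent transpositions $(\pi_1, a')$ which, since $a'$ is chosen as the largest element of $I$ strictly between $\pi_2$ and $\pi_1$, strictly decreases $\pi_1$ while keeping $\pi_1 > \pi_2 < \pi_3$; so the down-up property is preserved and the loop terminates (the number of elements of $I$ between $\pi_1$ and $\pi_2$ strictly decreases). When the inner loop exits, $\pi_1$ and $\pi_2$ are adjacent in $I$, so deleting them leaves a down-up permutation on $I \setminus \{\pi_1,\pi_2\}$. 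Then I would verify conditions (i)--(iii) of Definition~\ref{def:enc} directly from the algorithm: (i) holds because each element of $[n]$ is removed from $I$ exactly once, inside exactly one starred domino (or the final $(n)^*$); (ii) holds because after a deletion $(j,i)^*$ the new first element $\pi_1$ of the remaining permutation satisfies $\pi_1 > i$ (it is the old $\pi_3$, which exceeded the old $\pi_2 = i$), and no later element can fall strictly between $i$ and $j$ since $i,j$ were adjacent in $I$ at the time of deletion and only smaller sets occur afterwards; (iii) holds because a non-starred domino $(j,i)$ records a transposition, so $j$ becomes $\pi_2$ (hence reappears) and $i$, the former $\pi_2$, becomes $\pi_1$ after the swap, i.e. is the first entry of the next recorded domino, while every integer strictly between $i$ and $j$ was already removed by an earlier inner-loop pass (these are exactly the $a'$'s used so far in the current block) — wait, more carefully, those intermediate integers were removed in \emph{earlier} passes of the \emph{outer} loop as starred dominos, which is what (iii) requires. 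Finally $\psi(\pi)$ starts with a domino beginning at $\pi_1 = k$ paired with $k-1$ (either a transposition, if $\pi_2 < k-1$, or a starred deletion, if $\pi_2 = k-1$), so $\psi(\pi) \in \ESnk$.

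\textbf{Step 2: construct $\psi^{-1}$.} Given $\Delta = \Delta_1 \cdots \Delta_r \in \ESnk$, I would read it from \emph{right to left} to rebuild the permutation. Maintain a down-up permutation $\sigma$ on a growing subset; initialize with the empty permutation (or with $a_m$ if the last domino is $(n)^*$). Processing a starred domino $(j,i)^*$ means prepending $j\,i$ to the front of the current $\sigma$; processing a non-starred domino $(j,i)$ means applying the adjacent transposition $(j,i)$ to $\sigma$ (which, by condition (iii), swaps the current first element $i$ with $j$, moving $j$ into second position). Conditions (i)--(iii) are exactly what is needed to guarantee that each step produces a valid down-up permutation and that the process is reversible; after processing $\Delta_1$ we obtain a down-up permutation on $[n]$ whose first element is $k$. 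One then checks $\psi \circ \psi^{-1} = \mathrm{id}$ and $\psi^{-1}\circ\psi = \mathrm{id}$ by observing that the ``prepend/swap'' operations undo precisely the ``delete/swap'' operations of the algorithm, block by block.

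\textbf{Main obstacle.} The routine bookkeeping (termination, down-up preservation) is easy; the genuinely delicate part is verifying that conditions (i)--(iii) of Definition~\ref{def:enc} are not only \emph{necessary} (Step 1) but \emph{sufficient} to reconstruct $\pi$ uniquely (Step 2) — i.e. that $\psi^{-1}$ is well-defined on all of $\ESnk$ and never gets stuck or produces something non-down-up. The crux is condition (iii)'s clause that every integer strictly between $i$ and $j$ already appears in an \emph{earlier} starred domino: when reading right-to-left and applying the transposition $(j,i)$, this guarantees that $j$ is currently absent from $\sigma$ while $i$ is the current first element, so the swap makes sense and keeps the permutation down-up. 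I would prove sufficiency by induction on $r$, peeling off $\Delta_1$: the tail $\Delta_2\cdots\Delta_r$ restricted appropriately is again an encoding sequence of a smaller set, so the inductive hypothesis applies, and condition (ii) ensures the prepend/swap dictated by $\Delta_1$ is legal. Matching up the two inductions (the algorithmic one defining $\psi$ and the structural one on encoding sequences) is where the argument requires the most care.
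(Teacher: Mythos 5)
Your proposal follows essentially the same route as the paper: check that the algorithm preserves the down-up property and that the output satisfies conditions (i)--(iii) of Definition~\ref{def:enc}, then build $\psi^{-1}$ by reading the encoding sequence right-to-left, prepending the pair for each starred domino and applying the adjacent transposition for each unstarred one. The only slip is the parenthetical claim in your verification of (iii) that after the swap $(j,i)$ the element $j$ ``becomes $\pi_2$'' --- it actually lands at the position previously occupied by $i$, which is at least the third --- but the conclusion you need (that $j$ reappears in a later domino) still holds since $j$ remains in the support set and is eventually deleted.
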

\begin{proof}
Let $\pi=\pi_{1}\ldots \pi_{n}$ be an element in $ \Ank$. Then $\pi_{1}=k$, so the first letter of $\psi(\pi)$ is $(k,i)$ or $(k,i)^{*}$ ($1\leq i<k$) by  definition of $\psi$.
It remains  to show that the word $\psi(\pi)$ verifies the conditions (i)-(iii) of Definition~\ref{def:enc}.
Since the process reduces the permutation $\pi$ to  empty permutation,  the condition  (i) is verified.
\begin{itemize}
\item If  $\Delta_\ell=(j,i)^{*} $, as $i$ and $j$ are adjacent in the support set of $\pi^{(\ell)}$, the integers between $i$ and $j$ have been removed in previous starred 
dominos, also the first entry of the next domino is $>i$ because $\pi^{(\ell)}$ is down-up.
\item If $\Delta_\ell=(j,i)$,  as $i$ and $j$ are adjacent in the support set of $\pi^{(\ell)}$, the integers between $i$ and $j$ have been removed in previous starred 
dominos, also  the next domino  must be $(i,m)$ or $(i,m)^{*}$ with $i>m$ because $i$ is the first entry of $\pi^{(\ell)}$.
\end{itemize}
It results  that $\psi(\pi) \in \ESnk$. 

Conversely, starting from an encoding sequence $\Delta=\Delta_1\ldots\Delta_\ell \in \mathcal{ES}_{n,k}$, we construct by induction 
$\pi^{(j)}$ such that $\textsc{First}(\pi^{(j)})$  equals the first entry of $\Delta_j$ for $j=\ell, \ell-1, \ldots, 1$.

First, if $\Delta_{\ell}=(n)^{*}$ then define 
$\pi^{(\ell)}=n$, if $\Delta_{\ell}=(n,i)^{*}$ with $i<n$, then 
define $\pi^{(\ell)}=n\,i$.

Assume  that $\pi^{(j+1)}$ is constructed with $\text{First}(\pi^{(j+1)})=k_{j+1}$. By definition of  $\Delta$, there are two cases:
\begin{itemize}
\item[(i)]
 if $\Delta_j=(k_j,k_{j+1})$, where $k_j$ and  
 $k_{j+1}$ are adjacent in the support set of $\pi^{(j+1)}$, then
 define $\pi^{(j)}:=(k_j,k_{j+1}) \circ \pi^{(j+1)}$. This permutation is still down-up and the first element of $\pi^{(j)}$ is $k_j$;
 \item[(ii)]
 if $\Delta_j=(a_j,b_j)^*$, where  $a_j > b_j < k_{j+1}$, and $a_j$, $b_j$  are not in the support set of $\pi^{(j+1)}$, then 
define $\pi^{(j)}$ as the word $a_j b_j \pi^{(j+1)}$. Since $a_j > b_j < k_{j+1}$, the permutation $\pi^{(j)}$ is down-up with $a_{j}$ as the  first element.
\end{itemize}
Let $\psi^{-1}(\Delta) :=\pi^{(1)}$, which is an element in $ \mathcal{DU}_{n,k}$.
\end{proof}

\begin{rmk} Denote
the largest integer less than $x$ by $\displaystyle \left\lfloor x \right\rfloor$ and  the number of ordered pairs $(i,j) \in \{1,\ldots,n\}$ such that $i+1<j$ and $\pi_i > \pi_{i+1} < \pi_j < \pi_i$ by
 $(\312)\pi$. Then, one can show that the length of the sequence $\psi(\pi)$ is equal to
\[ (\312)\pi + \left\lfloor \frac{n+1}{2} \right\rfloor. \]
Indeed, $(\312)\pi$ corresponds to the number of occurences of terms $(j,i)$, $j> i$, in $\psi(\pi)$, and there are $\displaystyle \left\lfloor\frac{n+1}{2}\right\rfloor$ occurences of terms $(j,i)^{*} $, $j>i$, in $\psi(\pi)$. Note that various formulae for counting $\312$-patterns in down-up
permutations are given in \cite{Che08, JV09,SZ10}.
\end{rmk}

\begin{prop} \label{s-rmq2}
Let $n \geq 2$ and $k \geq 2$. The number of elements starting with $(k,k-1)$ equals $E_{n,k-1}$, and the number of elements starting with $(k,k-1)^{*}$ equals $E_{n-1, n+1-k}$.
\end{prop}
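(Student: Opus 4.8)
The plan is to establish the first assertion by an explicit bijection on encoding sequences, and then to deduce the second one from Entringer's recurrence \eqref{eq1} together with the fact, just proved, that $(\ESnk)_{1\le k\le n}$ is an Entringer family, so that $|\ESnk|=E_{n,k}$. By the very definition of $\ESnk$, this set is the disjoint union of $A=\set{\Delta\in\ESnk : \Delta \text{ begins with } (k,k-1)}$ and $B=\set{\Delta\in\ESnk : \Delta \text{ begins with } (k,k-1)^{*}}$. Hence it suffices to prove $|A|=E_{n,k-1}$; this then forces $|B|=|\ESnk|-|A|=E_{n,k}-E_{n,k-1}$, which equals $E_{n-1,n+1-k}$ by \eqref{eq1}.

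To compute $|A|$ I would show that deletion of the first letter, $\Delta=\Delta_1\Delta_2\cdots\Delta_r\mapsto\Delta_2\cdots\Delta_r$, is a bijection from $A$ onto $\mathcal{ES}_{n,k-1}$ whose inverse prepends the domino $(k,k-1)$. For the forward map: if $\Delta_1=(k,k-1)$, then, since $(k,k-1)$ is non-starred, deleting it removes no starred domino and, for each remaining domino, only shrinks the set of dominos occurring later in the sequence; hence conditions (i)--(iii) of Definition~\ref{def:enc} persist and $\Delta_2\cdots\Delta_r\in\ESn$. By the remark following Definition~\ref{def:enc}, $\Delta_2\cdots\Delta_r$ then begins with $(\ell,\ell-1)$ or $(\ell,\ell-1)^{*}$ for some $\ell$, and condition (iii) applied to $\Delta_1=(k,k-1)$ in $\Delta$ forces the first entry of $\Delta_2$ to be $k-1$, so $\ell=k-1$ and $\Delta_2\cdots\Delta_r\in\mathcal{ES}_{n,k-1}$. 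For the inverse map: given $\Delta'\in\mathcal{ES}_{n,k-1}$, one verifies that $(k,k-1)\Delta'$ satisfies (i)--(iii): the starred dominos, their pairwise disjointness and their union $[n]$ are unchanged; prepending a domino creates no new later domino, so (ii) is inherited; for the dominos of $\Delta'$ the relevant earlier \emph{starred} dominos are unchanged, so (iii) is inherited; and for the new initial domino $(k,k-1)$ one has that $k-1$ and $k$ both occur among the starred dominos of $\Delta'$, that $k-1$ is the first entry of the next domino (namely $(k-1,k-2)$ or $(k-1,k-2)^{*}$), and that no integer lies strictly between $k-1$ and $k$. So $(k,k-1)\Delta'\in A$, the two maps are clearly mutually inverse, and therefore $|A|=|\mathcal{ES}_{n,k-1}|=E_{n,k-1}$.

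The only part that really needs care is this last verification, that all three conditions of Definition~\ref{def:enc} remain valid under prepending, respectively deleting, the non-starred front domino $(k,k-1)$; the rest is formal bookkeeping. I would also dispose of the boundary case $k=2$ separately, where $\mathcal{ES}_{n,1}$ does not literally occur: no encoding sequence of $[n]$ can begin with $(2,1)$ since no domino has first entry $1$, so $|A|=0=E_{n,1}$, and $|B|=E_{n-1,n-1}$ is then produced correctly by \eqref{eq1}.
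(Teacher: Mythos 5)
Your treatment of the sequences beginning with $(k,k-1)$ is essentially the paper's argument (delete/prepend the first letter to get a bijection with $\mathcal{ES}_{n,k-1}$), only carried out in more detail; that part is fine, including the boundary case $k=2$. Where you genuinely diverge is the second count: the paper identifies the tail $\Delta_2\Delta_3\cdots$ of a sequence beginning with $(k,k-1)^{*}$ directly as an encoding sequence on the $n-2$ remaining elements whose initial value exceeds the $k-2$ smallest of them, obtaining $|B|=E_{n-2,k-1}+E_{n-2,k}+\cdots+E_{n-2,n-2}=E_{n-1,n+1-k}$, whereas you compute $|B|$ by subtraction, as $|\ESnk|-|A|=E_{n,k}-E_{n,k-1}$, and then invoke Entringer's recurrence \eqref{eq1}. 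Your route is shorter and logically sound as a proof of the proposition in isolation, since \eqref{eq1} is Theorem~\ref{Ent1} and is established independently of this section. But it buys less: the sentence immediately following the proposition asserts that Entringer's formula \eqref{eq1} \emph{results from} the proposition via the decomposition $\ESnk=A\sqcup B$, and that deduction becomes circular if the value of $|B|$ was itself obtained by rearranging \eqref{eq1}. The paper's direct description of $B$ is what makes the proposition a combinatorial interpretation of the recurrence in the model of encoding sequences; if you want your proof to serve that purpose, you should replace the subtraction step by an explicit analysis of the tail of a sequence beginning with $(k,k-1)^{*}$ (or at least by the weaker identity $\sum_{j\ge k-1}E_{n-2,j}=E_{n-1,n+1-k}$ applied to an explicit decomposition of $B$).
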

\begin{proof} Let $\Delta \in \ESnk$.
If $\Delta_1=(k,k-1)$, the remaining sequence $(\Delta_2,\Delta_3,\ldots)$ is still an encoding sequence of $[n]$, starting with $\Delta_2 \in \{ (k-1,i), (k-1,i), 1 \leq i \leq k-2 \}$. Thus, there are $E_{n,k-1}$ encoding sequences starting by $(k,k-1)$.
If $\Delta_1=(k,k-1)^*$,  the remaining sequence $(\Delta_2,\Delta_3,\ldots)$ doesn't contain the elements $k$ and $k+1$ and starts with an element in $\{ (i,j), (i,j)^*, 1 \leq j \leq i-1 \}$ with $i \geq k+1$. In other words, this is an encoding sequence of $n-2$ elements, starting with an integer $i$ that must be greater than the $k-2$ first elements. Thus, there are $E_{n-2,k-1}+E_{n-2,k}+\cdots + E_{n-2,n-2} = E_{n-1,n+1-k}$ encoding sequences starting by $(k,k-1)^*$.
\end{proof}

Since any sequence in $\ESnk$ begins  with either $(k,k-1)$ or $(k,k-1)^*$ ($2\leq k\leq n$), 
 Entringer's formula~\eqref{eq1} results from  the above proposition.

\section{The left-to-right coding of binary  trees} \label{s-bij}
\subsection{The bijection $\varphi : \ESnk \rightarrow \Tnk$}
Starting from an encoding sequence $\Delta=\Delta_{1}\ldots \Delta_{\ell} \in \ESnk$, we construct a
 tree $T=\varphi(\Delta) \in \Tnk$ by reading the sequence $\Delta$ in reverse order, i.e., from right to left.
  More precisely,   for $m=\ell, \ell-1, \ldots, 1$, we shall construct a tree $T_{m}$
  corresponding to 
   the word $\Delta_{m}\ldots \Delta_{\ell-1}\Delta_{\ell}$
   such that  \begin{align}\label{propm}
\textrm{$\Delta_m = (j_m,i_m)$ or $(j_m,i_m)^{*}$}\Longrightarrow  \textsc{Leaf}(T_m)=j_m,
 \end{align}
and define  $T=T_{1}:=\varphi(\Delta)$.  The algorithm goes as follows:

 If $\Delta_{\ell}=(n)^{*} $,  construct the tree $T_{\ell}$ with  only one vertex $n$; if $\Delta_{\ell}=(n,i)^{*} $,
construct  the increasing tree $T_{\ell}$ with only  one edge $i\to n$. Clearly \eqref{propm}  is verified.

Assume  that we have constructed such a  tree $T_{m+1}$ corresponding to the word 
 $\Delta_{m+1}\,\ldots\,\Delta_{\ell}$.

\begin{itemize}
\item[(i)] If  $\Delta_{m}=(j_m,i_m)^{*}$, we add vertices $i_m$ and $j_m$ to the tree $T_{m+1}$ to obtain $T_{m}$. Suppose  that the minimal path of $T_{m+1}$ is $(a_1,\ldots,a_{p_m})$.
\begin{itemize}
\item[$\bullet$] If $i_m < a_1$,  add  the  edges $(i_m,a_1)$ and  $(i_m,j_m)$ to the tree $T_{m+1}$.  Then, the tree $T_{m}$
 is an increasing tree rooted at $i_m$ with  $(i_m,j_m)$ as the minimal path.
    $$\qquad 
    \centering
\begin{pgfpicture}{42.05mm}{27.22mm}{119.94mm}{67.08mm}
\pgfsetxvec{\pgfpoint{0.80mm}{0mm}}
\pgfsetyvec{\pgfpoint{0mm}{0.80mm}}
\color[rgb]{0,0,0}\pgfsetlinewidth{0.30mm}\pgfsetdash{}{0mm}
\pgfsetlinewidth{1.20mm}\pgfmoveto{\pgfxy(94.38,65.61)}\pgflineto{\pgfxy(105.38,65.61)}\pgfstroke
\pgfmoveto{\pgfxy(105.38,65.61)}\pgflineto{\pgfxy(102.57,66.31)}\pgflineto{\pgfxy(102.57,64.91)}\pgflineto{\pgfxy(105.38,65.61)}\pgfclosepath\pgffill
\pgfmoveto{\pgfxy(105.38,65.61)}\pgflineto{\pgfxy(102.57,66.31)}\pgflineto{\pgfxy(102.57,64.91)}\pgflineto{\pgfxy(105.38,65.61)}\pgfclosepath\pgfstroke
\pgfcircle[fill]{\pgfxy(110.96,49.57)}{0.80mm}
\pgfsetlinewidth{0.30mm}\pgfcircle[stroke]{\pgfxy(110.96,49.57)}{0.80mm}
\pgfputat{\pgfxy(107.78,48.21)}{\pgfbox[bottom,left]{\fontsize{12.12}{14.54}\selectfont \makebox[0pt][r]{$j_m$}}}
\pgfputat{\pgfxy(107.66,37.47)}{\pgfbox[bottom,left]{\fontsize{10.02}{12.02}\selectfont \makebox[0pt][r]{$i_m$}}}
\pgfcircle[fill]{\pgfxy(110.96,38.57)}{0.80mm}
\pgfcircle[stroke]{\pgfxy(110.96,38.57)}{0.80mm}
\pgfmoveto{\pgfxy(121.96,49.57)}\pgflineto{\pgfxy(110.96,38.57)}\pgfstroke
\pgfmoveto{\pgfxy(110.96,49.57)}\pgflineto{\pgfxy(110.96,38.57)}\pgfstroke
\pgfcircle[fill]{\pgfxy(66.88,49.11)}{0.80mm}
\pgfcircle[stroke]{\pgfxy(66.88,49.11)}{0.80mm}
\pgfmoveto{\pgfxy(66.88,49.11)}\pgflineto{\pgfxy(77.88,60.11)}\pgfstroke
\pgfmoveto{\pgfxy(66.88,49.11)}\pgflineto{\pgfxy(66.88,60.11)}\pgfstroke
\pgfcircle[fill]{\pgfxy(66.88,60.11)}{0.80mm}
\pgfcircle[stroke]{\pgfxy(66.88,60.11)}{0.80mm}
\pgfputat{\pgfxy(65.64,56.92)}{\pgfbox[bottom,left]{\fontsize{10.02}{12.02}\selectfont \makebox[0pt][r]{$a_{2}$}}}
\pgfputat{\pgfxy(64.68,46.91)}{\pgfbox[bottom,left]{\fontsize{10.02}{12.02}\selectfont \makebox[0pt][r]{$a_1$}}}
\pgfcircle[fill]{\pgfxy(77.88,60.11)}{0.80mm}
\pgfcircle[stroke]{\pgfxy(77.88,60.11)}{0.80mm}
\pgfellipse[stroke]{\pgfxy(67.09,70.45)}{\pgfxy(4.87,0.00)}{\pgfxy(0.00,10.60)}
\pgfputat{\pgfxy(65.29,69.97)}{\pgfbox[bottom,left]{\fontsize{12.52}{15.02}\selectfont $A$}}
\pgfellipse[stroke]{\pgfxy(84.98,60.67)}{\pgfxy(7.54,0.00)}{\pgfxy(0.00,4.08)}
\pgfputat{\pgfxy(83.13,59.21)}{\pgfbox[bottom,left]{\fontsize{12.52}{15.02}\selectfont $B$}}
\pgfcircle[fill]{\pgfxy(121.78,49.41)}{0.80mm}
\pgfcircle[stroke]{\pgfxy(121.78,49.41)}{0.80mm}
\pgfmoveto{\pgfxy(121.78,49.41)}\pgflineto{\pgfxy(132.78,60.41)}\pgfstroke
\pgfmoveto{\pgfxy(121.78,49.41)}\pgflineto{\pgfxy(121.78,60.41)}\pgfstroke
\pgfcircle[fill]{\pgfxy(121.78,60.41)}{0.80mm}
\pgfcircle[stroke]{\pgfxy(121.78,60.41)}{0.80mm}
\pgfputat{\pgfxy(120.55,57.23)}{\pgfbox[bottom,left]{\fontsize{10.02}{12.02}\selectfont \makebox[0pt][r]{$a_{2}$}}}
\pgfputat{\pgfxy(123.74,46.75)}{\pgfbox[bottom,left]{\fontsize{10.02}{12.02}\selectfont $a_1$}}
\pgfcircle[fill]{\pgfxy(132.78,60.41)}{0.80mm}
\pgfcircle[stroke]{\pgfxy(132.78,60.41)}{0.80mm}
\pgfellipse[stroke]{\pgfxy(121.99,70.75)}{\pgfxy(4.87,0.00)}{\pgfxy(0.00,10.60)}
\pgfputat{\pgfxy(120.20,70.28)}{\pgfbox[bottom,left]{\fontsize{12.52}{15.02}\selectfont $A$}}
\pgfellipse[stroke]{\pgfxy(139.89,60.98)}{\pgfxy(7.54,0.00)}{\pgfxy(0.00,4.08)}
\pgfputat{\pgfxy(138.04,59.51)}{\pgfbox[bottom,left]{\fontsize{12.52}{15.02}\selectfont $B$}}
\end{pgfpicture}%
$$
\item[$\bullet$] If $i_m>a_1$,  by induction hypothesis and property (ii) of encoding sequences,  we see  that $a_1 < m$. Hence, there exists $k \in \{1,\ldots,p_m-1\}$ such that $a_k < i_m < a_{k+1}$. Then, erase the edge $(a_k,a_{k+1})$, create the edges $(a_k,i_m)$, $(i_m,a_{k+1})$ and $(i_m,j_m)$.
Clearly, the tree $T_{m}$ is  an increasing tree  with  $(i_{m}, j_m)$ as the last edge  of the minimal path.
$$
\centering
\begin{pgfpicture}{38.00mm}{29.20mm}{122.00mm}{73.40mm}
\pgfsetxvec{\pgfpoint{0.80mm}{0mm}}
\pgfsetyvec{\pgfpoint{0mm}{0.80mm}}
\color[rgb]{0,0,0}\pgfsetlinewidth{0.30mm}\pgfsetdash{}{0mm}
\pgfsetlinewidth{1.20mm}\pgfmoveto{\pgfxy(95.00,65.00)}\pgflineto{\pgfxy(105.00,65.00)}\pgfstroke
\pgfmoveto{\pgfxy(105.00,65.00)}\pgflineto{\pgfxy(102.20,65.70)}\pgflineto{\pgfxy(102.20,64.30)}\pgflineto{\pgfxy(105.00,65.00)}\pgfclosepath\pgffill
\pgfmoveto{\pgfxy(105.00,65.00)}\pgflineto{\pgfxy(102.20,65.70)}\pgflineto{\pgfxy(102.20,64.30)}\pgflineto{\pgfxy(105.00,65.00)}\pgfclosepath\pgfstroke
\pgfsetdash{{0.30mm}{0.50mm}}{0mm}\pgfsetlinewidth{0.30mm}\pgfmoveto{\pgfxy(120.00,50.00)}\pgflineto{\pgfxy(120.00,40.00)}\pgfstroke
\pgfcircle[fill]{\pgfxy(120.00,50.00)}{0.80mm}
\pgfsetdash{}{0mm}\pgfcircle[stroke]{\pgfxy(120.00,50.00)}{0.80mm}
\pgfcircle[fill]{\pgfxy(120.00,40.00)}{0.80mm}
\pgfcircle[stroke]{\pgfxy(120.00,40.00)}{0.80mm}
\pgfmoveto{\pgfxy(120.00,50.00)}\pgflineto{\pgfxy(130.00,60.00)}\pgfstroke
\pgfcircle[fill]{\pgfxy(120.00,70.00)}{0.80mm}
\pgfcircle[stroke]{\pgfxy(120.00,70.00)}{0.80mm}
\pgfmoveto{\pgfxy(120.00,40.00)}\pgflineto{\pgfxy(130.00,50.00)}\pgfstroke
\pgfcircle[fill]{\pgfxy(130.00,50.00)}{0.80mm}
\pgfcircle[stroke]{\pgfxy(130.00,50.00)}{0.80mm}
\pgfputat{\pgfxy(117.00,69.00)}{\pgfbox[bottom,left]{\fontsize{9.10}{10.93}\selectfont \makebox[0pt][r]{$j_m$}}}
\pgfputat{\pgfxy(117.00,59.00)}{\pgfbox[bottom,left]{\fontsize{9.10}{10.93}\selectfont \makebox[0pt][r]{$i_m$}}}
\pgfcircle[fill]{\pgfxy(120.00,60.00)}{0.80mm}
\pgfcircle[stroke]{\pgfxy(120.00,60.00)}{0.80mm}
\pgfmoveto{\pgfxy(120.00,50.00)}\pgflineto{\pgfxy(120.00,60.00)}\pgfstroke
\pgfmoveto{\pgfxy(130.00,70.00)}\pgflineto{\pgfxy(120.00,60.00)}\pgfstroke
\pgfmoveto{\pgfxy(120.00,70.00)}\pgflineto{\pgfxy(120.00,60.00)}\pgfstroke
\pgfcircle[fill]{\pgfxy(130.00,70.00)}{0.80mm}
\pgfcircle[stroke]{\pgfxy(130.00,70.00)}{0.80mm}
\pgfcircle[fill]{\pgfxy(130.00,60.00)}{0.80mm}
\pgfcircle[stroke]{\pgfxy(130.00,60.00)}{0.80mm}
\pgfsetdash{{0.30mm}{0.50mm}}{0mm}\pgfmoveto{\pgfxy(70.00,50.00)}\pgflineto{\pgfxy(70.00,40.00)}\pgfstroke
\pgfcircle[fill]{\pgfxy(70.00,50.00)}{0.80mm}
\pgfsetdash{}{0mm}\pgfcircle[stroke]{\pgfxy(70.00,50.00)}{0.80mm}
\pgfcircle[fill]{\pgfxy(70.00,40.00)}{0.80mm}
\pgfcircle[stroke]{\pgfxy(70.00,40.00)}{0.80mm}
\pgfmoveto{\pgfxy(70.00,50.00)}\pgflineto{\pgfxy(80.00,60.00)}\pgfstroke
\pgfmoveto{\pgfxy(70.00,40.00)}\pgflineto{\pgfxy(80.00,50.00)}\pgfstroke
\pgfcircle[fill]{\pgfxy(80.00,50.00)}{0.80mm}
\pgfcircle[stroke]{\pgfxy(80.00,50.00)}{0.80mm}
\pgfmoveto{\pgfxy(70.00,50.00)}\pgflineto{\pgfxy(70.00,60.00)}\pgfstroke
\pgfcircle[fill]{\pgfxy(70.00,60.00)}{0.80mm}
\pgfcircle[stroke]{\pgfxy(70.00,60.00)}{0.80mm}
\pgfputat{\pgfxy(68.88,57.11)}{\pgfbox[bottom,left]{\fontsize{9.10}{10.93}\selectfont \makebox[0pt][r]{$a_{k+1}$}}}
\pgfputat{\pgfxy(68.00,48.00)}{\pgfbox[bottom,left]{\fontsize{9.10}{10.93}\selectfont \makebox[0pt][r]{$a_k$}}}
\pgfcircle[fill]{\pgfxy(80.00,60.00)}{0.80mm}
\pgfcircle[stroke]{\pgfxy(80.00,60.00)}{0.80mm}
\pgfputat{\pgfxy(118.20,48.56)}{\pgfbox[bottom,left]{\fontsize{9.10}{10.93}\selectfont \makebox[0pt][r]{$a_k$}}}
\pgfputat{\pgfxy(131.38,66.61)}{\pgfbox[bottom,left]{\fontsize{9.10}{10.93}\selectfont $a_{k+1}$}}
\pgfellipse[stroke]{\pgfxy(70.19,69.40)}{\pgfxy(4.43,0.00)}{\pgfxy(0.00,9.64)}
\pgfellipse[stroke]{\pgfxy(130.18,79.61)}{\pgfxy(4.43,0.00)}{\pgfxy(0.00,9.64)}
\pgfputat{\pgfxy(68.56,68.97)}{\pgfbox[bottom,left]{\fontsize{11.38}{13.66}\selectfont $A$}}
\pgfputat{\pgfxy(128.26,78.34)}{\pgfbox[bottom,left]{\fontsize{11.38}{13.66}\selectfont $A$}}
\pgfellipse[stroke]{\pgfxy(86.46,60.51)}{\pgfxy(6.85,0.00)}{\pgfxy(0.00,3.71)}
\pgfellipse[stroke]{\pgfxy(86.60,50.16)}{\pgfxy(6.85,0.00)}{\pgfxy(0.00,3.71)}
\pgfellipse[stroke]{\pgfxy(136.65,49.88)}{\pgfxy(6.85,0.00)}{\pgfxy(0.00,3.71)}
\pgfellipse[stroke]{\pgfxy(136.65,59.67)}{\pgfxy(6.85,0.00)}{\pgfxy(0.00,3.71)}
\pgfputat{\pgfxy(84.78,59.18)}{\pgfbox[bottom,left]{\fontsize{11.38}{13.66}\selectfont $B$}}
\pgfputat{\pgfxy(134.84,58.20)}{\pgfbox[bottom,left]{\fontsize{11.38}{13.66}\selectfont $B$}}
\pgfputat{\pgfxy(84.78,48.56)}{\pgfbox[bottom,left]{\fontsize{11.38}{13.66}\selectfont C}}
\pgfputat{\pgfxy(134.84,48.28)}{\pgfbox[bottom,left]{\fontsize{11.38}{13.66}\selectfont C}}
\end{pgfpicture}%
$$
\end{itemize}
\item[(ii)] If  $\Delta_{m}=(j_m,i_m)$, where $i_{m}$ and $j_{m}$ are not siblings in $T_{m+1}$, by induction hypothesis and property (iii) of encoding sequences, 
we derive  that $i_m$ is at the end of the minimal path. Then, we transform the tree $T_{m+1}$ as follows:
 just exchange the places of $i_m$ and $j_m$ in $T_{m+1}$. The tree remains increasing because Then $j_m$ is at the end of the minimal path in $T_{m}$.
$$
\centering
\begin{pgfpicture}{38.62mm}{29.20mm}{114.81mm}{61.87mm}
\pgfsetxvec{\pgfpoint{0.80mm}{0mm}}
\pgfsetyvec{\pgfpoint{0mm}{0.80mm}}
\color[rgb]{0,0,0}\pgfsetlinewidth{0.30mm}\pgfsetdash{}{0mm}
\pgfsetlinewidth{1.20mm}\pgfmoveto{\pgfxy(95.00,65.00)}\pgflineto{\pgfxy(105.00,65.00)}\pgfstroke
\pgfmoveto{\pgfxy(105.00,65.00)}\pgflineto{\pgfxy(102.20,65.70)}\pgflineto{\pgfxy(102.20,64.30)}\pgflineto{\pgfxy(105.00,65.00)}\pgfclosepath\pgffill
\pgfmoveto{\pgfxy(105.00,65.00)}\pgflineto{\pgfxy(102.20,65.70)}\pgflineto{\pgfxy(102.20,64.30)}\pgflineto{\pgfxy(105.00,65.00)}\pgfclosepath\pgfstroke
\pgfputat{\pgfxy(57.00,59.00)}{\pgfbox[bottom,left]{\fontsize{9.10}{10.93}\selectfont \makebox[0pt][r]{$i_m$}}}
\pgfsetdash{{0.30mm}{0.50mm}}{0mm}\pgfsetlinewidth{0.30mm}\pgfmoveto{\pgfxy(60.00,50.00)}\pgflineto{\pgfxy(60.00,40.00)}\pgfstroke
\pgfcircle[fill]{\pgfxy(70.00,60.00)}{0.80mm}
\pgfsetdash{}{0mm}\pgfcircle[stroke]{\pgfxy(70.00,60.00)}{0.80mm}
\pgfcircle[fill]{\pgfxy(60.00,50.00)}{0.80mm}
\pgfcircle[stroke]{\pgfxy(60.00,50.00)}{0.80mm}
\pgfcircle[fill]{\pgfxy(60.00,40.00)}{0.80mm}
\pgfcircle[stroke]{\pgfxy(60.00,40.00)}{0.80mm}
\pgfmoveto{\pgfxy(60.00,40.00)}\pgflineto{\pgfxy(70.00,50.00)}\pgfstroke
\pgfcircle[fill]{\pgfxy(70.00,50.00)}{0.80mm}
\pgfcircle[stroke]{\pgfxy(70.00,50.00)}{0.80mm}
\pgfputat{\pgfxy(70.20,44.94)}{\pgfbox[bottom,left]{\fontsize{9.10}{10.93}\selectfont $j_m$}}
\pgfcircle[fill]{\pgfxy(60.00,60.00)}{0.80mm}
\pgfcircle[stroke]{\pgfxy(60.00,60.00)}{0.80mm}
\pgfmoveto{\pgfxy(60.00,50.00)}\pgflineto{\pgfxy(60.00,60.00)}\pgfstroke
\pgfmoveto{\pgfxy(70.00,60.00)}\pgflineto{\pgfxy(60.00,50.00)}\pgfstroke
\pgfellipse[stroke]{\pgfxy(70.37,67.33)}{\pgfxy(5.00,0.00)}{\pgfxy(0.00,7.51)}
\pgfputat{\pgfxy(70.19,66.22)}{\pgfbox[bottom,left]{\fontsize{9.10}{10.93}\selectfont \makebox[0pt]{$A$}}}
\pgfputat{\pgfxy(74.34,52.55)}{\pgfbox[bottom,left]{\fontsize{9.10}{10.93}\selectfont \makebox[0pt]{$B$}}}
\pgfputat{\pgfxy(117.00,59.00)}{\pgfbox[bottom,left]{\fontsize{9.10}{10.93}\selectfont \makebox[0pt][r]{$j_m$}}}
\pgfsetdash{{0.30mm}{0.50mm}}{0mm}\pgfmoveto{\pgfxy(120.00,50.00)}\pgflineto{\pgfxy(120.00,40.00)}\pgfstroke
\pgfcircle[fill]{\pgfxy(130.00,60.00)}{0.80mm}
\pgfsetdash{}{0mm}\pgfcircle[stroke]{\pgfxy(130.00,60.00)}{0.80mm}
\pgfcircle[fill]{\pgfxy(120.00,50.00)}{0.80mm}
\pgfcircle[stroke]{\pgfxy(120.00,50.00)}{0.80mm}
\pgfcircle[fill]{\pgfxy(120.00,40.00)}{0.80mm}
\pgfcircle[stroke]{\pgfxy(120.00,40.00)}{0.80mm}
\pgfmoveto{\pgfxy(120.00,40.00)}\pgflineto{\pgfxy(130.00,50.00)}\pgfstroke
\pgfcircle[fill]{\pgfxy(130.00,50.00)}{0.80mm}
\pgfcircle[stroke]{\pgfxy(130.00,50.00)}{0.80mm}
\pgfcircle[fill]{\pgfxy(120.00,60.00)}{0.80mm}
\pgfcircle[stroke]{\pgfxy(120.00,60.00)}{0.80mm}
\pgfmoveto{\pgfxy(120.00,50.00)}\pgflineto{\pgfxy(120.00,60.00)}\pgfstroke
\pgfmoveto{\pgfxy(130.00,60.00)}\pgflineto{\pgfxy(120.00,50.00)}\pgfstroke
\pgfellipse[stroke]{\pgfxy(130.31,67.33)}{\pgfxy(5.00,0.00)}{\pgfxy(0.00,7.51)}
\pgfellipse[stroke]{\pgfxy(74.59,53.36)}{\pgfxy(6.50,0.00)}{\pgfxy(0.00,4.51)}
\pgfputat{\pgfxy(130.25,66.53)}{\pgfbox[bottom,left]{\fontsize{9.10}{10.93}\selectfont \makebox[0pt]{$A$}}}
\pgfputat{\pgfxy(134.21,52.55)}{\pgfbox[bottom,left]{\fontsize{9.10}{10.93}\selectfont \makebox[0pt]{$B$}}}
\pgfellipse[stroke]{\pgfxy(134.52,53.17)}{\pgfxy(6.50,0.00)}{\pgfxy(0.00,4.51)}
\pgfputat{\pgfxy(130.89,44.75)}{\pgfbox[bottom,left]{\fontsize{11.38}{13.66}\selectfont $i_m$}}
\end{pgfpicture}%
    $$
\item[(iii)] If  $\Delta_{m}=(j_m,i_m)$, where $i_m$ and $j_m$ are siblings in $T_{m+1}$, as in the previous case, $i_{m}$ is at the end of the minimal path. Then,
transform $T_{m+1}$ with the following procedure. If $m_1$ denotes the parent of $i_m$ and $j_m$ in $T$, erase the edge $(m_1,j_m)$, create an edge $(i_m,j_m)$, then if $A$ and $B$ are the two subtrees starting from $j_m$ with $\min(A) < \min(B)$ (eventually $B$ is empty), cut the subtree $A$ from $j_m$ and add it as a direct subtree of $m_1$, cut the subtree $B$ from $j_m$ and add it as a direct subtree of $i_m$. The procedure can be illustrated with the following picture:
    $$
    \centering
\begin{pgfpicture}{37.91mm}{29.20mm}{122.00mm}{70.00mm}
\pgfsetxvec{\pgfpoint{0.80mm}{0mm}}
\pgfsetyvec{\pgfpoint{0mm}{0.80mm}}
\color[rgb]{0,0,0}\pgfsetlinewidth{0.30mm}\pgfsetdash{}{0mm}
\pgfsetdash{{0.30mm}{0.50mm}}{0mm}\pgfmoveto{\pgfxy(120.00,50.00)}\pgflineto{\pgfxy(120.00,40.00)}\pgfstroke
\pgfcircle[fill]{\pgfxy(120.00,50.00)}{0.80mm}
\pgfsetdash{}{0mm}\pgfcircle[stroke]{\pgfxy(120.00,50.00)}{0.80mm}
\pgfcircle[fill]{\pgfxy(120.00,40.00)}{0.80mm}
\pgfcircle[stroke]{\pgfxy(120.00,40.00)}{0.80mm}
\pgfmoveto{\pgfxy(120.00,50.00)}\pgflineto{\pgfxy(130.00,60.00)}\pgfstroke
\pgfcircle[fill]{\pgfxy(120.00,70.00)}{0.80mm}
\pgfcircle[stroke]{\pgfxy(120.00,70.00)}{0.80mm}
\pgfmoveto{\pgfxy(120.00,40.00)}\pgflineto{\pgfxy(130.00,50.00)}\pgfstroke
\pgfcircle[fill]{\pgfxy(130.00,50.00)}{0.80mm}
\pgfcircle[stroke]{\pgfxy(130.00,50.00)}{0.80mm}
\pgfputat{\pgfxy(117.90,59.08)}{\pgfbox[bottom,left]{\fontsize{9.10}{10.93}\selectfont \makebox[0pt][r]{$i_m$}}}
\pgfputat{\pgfxy(117.56,68.69)}{\pgfbox[bottom,left]{\fontsize{9.10}{10.93}\selectfont \makebox[0pt][r]{$j_m$}}}
\pgfcircle[fill]{\pgfxy(120.00,60.00)}{0.80mm}
\pgfcircle[stroke]{\pgfxy(120.00,60.00)}{0.80mm}
\pgfmoveto{\pgfxy(120.00,50.00)}\pgflineto{\pgfxy(120.00,60.00)}\pgfstroke
\pgfmoveto{\pgfxy(130.00,70.00)}\pgflineto{\pgfxy(120.00,60.00)}\pgfstroke
\pgfmoveto{\pgfxy(120.00,70.00)}\pgflineto{\pgfxy(120.00,60.00)}\pgfstroke
\pgfellipse[stroke]{\pgfxy(133.50,73.49)}{\pgfxy(6.50,0.00)}{\pgfxy(0.00,4.51)}
\pgfputat{\pgfxy(134.00,72.00)}{\pgfbox[bottom,left]{\fontsize{9.10}{10.93}\selectfont \makebox[0pt]{B}}}
\pgfcircle[fill]{\pgfxy(130.00,70.00)}{0.80mm}
\pgfcircle[stroke]{\pgfxy(130.00,70.00)}{0.80mm}
\pgfputat{\pgfxy(118.11,48.11)}{\pgfbox[bottom,left]{\fontsize{9.10}{10.93}\selectfont \makebox[0pt][r]{$m_1$}}}
\pgfcircle[fill]{\pgfxy(130.00,60.00)}{0.80mm}
\pgfcircle[stroke]{\pgfxy(130.00,60.00)}{0.80mm}
\pgfputat{\pgfxy(134.00,62.00)}{\pgfbox[bottom,left]{\fontsize{9.10}{10.93}\selectfont \makebox[0pt]{A}}}
\pgfellipse[stroke]{\pgfxy(133.50,63.49)}{\pgfxy(6.50,0.00)}{\pgfxy(0.00,4.51)}
\pgfellipse[stroke]{\pgfxy(133.50,53.49)}{\pgfxy(6.50,0.00)}{\pgfxy(0.00,4.51)}
\pgfsetlinewidth{1.20mm}\pgfmoveto{\pgfxy(95.00,65.00)}\pgflineto{\pgfxy(105.00,65.00)}\pgfstroke
\pgfmoveto{\pgfxy(105.00,65.00)}\pgflineto{\pgfxy(102.20,65.70)}\pgflineto{\pgfxy(102.20,64.30)}\pgflineto{\pgfxy(105.00,65.00)}\pgfclosepath\pgffill
\pgfmoveto{\pgfxy(105.00,65.00)}\pgflineto{\pgfxy(102.20,65.70)}\pgflineto{\pgfxy(102.20,64.30)}\pgflineto{\pgfxy(105.00,65.00)}\pgfclosepath\pgfstroke
\pgfputat{\pgfxy(57.00,59.00)}{\pgfbox[bottom,left]{\fontsize{9.10}{10.93}\selectfont \makebox[0pt][r]{$i_m$}}}
\pgfsetdash{{0.30mm}{0.50mm}}{0mm}\pgfsetlinewidth{0.30mm}\pgfmoveto{\pgfxy(60.00,50.00)}\pgflineto{\pgfxy(60.00,40.00)}\pgfstroke
\pgfcircle[fill]{\pgfxy(70.00,60.00)}{0.80mm}
\pgfsetdash{}{0mm}\pgfcircle[stroke]{\pgfxy(70.00,60.00)}{0.80mm}
\pgfcircle[fill]{\pgfxy(60.00,50.00)}{0.80mm}
\pgfcircle[stroke]{\pgfxy(60.00,50.00)}{0.80mm}
\pgfcircle[fill]{\pgfxy(60.00,40.00)}{0.80mm}
\pgfcircle[stroke]{\pgfxy(60.00,40.00)}{0.80mm}
\pgfmoveto{\pgfxy(70.00,60.00)}\pgflineto{\pgfxy(80.00,70.00)}\pgfstroke
\pgfmoveto{\pgfxy(60.00,40.00)}\pgflineto{\pgfxy(70.00,50.00)}\pgfstroke
\pgfcircle[fill]{\pgfxy(70.00,50.00)}{0.80mm}
\pgfcircle[stroke]{\pgfxy(70.00,50.00)}{0.80mm}
\pgfputat{\pgfxy(69.09,61.42)}{\pgfbox[bottom,left]{\fontsize{9.10}{10.93}\selectfont \makebox[0pt][r]{$j_m$}}}
\pgfcircle[fill]{\pgfxy(60.00,60.00)}{0.80mm}
\pgfcircle[stroke]{\pgfxy(60.00,60.00)}{0.80mm}
\pgfmoveto{\pgfxy(60.00,50.00)}\pgflineto{\pgfxy(60.00,60.00)}\pgfstroke
\pgfputat{\pgfxy(57.00,49.00)}{\pgfbox[bottom,left]{\fontsize{9.10}{10.93}\selectfont \makebox[0pt][r]{$m_1$}}}
\pgfmoveto{\pgfxy(70.00,60.00)}\pgflineto{\pgfxy(60.00,50.00)}\pgfstroke
\pgfmoveto{\pgfxy(70.00,70.00)}\pgflineto{\pgfxy(70.00,60.00)}\pgfstroke
\pgfellipse[stroke]{\pgfxy(70.00,77.49)}{\pgfxy(5.00,0.00)}{\pgfxy(0.00,7.51)}
\pgfputat{\pgfxy(70.00,77.00)}{\pgfbox[bottom,left]{\fontsize{9.10}{10.93}\selectfont \makebox[0pt]{$A$}}}
\pgfcircle[fill]{\pgfxy(70.00,70.00)}{0.80mm}
\pgfcircle[stroke]{\pgfxy(70.00,70.00)}{0.80mm}
\pgfcircle[fill]{\pgfxy(80.00,70.00)}{0.80mm}
\pgfcircle[stroke]{\pgfxy(80.00,70.00)}{0.80mm}
\pgfputat{\pgfxy(84.00,72.00)}{\pgfbox[bottom,left]{\fontsize{9.10}{10.93}\selectfont \makebox[0pt]{$B$}}}
\pgfellipse[stroke]{\pgfxy(83.50,73.49)}{\pgfxy(6.50,0.00)}{\pgfxy(0.00,4.51)}
\pgfellipse[stroke]{\pgfxy(73.50,53.49)}{\pgfxy(6.50,0.00)}{\pgfxy(0.00,4.51)}
\pgfputat{\pgfxy(72.23,52.33)}{\pgfbox[bottom,left]{\fontsize{9.10}{10.93}\selectfont C}}
\pgfputat{\pgfxy(132.34,51.67)}{\pgfbox[bottom,left]{\fontsize{9.10}{10.93}\selectfont C}}
\end{pgfpicture}%
    $$

\end{itemize}

Let $\varphi(\Delta):=T_{1}$, which is an element in $\mathcal{BT}_{n,k}$.


\begin{thm} \label{thmvarphi}
For all $n \geq 1$ and $k \in [n]$, the mapping $\varphi: \ESnk\longrightarrow \Tnk$  is a bijection.
\end{thm}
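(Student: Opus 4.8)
The plan is to establish, first, that $\varphi$ is well-defined, i.e.\ that the algorithm always outputs a binary increasing tree lying in $\Tnk$, and then that it is invertible. For \textbf{well-definedness} I would argue by descending induction on $m$, showing that the tree $T_m$ built from $\Delta_m\cdots\Delta_\ell$ is a binary increasing tree whose vertex set is exactly the set of integers occurring in the starred dominos of $\Delta_m\cdots\Delta_\ell$, and that it satisfies property~\eqref{propm}: the leaf of its minimal path is $j_m$, the first entry of $\Delta_m$. The base case $m=\ell$ is immediate. For the inductive step one treats the three cases (i)--(iii) separately; in each case the induction hypothesis $\textsc{Leaf}(T_{m+1})=j_{m+1}$, combined with the three defining conditions of an encoding sequence (Definition~\ref{def:enc}), forces the inequalities that make the transformation legal --- the inserted or relocated label respects the increasing condition, no vertex acquires a third child, and the modified minimal path stops exactly at $j_m$. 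The points needing care are: in case~(i) with $i_m>a_1$, that $i_m$ falls \emph{strictly} between two consecutive vertices $a_k<a_{k+1}$ of the minimal path of $T_{m+1}$ and that $j_m<a_{k+1}$, both from condition~(ii); in case~(ii), that the label $j_m$ really ends up at the end of the new minimal path, which uses that condition~(iii) forbids a vertex of $T_{m+1}$ in the open interval $(i_m,j_m)$; and in case~(iii), that $j_m$ becomes an actual leaf of $T_m$ after the subtree surgery. Taking $m=1$ and using $\Delta_1\in\{(k,k-1),(k,k-1)^{*}\}$ then gives $p(\varphi(\Delta))=j_1=k$, so $\varphi(\Delta)\in\Tnk$.

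For \textbf{bijectivity} I would construct $\varphi^{-1}$ by running the algorithm backwards: given $T\in\Tnk$, the transformation that produced it at the last step is recognisable from $T$ alone, so one can peel off $\Delta_1$, pass to a reduced tree $T_2$, and iterate. Let $j$ be the leaf of the minimal path of $T$ and $P$ its parent. If $P<j-1$, the last step was case~(ii) and one recovers $\Delta_1=(j,j-1)$ by swapping $j$ back with $j-1$. If $P=j-1$, the last step was case~(i) (whence $\Delta_1=(j,j-1)^{*}$) or case~(iii) (whence $\Delta_1=(j,j-1)$), and these are told apart as follows: if $j-1$ has out-degree $1$ it is case~(iii); if $j-1$ has out-degree $2$ and either is the root of $T$ or has a parent of out-degree $1$, it is case~(i); and if $j-1$ has out-degree $2$ and its parent $Q$ also has out-degree $2$, one compares the child $c\neq j$ of $j-1$ with the child $c'\neq j-1$ of $Q$ --- it is case~(iii) when $c'<c$ (reflecting $\min A<\min B$ in the construction) and case~(i) when $c'>c$. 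In each case $\Delta_1$ and $T_2$ are uniquely determined, and one checks that $\varphi$ applied to $\Delta_1$ reproduces the step, so $\varphi\circ\varphi^{-1}=\mathrm{id}$ and $\varphi^{-1}\circ\varphi=\mathrm{id}$. Alternatively, since $\abs{\ESnk}=E_{n,k}=\abs{\Tnk}$ --- the first equality by the preceding theorem, the second by Theorem~\ref{thm-pou1} --- it would suffice to prove $\varphi$ injective, which comes out of the same case analysis.

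The \textbf{main obstacle} is the reverse direction: verifying that the local configurations around the minimal-path leaf listed above are exhaustive and mutually exclusive --- equivalently, that the forward map never loses track of which of the rules (i)--(iii) was applied --- and that in case~(iii) one can correctly identify the ancestor $m_1$ together with the two subtrees $A$ (the one with smaller minimum) and $B$, and re-graft them so as to recover $T_{m+1}$ exactly. Once this local bookkeeping is settled, the rest --- following the vertex set and re-checking property~\eqref{propm} along the backward recursion --- is a routine induction paralleling the well-definedness argument.
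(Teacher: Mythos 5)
Your proposal is correct and follows essentially the same route as the paper: the paper also proves the theorem by recursively constructing $\varphi^{-1}$, peeling off the first domino of the code according to whether $k-1$ is the parent of $k$ and, if so, comparing the other child of $k-1$ with the sibling of $k-1$ --- exactly your out-degree/child-comparison discrimination --- and then checking that the output satisfies Definition~\ref{def:enc}. Your additional forward well-definedness induction and the counting shortcut via $\abs{\ESnk}=E_{n,k}=\abs{\Tnk}$ are fine but not part of the paper's argument.
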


\begin{proof}
It is sufficient to construct the inverse mapping of $\varphi$ to show that this is a bijection.
Given $T$ an increasing tree on the ordered set $\{a_1,\ldots,a_n \}$ with $a_1<\cdots<a_n$, such that $p(T)=a_k$ (that can be interpreted by an element of $\Tnk$), we construct an encoding sequence $\Delta=\varphi^{-1}(T)$ of $[n]$ recursively as follows:
 \begin{itemize}
\item[(a)] If $a_{k-1}$ is the parent of $a_{k}$ in $T$,
then let $m$ ($m >{a_k}$) be the other child of $a_{k-1}$ ($m=\infty$ if $a_k$ is the only child of $a_{k-1}$) and $s$ ($s>k$) be a sibling of
$a_{k-1}$ ($s=\infty$ if $a_{k-1}$ has no sibling), and $j$ the parent of $a_{k-1}$ in $T$.
\begin{itemize}
\item[(a1)] 
 If $m<\infty$ and $m<s$, then define $\varphi^{-1}(T)=\left((a_k,a_{k-1})^{*} ,\varphi^{-1}(T')\right)$, where $T'$ is the tree obtained from $T$ by deleting the vertices $a_{k-1}$, $a_{k}$ and their adjacent edges in $T$, and adding a new edge between $m$ and $j$.
    $$
    \centering
\begin{pgfpicture}{38.00mm}{29.20mm}{122.00mm}{74.00mm}
\pgfsetxvec{\pgfpoint{0.80mm}{0mm}}
\pgfsetyvec{\pgfpoint{0mm}{0.80mm}}
\color[rgb]{0,0,0}\pgfsetlinewidth{0.30mm}\pgfsetdash{}{0mm}
\pgfputat{\pgfxy(67.00,80.00)}{\pgfbox[bottom,left]{\fontsize{11.38}{13.66}\selectfont \makebox[0pt]{$T$}}}
\pgfsetdash{{0.30mm}{0.50mm}}{0mm}\pgfmoveto{\pgfxy(70.00,50.00)}\pgflineto{\pgfxy(70.00,40.00)}\pgfstroke
\pgfcircle[fill]{\pgfxy(70.00,50.00)}{0.80mm}
\pgfsetdash{}{0mm}\pgfcircle[stroke]{\pgfxy(70.00,50.00)}{0.80mm}
\pgfcircle[fill]{\pgfxy(70.00,40.00)}{0.80mm}
\pgfcircle[stroke]{\pgfxy(70.00,40.00)}{0.80mm}
\pgfmoveto{\pgfxy(70.00,50.00)}\pgflineto{\pgfxy(80.00,60.00)}\pgfstroke
\pgfcircle[fill]{\pgfxy(70.00,70.00)}{0.80mm}
\pgfcircle[stroke]{\pgfxy(70.00,70.00)}{0.80mm}
\pgfmoveto{\pgfxy(70.00,40.00)}\pgflineto{\pgfxy(80.00,50.00)}\pgfstroke
\pgfcircle[fill]{\pgfxy(80.00,50.00)}{0.80mm}
\pgfcircle[stroke]{\pgfxy(80.00,50.00)}{0.80mm}
\pgfputat{\pgfxy(67.00,69.00)}{\pgfbox[bottom,left]{\fontsize{9.10}{10.93}\selectfont \makebox[0pt][r]{$a_k$}}}
\pgfputat{\pgfxy(67.00,59.00)}{\pgfbox[bottom,left]{\fontsize{9.10}{10.93}\selectfont \makebox[0pt][r]{$a_{k-1}$}}}
\pgfcircle[fill]{\pgfxy(70.00,60.00)}{0.80mm}
\pgfcircle[stroke]{\pgfxy(70.00,60.00)}{0.80mm}
\pgfmoveto{\pgfxy(70.00,50.00)}\pgflineto{\pgfxy(70.00,60.00)}\pgfstroke
\pgfmoveto{\pgfxy(80.00,70.00)}\pgflineto{\pgfxy(70.00,60.00)}\pgfstroke
\pgfmoveto{\pgfxy(70.00,70.00)}\pgflineto{\pgfxy(70.00,60.00)}\pgfstroke
\pgfellipse[stroke]{\pgfxy(80.00,79.49)}{\pgfxy(6.00,0.00)}{\pgfxy(0.00,9.51)}
\pgfputat{\pgfxy(83.00,77.00)}{\pgfbox[bottom,left]{\fontsize{9.10}{10.93}\selectfont \makebox[0pt]{$B$}}}
\pgfcircle[fill]{\pgfxy(80.00,70.00)}{0.80mm}
\pgfcircle[stroke]{\pgfxy(80.00,70.00)}{0.80mm}
\pgfputat{\pgfxy(80.00,72.00)}{\pgfbox[bottom,left]{\fontsize{9.10}{10.93}\selectfont \makebox[0pt][r]{$m$}}}
\pgfputat{\pgfxy(68.00,48.00)}{\pgfbox[bottom,left]{\fontsize{9.10}{10.93}\selectfont \makebox[0pt][r]{$j$}}}
\pgfcircle[fill]{\pgfxy(80.00,60.00)}{0.80mm}
\pgfcircle[stroke]{\pgfxy(80.00,60.00)}{0.80mm}
\pgfputat{\pgfxy(86.00,63.00)}{\pgfbox[bottom,left]{\fontsize{9.10}{10.93}\selectfont \makebox[0pt]{$A$}}}
\pgfputat{\pgfxy(80.00,62.00)}{\pgfbox[bottom,left]{\fontsize{9.10}{10.93}\selectfont \makebox[0pt]{$s$}}}
\pgfellipse[stroke]{\pgfxy(83.50,63.49)}{\pgfxy(6.50,0.00)}{\pgfxy(0.00,4.51)}
\pgfellipse[stroke]{\pgfxy(83.50,53.49)}{\pgfxy(6.50,0.00)}{\pgfxy(0.00,4.51)}
\pgfsetlinewidth{1.20mm}\pgfmoveto{\pgfxy(95.00,65.00)}\pgflineto{\pgfxy(105.00,65.00)}\pgfstroke
\pgfmoveto{\pgfxy(105.00,65.00)}\pgflineto{\pgfxy(102.20,65.70)}\pgflineto{\pgfxy(102.20,64.30)}\pgflineto{\pgfxy(105.00,65.00)}\pgfclosepath\pgffill
\pgfmoveto{\pgfxy(105.00,65.00)}\pgflineto{\pgfxy(102.20,65.70)}\pgflineto{\pgfxy(102.20,64.30)}\pgflineto{\pgfxy(105.00,65.00)}\pgfclosepath\pgfstroke
\pgfputat{\pgfxy(129.71,80.00)}{\pgfbox[bottom,left]{\fontsize{11.38}{13.66}\selectfont \makebox[0pt]{$T'$}}}
\pgfcircle[fill]{\pgfxy(80.00,84.00)}{0.80mm}
\pgfsetlinewidth{0.30mm}\pgfcircle[stroke]{\pgfxy(80.00,84.00)}{0.80mm}
\pgfsetdash{{0.30mm}{0.50mm}}{0mm}\pgfmoveto{\pgfxy(80.00,84.00)}\pgflineto{\pgfxy(80.00,70.00)}\pgfstroke
\pgfmoveto{\pgfxy(120.00,50.00)}\pgflineto{\pgfxy(120.00,40.00)}\pgfstroke
\pgfcircle[fill]{\pgfxy(120.00,50.00)}{0.80mm}
\pgfsetdash{}{0mm}\pgfcircle[stroke]{\pgfxy(120.00,50.00)}{0.80mm}
\pgfcircle[fill]{\pgfxy(120.00,40.00)}{0.80mm}
\pgfcircle[stroke]{\pgfxy(120.00,40.00)}{0.80mm}
\pgfmoveto{\pgfxy(120.00,50.00)}\pgflineto{\pgfxy(130.00,60.00)}\pgfstroke
\pgfmoveto{\pgfxy(120.00,40.00)}\pgflineto{\pgfxy(130.00,50.00)}\pgfstroke
\pgfcircle[fill]{\pgfxy(130.00,50.00)}{0.80mm}
\pgfcircle[stroke]{\pgfxy(130.00,50.00)}{0.80mm}
\pgfmoveto{\pgfxy(120.00,50.00)}\pgflineto{\pgfxy(120.00,60.00)}\pgfstroke
\pgfellipse[stroke]{\pgfxy(120.00,69.49)}{\pgfxy(6.00,0.00)}{\pgfxy(0.00,9.51)}
\pgfputat{\pgfxy(123.00,67.00)}{\pgfbox[bottom,left]{\fontsize{9.10}{10.93}\selectfont \makebox[0pt]{$B$}}}
\pgfcircle[fill]{\pgfxy(120.00,60.00)}{0.80mm}
\pgfcircle[stroke]{\pgfxy(120.00,60.00)}{0.80mm}
\pgfputat{\pgfxy(120.00,62.00)}{\pgfbox[bottom,left]{\fontsize{9.10}{10.93}\selectfont \makebox[0pt][r]{$m$}}}
\pgfputat{\pgfxy(118.00,48.00)}{\pgfbox[bottom,left]{\fontsize{9.10}{10.93}\selectfont \makebox[0pt][r]{$j$}}}
\pgfcircle[fill]{\pgfxy(130.00,60.00)}{0.80mm}
\pgfcircle[stroke]{\pgfxy(130.00,60.00)}{0.80mm}
\pgfputat{\pgfxy(136.00,63.00)}{\pgfbox[bottom,left]{\fontsize{9.10}{10.93}\selectfont \makebox[0pt]{$A$}}}
\pgfputat{\pgfxy(130.00,62.00)}{\pgfbox[bottom,left]{\fontsize{9.10}{10.93}\selectfont \makebox[0pt]{$s$}}}
\pgfellipse[stroke]{\pgfxy(133.50,63.49)}{\pgfxy(6.50,0.00)}{\pgfxy(0.00,4.51)}
\pgfellipse[stroke]{\pgfxy(133.50,53.49)}{\pgfxy(6.50,0.00)}{\pgfxy(0.00,4.51)}
\pgfcircle[fill]{\pgfxy(120.00,74.00)}{0.80mm}
\pgfcircle[stroke]{\pgfxy(120.00,74.00)}{0.80mm}
\pgfsetdash{{0.30mm}{0.50mm}}{0mm}\pgfmoveto{\pgfxy(120.00,74.00)}\pgflineto{\pgfxy(120.00,60.00)}\pgfstroke
\pgfputat{\pgfxy(83.69,52.47)}{\pgfbox[bottom,left]{\fontsize{9.10}{10.93}\selectfont \makebox[0pt]{$C$}}}
\pgfputat{\pgfxy(133.30,52.33)}{\pgfbox[bottom,left]{\fontsize{9.10}{10.93}\selectfont \makebox[0pt]{$C$}}}
\end{pgfpicture}%
$$
\item[(a2)] In the other cases ($m=\infty$ or $m>s$), then define $\varphi^{-1}(T)=\left((a_{k},a_{k-1}),\varphi^{-1}(T')\right)$, where $T'$ is the tree obtained from $T$ by erasing the edges $(a_{k-1},a_k)$, $(a_{k-1},m)$ and $(j,s)$ in $T$, and adding the edges $(j,a_k)$, $(a_k,s)$, $(a_k,m)$. The procedure can be illustrated with the following picture:
    $$
    \centering
\begin{pgfpicture}{44.31mm}{29.20mm}{122.00mm}{70.00mm}
\pgfsetxvec{\pgfpoint{0.80mm}{0mm}}
\pgfsetyvec{\pgfpoint{0mm}{0.80mm}}
\color[rgb]{0,0,0}\pgfsetlinewidth{0.30mm}\pgfsetdash{}{0mm}
\pgfputat{\pgfxy(71.26,77.35)}{\pgfbox[bottom,left]{\fontsize{11.38}{13.66}\selectfont \makebox[0pt]{$T$}}}
\pgfsetdash{{0.30mm}{0.50mm}}{0mm}\pgfmoveto{\pgfxy(70.00,50.00)}\pgflineto{\pgfxy(70.00,40.00)}\pgfstroke
\pgfcircle[fill]{\pgfxy(70.00,50.00)}{0.80mm}
\pgfsetdash{}{0mm}\pgfcircle[stroke]{\pgfxy(70.00,50.00)}{0.80mm}
\pgfcircle[fill]{\pgfxy(70.00,40.00)}{0.80mm}
\pgfcircle[stroke]{\pgfxy(70.00,40.00)}{0.80mm}
\pgfmoveto{\pgfxy(70.00,50.00)}\pgflineto{\pgfxy(80.00,60.00)}\pgfstroke
\pgfcircle[fill]{\pgfxy(70.00,70.00)}{0.80mm}
\pgfcircle[stroke]{\pgfxy(70.00,70.00)}{0.80mm}
\pgfmoveto{\pgfxy(70.00,40.00)}\pgflineto{\pgfxy(80.00,50.00)}\pgfstroke
\pgfcircle[fill]{\pgfxy(80.00,50.00)}{0.80mm}
\pgfcircle[stroke]{\pgfxy(80.00,50.00)}{0.80mm}
\pgfputat{\pgfxy(67.00,69.00)}{\pgfbox[bottom,left]{\fontsize{9.10}{10.93}\selectfont \makebox[0pt][r]{$a_k$}}}
\pgfputat{\pgfxy(67.00,59.00)}{\pgfbox[bottom,left]{\fontsize{9.10}{10.93}\selectfont \makebox[0pt][r]{$a_{k-1}$}}}
\pgfcircle[fill]{\pgfxy(70.00,60.00)}{0.80mm}
\pgfcircle[stroke]{\pgfxy(70.00,60.00)}{0.80mm}
\pgfmoveto{\pgfxy(70.00,50.00)}\pgflineto{\pgfxy(70.00,60.00)}\pgfstroke
\pgfmoveto{\pgfxy(80.00,70.00)}\pgflineto{\pgfxy(70.00,60.00)}\pgfstroke
\pgfmoveto{\pgfxy(70.00,70.00)}\pgflineto{\pgfxy(70.00,60.00)}\pgfstroke
\pgfellipse[stroke]{\pgfxy(83.50,73.49)}{\pgfxy(6.50,0.00)}{\pgfxy(0.00,4.51)}
\pgfputat{\pgfxy(86.00,73.00)}{\pgfbox[bottom,left]{\fontsize{9.10}{10.93}\selectfont \makebox[0pt]{$B$}}}
\pgfcircle[fill]{\pgfxy(80.00,70.00)}{0.80mm}
\pgfcircle[stroke]{\pgfxy(80.00,70.00)}{0.80mm}
\pgfputat{\pgfxy(80.00,72.00)}{\pgfbox[bottom,left]{\fontsize{9.10}{10.93}\selectfont \makebox[0pt]{$m$}}}
\pgfputat{\pgfxy(68.00,48.00)}{\pgfbox[bottom,left]{\fontsize{9.10}{10.93}\selectfont \makebox[0pt][r]{$j$}}}
\pgfcircle[fill]{\pgfxy(80.00,60.00)}{0.80mm}
\pgfcircle[stroke]{\pgfxy(80.00,60.00)}{0.80mm}
\pgfputat{\pgfxy(86.00,63.00)}{\pgfbox[bottom,left]{\fontsize{9.10}{10.93}\selectfont \makebox[0pt]{$A$}}}
\pgfputat{\pgfxy(80.00,62.00)}{\pgfbox[bottom,left]{\fontsize{9.10}{10.93}\selectfont \makebox[0pt]{$s$}}}
\pgfellipse[stroke]{\pgfxy(83.50,63.49)}{\pgfxy(6.50,0.00)}{\pgfxy(0.00,4.51)}
\pgfellipse[stroke]{\pgfxy(83.50,53.49)}{\pgfxy(6.50,0.00)}{\pgfxy(0.00,4.51)}
\pgfsetlinewidth{1.20mm}\pgfmoveto{\pgfxy(95.00,65.00)}\pgflineto{\pgfxy(105.00,65.00)}\pgfstroke
\pgfmoveto{\pgfxy(105.00,65.00)}\pgflineto{\pgfxy(102.20,65.70)}\pgflineto{\pgfxy(102.20,64.30)}\pgflineto{\pgfxy(105.00,65.00)}\pgfclosepath\pgffill
\pgfmoveto{\pgfxy(105.00,65.00)}\pgflineto{\pgfxy(102.20,65.70)}\pgflineto{\pgfxy(102.20,64.30)}\pgflineto{\pgfxy(105.00,65.00)}\pgfclosepath\pgfstroke
\pgfputat{\pgfxy(120.27,73.25)}{\pgfbox[bottom,left]{\fontsize{11.38}{13.66}\selectfont \makebox[0pt]{$T'$}}}
\pgfputat{\pgfxy(117.00,59.00)}{\pgfbox[bottom,left]{\fontsize{9.10}{10.93}\selectfont \makebox[0pt][r]{$a_{k-1}$}}}
\pgfsetdash{{0.30mm}{0.50mm}}{0mm}\pgfsetlinewidth{0.30mm}\pgfmoveto{\pgfxy(120.00,50.00)}\pgflineto{\pgfxy(120.00,40.00)}\pgfstroke
\pgfcircle[fill]{\pgfxy(130.00,60.00)}{0.80mm}
\pgfsetdash{}{0mm}\pgfcircle[stroke]{\pgfxy(130.00,60.00)}{0.80mm}
\pgfcircle[fill]{\pgfxy(120.00,50.00)}{0.80mm}
\pgfcircle[stroke]{\pgfxy(120.00,50.00)}{0.80mm}
\pgfcircle[fill]{\pgfxy(120.00,40.00)}{0.80mm}
\pgfcircle[stroke]{\pgfxy(120.00,40.00)}{0.80mm}
\pgfmoveto{\pgfxy(130.00,60.00)}\pgflineto{\pgfxy(140.00,70.00)}\pgfstroke
\pgfmoveto{\pgfxy(120.00,40.00)}\pgflineto{\pgfxy(130.00,50.00)}\pgfstroke
\pgfcircle[fill]{\pgfxy(130.00,50.00)}{0.80mm}
\pgfcircle[stroke]{\pgfxy(130.00,50.00)}{0.80mm}
\pgfputat{\pgfxy(132.21,59.00)}{\pgfbox[bottom,left]{\fontsize{9.10}{10.93}\selectfont $a_k$}}
\pgfcircle[fill]{\pgfxy(120.00,60.00)}{0.80mm}
\pgfcircle[stroke]{\pgfxy(120.00,60.00)}{0.80mm}
\pgfmoveto{\pgfxy(120.00,50.00)}\pgflineto{\pgfxy(120.00,60.00)}\pgfstroke
\pgfputat{\pgfxy(117.00,49.00)}{\pgfbox[bottom,left]{\fontsize{9.10}{10.93}\selectfont \makebox[0pt][r]{$j$}}}
\pgfmoveto{\pgfxy(130.00,60.00)}\pgflineto{\pgfxy(120.00,50.00)}\pgfstroke
\pgfmoveto{\pgfxy(130.00,70.00)}\pgflineto{\pgfxy(130.00,60.00)}\pgfstroke
\pgfellipse[stroke]{\pgfxy(130.00,77.49)}{\pgfxy(5.00,0.00)}{\pgfxy(0.00,7.51)}
\pgfputat{\pgfxy(130.00,78.00)}{\pgfbox[bottom,left]{\fontsize{9.10}{10.93}\selectfont \makebox[0pt]{$A$}}}
\pgfcircle[fill]{\pgfxy(130.00,70.00)}{0.80mm}
\pgfcircle[stroke]{\pgfxy(130.00,70.00)}{0.80mm}
\pgfputat{\pgfxy(140.00,72.00)}{\pgfbox[bottom,left]{\fontsize{9.10}{10.93}\selectfont \makebox[0pt]{$m$}}}
\pgfcircle[fill]{\pgfxy(140.00,70.00)}{0.80mm}
\pgfcircle[stroke]{\pgfxy(140.00,70.00)}{0.80mm}
\pgfputat{\pgfxy(146.00,73.00)}{\pgfbox[bottom,left]{\fontsize{9.10}{10.93}\selectfont \makebox[0pt]{$B$}}}
\pgfputat{\pgfxy(130.00,72.00)}{\pgfbox[bottom,left]{\fontsize{9.10}{10.93}\selectfont \makebox[0pt]{$s$}}}
\pgfellipse[stroke]{\pgfxy(143.50,73.49)}{\pgfxy(6.50,0.00)}{\pgfxy(0.00,4.51)}
\pgfellipse[stroke]{\pgfxy(133.50,53.49)}{\pgfxy(6.50,0.00)}{\pgfxy(0.00,4.51)}
\pgfputat{\pgfxy(86.26,52.54)}{\pgfbox[bottom,left]{\fontsize{9.10}{10.93}\selectfont \makebox[0pt]{$C$}}}
\pgfputat{\pgfxy(134.59,52.67)}{\pgfbox[bottom,left]{\fontsize{9.10}{10.93}\selectfont \makebox[0pt]{$C$}}}
\end{pgfpicture}%
    $$
    \end{itemize}
\item[(b)] If $a_{k-1}$ is not the parent of $a_k$ in $T$, then define $\varphi^{-1}(T)=\left((a_k,a_{k-1}),\varphi^{-1}(T')\right)$, where $T'$ is the tree obtained from $T$ by exchanging the labels $a_{k-1}$ and $a_k$ in $T$. \label{case:B}
    $$
    \centering
\begin{pgfpicture}{38.98mm}{29.20mm}{114.00mm}{62.05mm}
\pgfsetxvec{\pgfpoint{0.80mm}{0mm}}
\pgfsetyvec{\pgfpoint{0mm}{0.80mm}}
\color[rgb]{0,0,0}\pgfsetlinewidth{0.30mm}\pgfsetdash{}{0mm}
\pgfsetlinewidth{1.20mm}\pgfmoveto{\pgfxy(95.00,65.00)}\pgflineto{\pgfxy(105.00,65.00)}\pgfstroke
\pgfmoveto{\pgfxy(105.00,65.00)}\pgflineto{\pgfxy(102.20,65.70)}\pgflineto{\pgfxy(102.20,64.30)}\pgflineto{\pgfxy(105.00,65.00)}\pgfclosepath\pgffill
\pgfmoveto{\pgfxy(105.00,65.00)}\pgflineto{\pgfxy(102.20,65.70)}\pgflineto{\pgfxy(102.20,64.30)}\pgflineto{\pgfxy(105.00,65.00)}\pgfclosepath\pgfstroke
\pgfputat{\pgfxy(117.00,59.00)}{\pgfbox[bottom,left]{\fontsize{9.10}{10.93}\selectfont \makebox[0pt][r]{$a_{k-1}$}}}
\pgfsetdash{{0.30mm}{0.50mm}}{0mm}\pgfsetlinewidth{0.30mm}\pgfmoveto{\pgfxy(120.00,50.00)}\pgflineto{\pgfxy(120.00,40.00)}\pgfstroke
\pgfcircle[fill]{\pgfxy(130.00,60.00)}{0.80mm}
\pgfsetdash{}{0mm}\pgfcircle[stroke]{\pgfxy(130.00,60.00)}{0.80mm}
\pgfcircle[fill]{\pgfxy(120.00,50.00)}{0.80mm}
\pgfcircle[stroke]{\pgfxy(120.00,50.00)}{0.80mm}
\pgfcircle[fill]{\pgfxy(120.00,40.00)}{0.80mm}
\pgfcircle[stroke]{\pgfxy(120.00,40.00)}{0.80mm}
\pgfmoveto{\pgfxy(120.00,40.00)}\pgflineto{\pgfxy(130.00,50.00)}\pgfstroke
\pgfcircle[fill]{\pgfxy(130.00,50.00)}{0.80mm}
\pgfcircle[stroke]{\pgfxy(130.00,50.00)}{0.80mm}
\pgfputat{\pgfxy(130.41,45.25)}{\pgfbox[bottom,left]{\fontsize{9.10}{10.93}\selectfont $a_k$}}
\pgfcircle[fill]{\pgfxy(120.00,60.00)}{0.80mm}
\pgfcircle[stroke]{\pgfxy(120.00,60.00)}{0.80mm}
\pgfmoveto{\pgfxy(120.00,50.00)}\pgflineto{\pgfxy(120.00,60.00)}\pgfstroke
\pgfmoveto{\pgfxy(130.00,60.00)}\pgflineto{\pgfxy(120.00,50.00)}\pgfstroke
\pgfellipse[stroke]{\pgfxy(130.00,67.42)}{\pgfxy(5.00,0.00)}{\pgfxy(0.00,7.51)}
\pgfputat{\pgfxy(130.00,65.88)}{\pgfbox[bottom,left]{\fontsize{9.10}{10.93}\selectfont \makebox[0pt]{$A$}}}
\pgfputat{\pgfxy(133.67,52.27)}{\pgfbox[bottom,left]{\fontsize{9.10}{10.93}\selectfont \makebox[0pt]{$B$}}}
\pgfellipse[stroke]{\pgfxy(133.50,53.49)}{\pgfxy(6.50,0.00)}{\pgfxy(0.00,4.51)}
\pgfputat{\pgfxy(57.00,59.00)}{\pgfbox[bottom,left]{\fontsize{9.10}{10.93}\selectfont \makebox[0pt][r]{$a_k$}}}
\pgfsetdash{{0.30mm}{0.50mm}}{0mm}\pgfmoveto{\pgfxy(60.00,50.00)}\pgflineto{\pgfxy(60.00,40.00)}\pgfstroke
\pgfcircle[fill]{\pgfxy(70.00,60.00)}{0.80mm}
\pgfsetdash{}{0mm}\pgfcircle[stroke]{\pgfxy(70.00,60.00)}{0.80mm}
\pgfcircle[fill]{\pgfxy(60.00,50.00)}{0.80mm}
\pgfcircle[stroke]{\pgfxy(60.00,50.00)}{0.80mm}
\pgfcircle[fill]{\pgfxy(60.00,40.00)}{0.80mm}
\pgfcircle[stroke]{\pgfxy(60.00,40.00)}{0.80mm}
\pgfmoveto{\pgfxy(60.00,40.00)}\pgflineto{\pgfxy(70.00,50.00)}\pgfstroke
\pgfcircle[fill]{\pgfxy(70.00,50.00)}{0.80mm}
\pgfcircle[stroke]{\pgfxy(70.00,50.00)}{0.80mm}
\pgfputat{\pgfxy(72.81,44.98)}{\pgfbox[bottom,left]{\fontsize{9.10}{10.93}\selectfont \makebox[0pt]{$a_{k-1}$}}}
\pgfcircle[fill]{\pgfxy(60.00,60.00)}{0.80mm}
\pgfcircle[stroke]{\pgfxy(60.00,60.00)}{0.80mm}
\pgfmoveto{\pgfxy(60.00,50.00)}\pgflineto{\pgfxy(60.00,60.00)}\pgfstroke
\pgfmoveto{\pgfxy(70.00,60.00)}\pgflineto{\pgfxy(60.00,50.00)}\pgfstroke
\pgfellipse[stroke]{\pgfxy(70.13,67.29)}{\pgfxy(5.00,0.00)}{\pgfxy(0.00,7.51)}
\pgfellipse[stroke]{\pgfxy(73.50,53.49)}{\pgfxy(6.50,0.00)}{\pgfxy(0.00,4.51)}
\pgfputat{\pgfxy(70.26,65.61)}{\pgfbox[bottom,left]{\fontsize{9.10}{10.93}\selectfont \makebox[0pt]{$A$}}}
\pgfputat{\pgfxy(73.67,52.54)}{\pgfbox[bottom,left]{\fontsize{9.10}{10.93}\selectfont \makebox[0pt]{$B$}}}
\pgfputat{\pgfxy(59.35,71.05)}{\pgfbox[bottom,left]{\fontsize{11.38}{13.66}\selectfont \makebox[0pt]{$T$}}}
\pgfputat{\pgfxy(117.87,71.14)}{\pgfbox[bottom,left]{\fontsize{11.38}{13.66}\selectfont \makebox[0pt]{$T'$}}}
\end{pgfpicture}%
$$
   \end{itemize}
   Note that cases (a1), (a2) and (b) in the construction of $\varphi^{-1}$ correspond, respectively, to cases (i), (ii) and (iii) of the construction of $\varphi$.
   
   It remains to prove that the obtained sequence $\Delta$ verifies the points (i)-(iii) of Definition~\ref{def:enc}.
\begin{itemize}
\item[$\bullet$] It is easily seen that each integer of $[n]$ is removed once off $T$. So (i) is verified.
\item[$\bullet$] If an element $(j,i)^{*}$ appears in $\Delta$, that corresponds to the case (a1), when we delete the vertices $i$ and $j$ from the tree $T$. Then the next elements in $\Delta$ don't contain either $i$ or $j$ since they correspond to $\varphi^{-1}(T')$. Moreover, if we are in the case (a1), the minimal path in the tree $T'$ contains at least one element $m$ with $m > j> i$, so the next element in $\Delta$ must be $(m,k)$ with $m>k$. Thus  (ii) is verified.
\item[$\bullet$] If an element $(j,i)$ appears in $\Delta$, in both Case (a2) or Case (b), the tree $T'$ has $i$ as the leaf of the minimal path. Then, the next element in $\Delta$ must be $(i,k)$ with $i>k$. Moreover, $i$ and $j$ must be consecutive elements in the ordered set of labels in $T$. Then the elements $\ell$ such that $i<\ell<j$ don't appear in $T$. Thus (iii) is  verified.
\end{itemize}
\end{proof}

Let $\Psi=\varphi\circ \psi$. Then $\Psi: \Ank\to \Tnk$ is a bijection satisfying
$ \pi_1=p(\Psi(\pi))$ for all $ \pi \in \Ank$. Thus Theorem~\ref{mainth} is proved.

\begin{ex} Continuing the Example~\ref{ex2.2}, we apply $\Psi$ to $\pi$ by using  the known LR-code  of $\pi=7\,4\,8\,5\,9\,1\,6\,2\,3$. The details are
given in Figure~\ref{fig1}.
\begin{figure}[!h]
\begin{tabular}{c||c|c|c|c|c|c}
$\pi^{(m)}$ & $9$ & $839$ & $938$ & $21938$ & $31928$ & $81923$
\\
\hline
$\textsc{First}(\pi^{(m)})$ & 9 & 8 & 9 & 2 & 3 & 8
\\
\hline
$\Delta_m$\rule[-7pt]{0pt}{20pt} & $(9)^{*} $ & $(8,3)^{*} $ & $(9,8)$ & $(2,1)^{*} $ & $(3,2)$ & $(8,3)$
\\
\hline
$
\centering
\begin{pgfpicture}{-2.00mm}{-2.01mm}{11.51mm}{15.98mm}
\pgfsetxvec{\pgfpoint{0.70mm}{0mm}}
\pgfsetyvec{\pgfpoint{0mm}{0.70mm}}
\color[rgb]{0,0,0}\pgfsetlinewidth{0.30mm}\pgfsetdash{}{0mm}
\pgfputat{\pgfxy(7.00,9.00)}{\pgfbox[bottom,left]{\fontsize{11}{9.56}\selectfont \makebox[0pt]{$T_{m}=\Psi(\pi^{(m)})$}}}
\end{pgfpicture}%
$ &
$
\centering
\begin{pgfpicture}{1.50mm}{3.70mm}{9.70mm}{10.50mm}
\pgfsetxvec{\pgfpoint{0.70mm}{0mm}}
\pgfsetyvec{\pgfpoint{0mm}{0.70mm}}
\color[rgb]{0,0,0}\pgfsetlinewidth{0.30mm}\pgfsetdash{}{0mm}
\pgfcircle[fill]{\pgfxy(10.00,10.00)}{0.70mm}
\pgfcircle[stroke]{\pgfxy(10.00,10.00)}{0.70mm}
\pgfputat{\pgfxy(7.00,9.00)}{\pgfbox[bottom,left]{\fontsize{7.97}{9.56}\selectfont \makebox[0pt][r]{$9$}}}
\end{pgfpicture}%
$ &
$
\centering
\begin{pgfpicture}{-2.00mm}{3.70mm}{12.50mm}{14.00mm}
\pgfsetxvec{\pgfpoint{0.70mm}{0mm}}
\pgfsetyvec{\pgfpoint{0mm}{0.70mm}}
\color[rgb]{0,0,0}\pgfsetlinewidth{0.30mm}\pgfsetdash{}{0mm}
\pgfcircle[fill]{\pgfxy(5.00,10.00)}{0.70mm}
\pgfcircle[stroke]{\pgfxy(5.00,10.00)}{0.70mm}
\pgfputat{\pgfxy(2.00,9.00)}{\pgfbox[bottom,left]{\fontsize{7.97}{9.56}\selectfont \makebox[0pt][r]{$3$}}}
\pgfmoveto{\pgfxy(5.00,15.00)}\pgflineto{\pgfxy(5.00,10.00)}\pgfstroke
\pgfcircle[fill]{\pgfxy(5.00,15.00)}{0.70mm}
\pgfcircle[stroke]{\pgfxy(5.00,15.00)}{0.70mm}
\pgfcircle[fill]{\pgfxy(10.00,15.00)}{0.70mm}
\pgfcircle[stroke]{\pgfxy(10.00,15.00)}{0.70mm}
\pgfmoveto{\pgfxy(10.00,15.00)}\pgflineto{\pgfxy(5.00,10.00)}\pgfstroke
\pgfputat{\pgfxy(13.00,14.00)}{\pgfbox[bottom,left]{\fontsize{7.97}{9.56}\selectfont $9$}}
\pgfputat{\pgfxy(2.00,14.00)}{\pgfbox[bottom,left]{\fontsize{7.97}{9.56}\selectfont \makebox[0pt][r]{$8$}}}
\end{pgfpicture}%
$ &
$
\centering
\begin{pgfpicture}{4.30mm}{3.70mm}{12.50mm}{17.50mm}
\pgfsetxvec{\pgfpoint{0.70mm}{0mm}}
\pgfsetyvec{\pgfpoint{0mm}{0.70mm}}
\color[rgb]{0,0,0}\pgfsetlinewidth{0.30mm}\pgfsetdash{}{0mm}
\pgfcircle[fill]{\pgfxy(10.00,10.00)}{0.70mm}
\pgfcircle[stroke]{\pgfxy(10.00,10.00)}{0.70mm}
\pgfputat{\pgfxy(13.00,9.00)}{\pgfbox[bottom,left]{\fontsize{7.97}{9.56}\selectfont $3$}}
\pgfmoveto{\pgfxy(10.00,20.00)}\pgflineto{\pgfxy(10.00,15.00)}\pgfstroke
\pgfcircle[fill]{\pgfxy(10.00,20.00)}{0.70mm}
\pgfcircle[stroke]{\pgfxy(10.00,20.00)}{0.70mm}
\pgfcircle[fill]{\pgfxy(10.00,15.00)}{0.70mm}
\pgfcircle[stroke]{\pgfxy(10.00,15.00)}{0.70mm}
\pgfmoveto{\pgfxy(10.00,15.00)}\pgflineto{\pgfxy(10.00,10.00)}\pgfstroke
\pgfputat{\pgfxy(13.00,14.00)}{\pgfbox[bottom,left]{\fontsize{7.97}{9.56}\selectfont $8$}}
\pgfputat{\pgfxy(13.00,19.00)}{\pgfbox[bottom,left]{\fontsize{7.97}{9.56}\selectfont $9$}}
\end{pgfpicture}%
$ &
$
\centering
\begin{pgfpicture}{-2.00mm}{0.20mm}{12.50mm}{17.50mm}
\pgfsetxvec{\pgfpoint{0.70mm}{0mm}}
\pgfsetyvec{\pgfpoint{0mm}{0.70mm}}
\color[rgb]{0,0,0}\pgfsetlinewidth{0.30mm}\pgfsetdash{}{0mm}
\pgfcircle[fill]{\pgfxy(10.00,10.00)}{0.70mm}
\pgfcircle[stroke]{\pgfxy(10.00,10.00)}{0.70mm}
\pgfputat{\pgfxy(2.00,4.00)}{\pgfbox[bottom,left]{\fontsize{7.97}{9.56}\selectfont \makebox[0pt][r]{$1$}}}
\pgfmoveto{\pgfxy(10.00,20.00)}\pgflineto{\pgfxy(10.00,15.00)}\pgfstroke
\pgfcircle[fill]{\pgfxy(10.00,20.00)}{0.70mm}
\pgfcircle[stroke]{\pgfxy(10.00,20.00)}{0.70mm}
\pgfcircle[fill]{\pgfxy(10.00,15.00)}{0.70mm}
\pgfcircle[stroke]{\pgfxy(10.00,15.00)}{0.70mm}
\pgfmoveto{\pgfxy(10.00,15.00)}\pgflineto{\pgfxy(10.00,10.00)}\pgfstroke
\pgfputat{\pgfxy(13.00,19.00)}{\pgfbox[bottom,left]{\fontsize{7.97}{9.56}\selectfont $9$}}
\pgfputat{\pgfxy(13.00,9.00)}{\pgfbox[bottom,left]{\fontsize{7.97}{9.56}\selectfont $3$}}
\pgfcircle[fill]{\pgfxy(5.00,5.00)}{0.70mm}
\pgfcircle[stroke]{\pgfxy(5.00,5.00)}{0.70mm}
\pgfcircle[fill]{\pgfxy(5.00,10.00)}{0.70mm}
\pgfcircle[stroke]{\pgfxy(5.00,10.00)}{0.70mm}
\pgfmoveto{\pgfxy(5.00,5.00)}\pgflineto{\pgfxy(5.00,10.00)}\pgfstroke
\pgfmoveto{\pgfxy(5.00,5.00)}\pgflineto{\pgfxy(10.00,10.00)}\pgfstroke
\pgfputat{\pgfxy(2.00,9.00)}{\pgfbox[bottom,left]{\fontsize{7.97}{9.56}\selectfont \makebox[0pt][r]{$2$}}}
\pgfputat{\pgfxy(13.00,14.00)}{\pgfbox[bottom,left]{\fontsize{7.97}{9.56}\selectfont $8$}}
\end{pgfpicture}%
$ &
$
\centering
\begin{pgfpicture}{-2.00mm}{0.20mm}{12.50mm}{14.00mm}
\pgfsetxvec{\pgfpoint{0.70mm}{0mm}}
\pgfsetyvec{\pgfpoint{0mm}{0.70mm}}
\color[rgb]{0,0,0}\pgfsetlinewidth{0.30mm}\pgfsetdash{}{0mm}
\pgfcircle[fill]{\pgfxy(5.00,15.00)}{0.70mm}
\pgfcircle[stroke]{\pgfxy(5.00,15.00)}{0.70mm}
\pgfputat{\pgfxy(2.00,4.00)}{\pgfbox[bottom,left]{\fontsize{7.97}{9.56}\selectfont \makebox[0pt][r]{$1$}}}
\pgfmoveto{\pgfxy(10.00,15.00)}\pgflineto{\pgfxy(10.00,10.00)}\pgfstroke
\pgfcircle[fill]{\pgfxy(10.00,15.00)}{0.70mm}
\pgfcircle[stroke]{\pgfxy(10.00,15.00)}{0.70mm}
\pgfcircle[fill]{\pgfxy(10.00,10.00)}{0.70mm}
\pgfcircle[stroke]{\pgfxy(10.00,10.00)}{0.70mm}
\pgfmoveto{\pgfxy(5.00,15.00)}\pgflineto{\pgfxy(5.00,9.50)}\pgfstroke
\pgfputat{\pgfxy(13.00,14.00)}{\pgfbox[bottom,left]{\fontsize{7.97}{9.56}\selectfont $9$}}
\pgfputat{\pgfxy(2.00,14.00)}{\pgfbox[bottom,left]{\fontsize{7.97}{9.56}\selectfont \makebox[0pt][r]{$3$}}}
\pgfcircle[fill]{\pgfxy(5.00,5.00)}{0.70mm}
\pgfcircle[stroke]{\pgfxy(5.00,5.00)}{0.70mm}
\pgfcircle[fill]{\pgfxy(5.00,10.00)}{0.70mm}
\pgfcircle[stroke]{\pgfxy(5.00,10.00)}{0.70mm}
\pgfmoveto{\pgfxy(5.00,5.00)}\pgflineto{\pgfxy(5.00,10.00)}\pgfstroke
\pgfmoveto{\pgfxy(5.00,5.00)}\pgflineto{\pgfxy(10.00,10.00)}\pgfstroke
\pgfputat{\pgfxy(2.00,9.00)}{\pgfbox[bottom,left]{\fontsize{7.97}{9.56}\selectfont \makebox[0pt][r]{$2$}}}
\pgfputat{\pgfxy(13.00,9.00)}{\pgfbox[bottom,left]{\fontsize{7.97}{9.56}\selectfont $8$}}
\end{pgfpicture}%
$ &
$
\centering
\begin{pgfpicture}{-2.00mm}{0.20mm}{12.50mm}{14.00mm}
\pgfsetxvec{\pgfpoint{0.70mm}{0mm}}
\pgfsetyvec{\pgfpoint{0mm}{0.70mm}}
\color[rgb]{0,0,0}\pgfsetlinewidth{0.30mm}\pgfsetdash{}{0mm}
\pgfcircle[fill]{\pgfxy(5.00,15.00)}{0.70mm}
\pgfcircle[stroke]{\pgfxy(5.00,15.00)}{0.70mm}
\pgfputat{\pgfxy(2.00,4.00)}{\pgfbox[bottom,left]{\fontsize{7.97}{9.56}\selectfont \makebox[0pt][r]{$1$}}}
\pgfmoveto{\pgfxy(10.00,15.00)}\pgflineto{\pgfxy(10.00,10.00)}\pgfstroke
\pgfcircle[fill]{\pgfxy(10.00,15.00)}{0.70mm}
\pgfcircle[stroke]{\pgfxy(10.00,15.00)}{0.70mm}
\pgfcircle[fill]{\pgfxy(10.00,10.00)}{0.70mm}
\pgfcircle[stroke]{\pgfxy(10.00,10.00)}{0.70mm}
\pgfmoveto{\pgfxy(5.00,15.00)}\pgflineto{\pgfxy(5.00,9.50)}\pgfstroke
\pgfputat{\pgfxy(13.00,14.00)}{\pgfbox[bottom,left]{\fontsize{7.97}{9.56}\selectfont $9$}}
\pgfputat{\pgfxy(2.00,14.00)}{\pgfbox[bottom,left]{\fontsize{7.97}{9.56}\selectfont \makebox[0pt][r]{$8$}}}
\pgfcircle[fill]{\pgfxy(5.00,5.00)}{0.70mm}
\pgfcircle[stroke]{\pgfxy(5.00,5.00)}{0.70mm}
\pgfcircle[fill]{\pgfxy(5.00,10.00)}{0.70mm}
\pgfcircle[stroke]{\pgfxy(5.00,10.00)}{0.70mm}
\pgfmoveto{\pgfxy(5.00,5.00)}\pgflineto{\pgfxy(5.00,10.00)}\pgfstroke
\pgfmoveto{\pgfxy(5.00,5.00)}\pgflineto{\pgfxy(10.00,10.00)}\pgfstroke
\pgfputat{\pgfxy(2.00,9.00)}{\pgfbox[bottom,left]{\fontsize{7.97}{9.56}\selectfont \makebox[0pt][r]{$2$}}}
\pgfputat{\pgfxy(13.00,9.00)}{\pgfbox[bottom,left]{\fontsize{7.97}{9.56}\selectfont $3$}}
\end{pgfpicture}%
$
\\
\hline
$\textsc{Leaf}(T_{m})$ & 9 & 8 & 9 & 2 & 3 & 8
\\
\hline
\hline
$\pi^{(m)}$ & $91823$ & $7691823$ & $8691723$ & $548691723$ & $648591723$ & $748591623$
\\
\hline
$\textsc{First}(\pi^{(m)})$ & 9 & 7 & 8 & 5 & 6 & 7
\\
\hline
$\Delta_m$\rule[-7pt]{0pt}{20pt} & $(9,8)$ & $(7,6)^{*} $ & $(8,7)$ & $(5,4)^{*} $ & $(6,5)$ & $(7,6)$
\\
\hline
$
\centering
\begin{pgfpicture}{-2.00mm}{-2.01mm}{11.51mm}{15.98mm}
\pgfsetxvec{\pgfpoint{0.70mm}{0mm}}
\pgfsetyvec{\pgfpoint{0mm}{0.70mm}}
\color[rgb]{0,0,0}\pgfsetlinewidth{0.30mm}\pgfsetdash{}{0mm}
\pgfputat{\pgfxy(7.00,9.00)}{\pgfbox[bottom,left]{\fontsize{11}{9.56}\selectfont \makebox[0pt]{$T_{m}=\Phi(\pi^{(m)})$}}}
\end{pgfpicture}%
$ &
$
\centering
\begin{pgfpicture}{-2.00mm}{0.20mm}{12.50mm}{14.00mm}
\pgfsetxvec{\pgfpoint{0.70mm}{0mm}}
\pgfsetyvec{\pgfpoint{0mm}{0.70mm}}
\color[rgb]{0,0,0}\pgfsetlinewidth{0.30mm}\pgfsetdash{}{0mm}
\pgfcircle[fill]{\pgfxy(5.00,15.00)}{0.70mm}
\pgfcircle[stroke]{\pgfxy(5.00,15.00)}{0.70mm}
\pgfputat{\pgfxy(2.00,4.00)}{\pgfbox[bottom,left]{\fontsize{7.97}{9.56}\selectfont \makebox[0pt][r]{$1$}}}
\pgfmoveto{\pgfxy(10.00,15.00)}\pgflineto{\pgfxy(10.00,10.00)}\pgfstroke
\pgfcircle[fill]{\pgfxy(10.00,15.00)}{0.70mm}
\pgfcircle[stroke]{\pgfxy(10.00,15.00)}{0.70mm}
\pgfcircle[fill]{\pgfxy(10.00,10.00)}{0.70mm}
\pgfcircle[stroke]{\pgfxy(10.00,10.00)}{0.70mm}
\pgfmoveto{\pgfxy(5.00,15.00)}\pgflineto{\pgfxy(5.00,9.50)}\pgfstroke
\pgfputat{\pgfxy(13.00,14.00)}{\pgfbox[bottom,left]{\fontsize{7.97}{9.56}\selectfont $8$}}
\pgfputat{\pgfxy(2.00,14.00)}{\pgfbox[bottom,left]{\fontsize{7.97}{9.56}\selectfont \makebox[0pt][r]{$9$}}}
\pgfcircle[fill]{\pgfxy(5.00,5.00)}{0.70mm}
\pgfcircle[stroke]{\pgfxy(5.00,5.00)}{0.70mm}
\pgfcircle[fill]{\pgfxy(5.00,10.00)}{0.70mm}
\pgfcircle[stroke]{\pgfxy(5.00,10.00)}{0.70mm}
\pgfmoveto{\pgfxy(5.00,5.00)}\pgflineto{\pgfxy(5.00,10.00)}\pgfstroke
\pgfmoveto{\pgfxy(5.00,5.00)}\pgflineto{\pgfxy(10.00,10.00)}\pgfstroke
\pgfputat{\pgfxy(2.00,9.00)}{\pgfbox[bottom,left]{\fontsize{7.97}{9.56}\selectfont \makebox[0pt][r]{$2$}}}
\pgfputat{\pgfxy(13.00,9.00)}{\pgfbox[bottom,left]{\fontsize{7.97}{9.56}\selectfont $3$}}
\end{pgfpicture}%
$ &
$
\centering
\begin{pgfpicture}{-2.00mm}{0.20mm}{12.50mm}{17.50mm}
\pgfsetxvec{\pgfpoint{0.70mm}{0mm}}
\pgfsetyvec{\pgfpoint{0mm}{0.70mm}}
\color[rgb]{0,0,0}\pgfsetlinewidth{0.30mm}\pgfsetdash{}{0mm}
\pgfcircle[fill]{\pgfxy(5.00,15.00)}{0.70mm}
\pgfcircle[stroke]{\pgfxy(5.00,15.00)}{0.70mm}
\pgfputat{\pgfxy(2.00,4.00)}{\pgfbox[bottom,left]{\fontsize{7.97}{9.56}\selectfont \makebox[0pt][r]{$1$}}}
\pgfmoveto{\pgfxy(10.00,15.00)}\pgflineto{\pgfxy(10.00,10.00)}\pgfstroke
\pgfcircle[fill]{\pgfxy(10.00,15.00)}{0.70mm}
\pgfcircle[stroke]{\pgfxy(10.00,15.00)}{0.70mm}
\pgfcircle[fill]{\pgfxy(10.00,10.00)}{0.70mm}
\pgfcircle[stroke]{\pgfxy(10.00,10.00)}{0.70mm}
\pgfmoveto{\pgfxy(5.00,15.00)}\pgflineto{\pgfxy(5.00,9.50)}\pgfstroke
\pgfputat{\pgfxy(2.00,14.00)}{\pgfbox[bottom,left]{\fontsize{7.97}{9.56}\selectfont \makebox[0pt][r]{$6$}}}
\pgfputat{\pgfxy(2.00,19.00)}{\pgfbox[bottom,left]{\fontsize{7.97}{9.56}\selectfont \makebox[0pt][r]{$7$}}}
\pgfcircle[fill]{\pgfxy(5.00,5.00)}{0.70mm}
\pgfcircle[stroke]{\pgfxy(5.00,5.00)}{0.70mm}
\pgfcircle[fill]{\pgfxy(5.00,10.00)}{0.70mm}
\pgfcircle[stroke]{\pgfxy(5.00,10.00)}{0.70mm}
\pgfmoveto{\pgfxy(5.00,5.00)}\pgflineto{\pgfxy(5.00,10.00)}\pgfstroke
\pgfmoveto{\pgfxy(5.00,5.00)}\pgflineto{\pgfxy(10.00,10.00)}\pgfstroke
\pgfputat{\pgfxy(2.00,9.00)}{\pgfbox[bottom,left]{\fontsize{7.97}{9.56}\selectfont \makebox[0pt][r]{$2$}}}
\pgfputat{\pgfxy(13.00,9.00)}{\pgfbox[bottom,left]{\fontsize{7.97}{9.56}\selectfont $3$}}
\pgfcircle[fill]{\pgfxy(10.00,20.00)}{0.70mm}
\pgfcircle[stroke]{\pgfxy(10.00,20.00)}{0.70mm}
\pgfmoveto{\pgfxy(5.00,20.00)}\pgflineto{\pgfxy(5.00,15.00)}\pgfstroke
\pgfmoveto{\pgfxy(5.00,15.00)}\pgflineto{\pgfxy(10.00,20.00)}\pgfstroke
\pgfcircle[fill]{\pgfxy(5.00,20.00)}{0.70mm}
\pgfcircle[stroke]{\pgfxy(5.00,20.00)}{0.70mm}
\pgfputat{\pgfxy(13.00,14.00)}{\pgfbox[bottom,left]{\fontsize{7.97}{9.56}\selectfont $8$}}
\pgfputat{\pgfxy(13.00,19.00)}{\pgfbox[bottom,left]{\fontsize{7.97}{9.56}\selectfont $9$}}
\end{pgfpicture}%
$ &
$
\centering
\begin{pgfpicture}{-2.00mm}{0.20mm}{12.50mm}{17.50mm}
\pgfsetxvec{\pgfpoint{0.70mm}{0mm}}
\pgfsetyvec{\pgfpoint{0mm}{0.70mm}}
\color[rgb]{0,0,0}\pgfsetlinewidth{0.30mm}\pgfsetdash{}{0mm}
\pgfcircle[fill]{\pgfxy(5.00,15.00)}{0.70mm}
\pgfcircle[stroke]{\pgfxy(5.00,15.00)}{0.70mm}
\pgfputat{\pgfxy(2.00,4.00)}{\pgfbox[bottom,left]{\fontsize{7.97}{9.56}\selectfont \makebox[0pt][r]{$1$}}}
\pgfmoveto{\pgfxy(10.00,15.00)}\pgflineto{\pgfxy(10.00,10.00)}\pgfstroke
\pgfcircle[fill]{\pgfxy(10.00,15.00)}{0.70mm}
\pgfcircle[stroke]{\pgfxy(10.00,15.00)}{0.70mm}
\pgfcircle[fill]{\pgfxy(10.00,10.00)}{0.70mm}
\pgfcircle[stroke]{\pgfxy(10.00,10.00)}{0.70mm}
\pgfmoveto{\pgfxy(5.00,15.00)}\pgflineto{\pgfxy(5.00,9.50)}\pgfstroke
\pgfputat{\pgfxy(2.00,14.00)}{\pgfbox[bottom,left]{\fontsize{7.97}{9.56}\selectfont \makebox[0pt][r]{$6$}}}
\pgfputat{\pgfxy(2.00,19.00)}{\pgfbox[bottom,left]{\fontsize{7.97}{9.56}\selectfont \makebox[0pt][r]{$8$}}}
\pgfcircle[fill]{\pgfxy(5.00,5.00)}{0.70mm}
\pgfcircle[stroke]{\pgfxy(5.00,5.00)}{0.70mm}
\pgfcircle[fill]{\pgfxy(5.00,10.00)}{0.70mm}
\pgfcircle[stroke]{\pgfxy(5.00,10.00)}{0.70mm}
\pgfmoveto{\pgfxy(5.00,5.00)}\pgflineto{\pgfxy(5.00,10.00)}\pgfstroke
\pgfmoveto{\pgfxy(5.00,5.00)}\pgflineto{\pgfxy(10.00,10.00)}\pgfstroke
\pgfputat{\pgfxy(2.00,9.00)}{\pgfbox[bottom,left]{\fontsize{7.97}{9.56}\selectfont \makebox[0pt][r]{$2$}}}
\pgfputat{\pgfxy(13.00,9.00)}{\pgfbox[bottom,left]{\fontsize{7.97}{9.56}\selectfont $3$}}
\pgfcircle[fill]{\pgfxy(10.00,20.00)}{0.70mm}
\pgfcircle[stroke]{\pgfxy(10.00,20.00)}{0.70mm}
\pgfmoveto{\pgfxy(5.00,20.00)}\pgflineto{\pgfxy(5.00,15.00)}\pgfstroke
\pgfmoveto{\pgfxy(5.00,15.00)}\pgflineto{\pgfxy(10.00,20.00)}\pgfstroke
\pgfcircle[fill]{\pgfxy(5.00,20.00)}{0.70mm}
\pgfcircle[stroke]{\pgfxy(5.00,20.00)}{0.70mm}
\pgfputat{\pgfxy(13.00,14.00)}{\pgfbox[bottom,left]{\fontsize{7.97}{9.56}\selectfont $7$}}
\pgfputat{\pgfxy(13.00,19.00)}{\pgfbox[bottom,left]{\fontsize{7.97}{9.56}\selectfont $9$}}
\end{pgfpicture}%
$ &
$
\centering
\begin{pgfpicture}{-2.00mm}{0.20mm}{16.00mm}{21.00mm}
\pgfsetxvec{\pgfpoint{0.70mm}{0mm}}
\pgfsetyvec{\pgfpoint{0mm}{0.70mm}}
\color[rgb]{0,0,0}\pgfsetlinewidth{0.30mm}\pgfsetdash{}{0mm}
\pgfputat{\pgfxy(2.00,4.00)}{\pgfbox[bottom,left]{\fontsize{7.97}{9.56}\selectfont \makebox[0pt][r]{$1$}}}
\pgfmoveto{\pgfxy(10.00,15.00)}\pgflineto{\pgfxy(10.00,10.00)}\pgfstroke
\pgfcircle[fill]{\pgfxy(10.00,15.00)}{0.70mm}
\pgfcircle[stroke]{\pgfxy(10.00,15.00)}{0.70mm}
\pgfcircle[fill]{\pgfxy(10.00,10.00)}{0.70mm}
\pgfcircle[stroke]{\pgfxy(10.00,10.00)}{0.70mm}
\pgfmoveto{\pgfxy(5.00,15.00)}\pgflineto{\pgfxy(5.00,9.50)}\pgfstroke
\pgfcircle[fill]{\pgfxy(5.00,5.00)}{0.70mm}
\pgfcircle[stroke]{\pgfxy(5.00,5.00)}{0.70mm}
\pgfcircle[fill]{\pgfxy(5.00,10.00)}{0.70mm}
\pgfcircle[stroke]{\pgfxy(5.00,10.00)}{0.70mm}
\pgfmoveto{\pgfxy(5.00,5.00)}\pgflineto{\pgfxy(5.00,10.00)}\pgfstroke
\pgfmoveto{\pgfxy(5.00,5.00)}\pgflineto{\pgfxy(10.00,10.00)}\pgfstroke
\pgfputat{\pgfxy(2.00,9.00)}{\pgfbox[bottom,left]{\fontsize{7.97}{9.56}\selectfont \makebox[0pt][r]{$2$}}}
\pgfputat{\pgfxy(13.00,9.00)}{\pgfbox[bottom,left]{\fontsize{7.97}{9.56}\selectfont $3$}}
\pgfputat{\pgfxy(13.00,14.00)}{\pgfbox[bottom,left]{\fontsize{7.97}{9.56}\selectfont $7$}}
\pgfcircle[fill]{\pgfxy(10.00,20.00)}{0.70mm}
\pgfcircle[stroke]{\pgfxy(10.00,20.00)}{0.70mm}
\pgfputat{\pgfxy(13.00,19.00)}{\pgfbox[bottom,left]{\fontsize{7.97}{9.56}\selectfont $6$}}
\pgfputat{\pgfxy(7.00,24.00)}{\pgfbox[bottom,left]{\fontsize{7.97}{9.56}\selectfont \makebox[0pt][r]{$8$}}}
\pgfcircle[fill]{\pgfxy(15.00,25.00)}{0.70mm}
\pgfcircle[stroke]{\pgfxy(15.00,25.00)}{0.70mm}
\pgfmoveto{\pgfxy(10.00,25.00)}\pgflineto{\pgfxy(10.00,20.00)}\pgfstroke
\pgfmoveto{\pgfxy(10.00,20.00)}\pgflineto{\pgfxy(15.00,25.00)}\pgfstroke
\pgfcircle[fill]{\pgfxy(10.00,25.00)}{0.70mm}
\pgfcircle[stroke]{\pgfxy(10.00,25.00)}{0.70mm}
\pgfputat{\pgfxy(18.00,24.00)}{\pgfbox[bottom,left]{\fontsize{7.97}{9.56}\selectfont $9$}}
\pgfcircle[fill]{\pgfxy(5.00,15.00)}{0.70mm}
\pgfcircle[stroke]{\pgfxy(5.00,15.00)}{0.70mm}
\pgfcircle[fill]{\pgfxy(5.00,20.00)}{0.70mm}
\pgfcircle[stroke]{\pgfxy(5.00,20.00)}{0.70mm}
\pgfmoveto{\pgfxy(10.00,20.00)}\pgflineto{\pgfxy(5.00,15.00)}\pgfstroke
\pgfmoveto{\pgfxy(5.00,20.00)}\pgflineto{\pgfxy(5.00,15.00)}\pgfstroke
\pgfputat{\pgfxy(2.00,19.00)}{\pgfbox[bottom,left]{\fontsize{7.97}{9.56}\selectfont \makebox[0pt][r]{$5$}}}
\pgfputat{\pgfxy(2.00,14.00)}{\pgfbox[bottom,left]{\fontsize{7.97}{9.56}\selectfont \makebox[0pt][r]{$4$}}}
\end{pgfpicture}%
$ &
$
\centering
\begin{pgfpicture}{-2.00mm}{0.20mm}{12.50mm}{21.00mm}
\pgfsetxvec{\pgfpoint{0.70mm}{0mm}}
\pgfsetyvec{\pgfpoint{0mm}{0.70mm}}
\color[rgb]{0,0,0}\pgfsetlinewidth{0.30mm}\pgfsetdash{}{0mm}
\pgfputat{\pgfxy(2.00,4.00)}{\pgfbox[bottom,left]{\fontsize{7.97}{9.56}\selectfont \makebox[0pt][r]{$1$}}}
\pgfmoveto{\pgfxy(10.00,15.00)}\pgflineto{\pgfxy(10.00,10.00)}\pgfstroke
\pgfcircle[fill]{\pgfxy(10.00,15.00)}{0.70mm}
\pgfcircle[stroke]{\pgfxy(10.00,15.00)}{0.70mm}
\pgfcircle[fill]{\pgfxy(10.00,10.00)}{0.70mm}
\pgfcircle[stroke]{\pgfxy(10.00,10.00)}{0.70mm}
\pgfmoveto{\pgfxy(5.00,15.00)}\pgflineto{\pgfxy(5.00,9.50)}\pgfstroke
\pgfcircle[fill]{\pgfxy(5.00,5.00)}{0.70mm}
\pgfcircle[stroke]{\pgfxy(5.00,5.00)}{0.70mm}
\pgfcircle[fill]{\pgfxy(5.00,10.00)}{0.70mm}
\pgfcircle[stroke]{\pgfxy(5.00,10.00)}{0.70mm}
\pgfmoveto{\pgfxy(5.00,5.00)}\pgflineto{\pgfxy(5.00,10.00)}\pgfstroke
\pgfmoveto{\pgfxy(5.00,5.00)}\pgflineto{\pgfxy(10.00,10.00)}\pgfstroke
\pgfputat{\pgfxy(2.00,9.00)}{\pgfbox[bottom,left]{\fontsize{7.97}{9.56}\selectfont \makebox[0pt][r]{$2$}}}
\pgfputat{\pgfxy(13.00,9.00)}{\pgfbox[bottom,left]{\fontsize{7.97}{9.56}\selectfont $3$}}
\pgfputat{\pgfxy(13.00,14.00)}{\pgfbox[bottom,left]{\fontsize{7.97}{9.56}\selectfont $7$}}
\pgfcircle[fill]{\pgfxy(10.00,20.00)}{0.70mm}
\pgfcircle[stroke]{\pgfxy(10.00,20.00)}{0.70mm}
\pgfputat{\pgfxy(2.00,24.00)}{\pgfbox[bottom,left]{\fontsize{7.97}{9.56}\selectfont \makebox[0pt][r]{$6$}}}
\pgfputat{\pgfxy(13.00,19.00)}{\pgfbox[bottom,left]{\fontsize{7.97}{9.56}\selectfont $8$}}
\pgfcircle[fill]{\pgfxy(10.00,25.00)}{0.70mm}
\pgfcircle[stroke]{\pgfxy(10.00,25.00)}{0.70mm}
\pgfmoveto{\pgfxy(5.00,25.00)}\pgflineto{\pgfxy(5.00,20.00)}\pgfstroke
\pgfmoveto{\pgfxy(10.00,25.00)}\pgflineto{\pgfxy(5.00,20.00)}\pgfstroke
\pgfcircle[fill]{\pgfxy(5.00,25.00)}{0.70mm}
\pgfcircle[stroke]{\pgfxy(5.00,25.00)}{0.70mm}
\pgfputat{\pgfxy(13.00,24.00)}{\pgfbox[bottom,left]{\fontsize{7.97}{9.56}\selectfont $9$}}
\pgfcircle[fill]{\pgfxy(5.00,15.00)}{0.70mm}
\pgfcircle[stroke]{\pgfxy(5.00,15.00)}{0.70mm}
\pgfcircle[fill]{\pgfxy(5.00,20.00)}{0.70mm}
\pgfcircle[stroke]{\pgfxy(5.00,20.00)}{0.70mm}
\pgfmoveto{\pgfxy(10.00,20.00)}\pgflineto{\pgfxy(5.00,15.00)}\pgfstroke
\pgfmoveto{\pgfxy(5.00,20.00)}\pgflineto{\pgfxy(5.00,15.00)}\pgfstroke
\pgfputat{\pgfxy(2.00,19.00)}{\pgfbox[bottom,left]{\fontsize{7.97}{9.56}\selectfont \makebox[0pt][r]{$5$}}}
\pgfputat{\pgfxy(2.00,14.00)}{\pgfbox[bottom,left]{\fontsize{7.97}{9.56}\selectfont \makebox[0pt][r]{$4$}}}
\end{pgfpicture}%
$ &
$
\centering
\begin{pgfpicture}{-2.00mm}{0.20mm}{12.50mm}{21.00mm}
\pgfsetxvec{\pgfpoint{0.70mm}{0mm}}
\pgfsetyvec{\pgfpoint{0mm}{0.70mm}}
\color[rgb]{0,0,0}\pgfsetlinewidth{0.30mm}\pgfsetdash{}{0mm}
\pgfputat{\pgfxy(2.00,4.00)}{\pgfbox[bottom,left]{\fontsize{7.97}{9.56}\selectfont \makebox[0pt][r]{$1$}}}
\pgfmoveto{\pgfxy(10.00,15.00)}\pgflineto{\pgfxy(10.00,10.00)}\pgfstroke
\pgfcircle[fill]{\pgfxy(10.00,15.00)}{0.70mm}
\pgfcircle[stroke]{\pgfxy(10.00,15.00)}{0.70mm}
\pgfcircle[fill]{\pgfxy(10.00,10.00)}{0.70mm}
\pgfcircle[stroke]{\pgfxy(10.00,10.00)}{0.70mm}
\pgfmoveto{\pgfxy(5.00,15.00)}\pgflineto{\pgfxy(5.00,9.50)}\pgfstroke
\pgfcircle[fill]{\pgfxy(5.00,5.00)}{0.70mm}
\pgfcircle[stroke]{\pgfxy(5.00,5.00)}{0.70mm}
\pgfcircle[fill]{\pgfxy(5.00,10.00)}{0.70mm}
\pgfcircle[stroke]{\pgfxy(5.00,10.00)}{0.70mm}
\pgfmoveto{\pgfxy(5.00,5.00)}\pgflineto{\pgfxy(5.00,10.00)}\pgfstroke
\pgfmoveto{\pgfxy(5.00,5.00)}\pgflineto{\pgfxy(10.00,10.00)}\pgfstroke
\pgfputat{\pgfxy(2.00,9.00)}{\pgfbox[bottom,left]{\fontsize{7.97}{9.56}\selectfont \makebox[0pt][r]{$2$}}}
\pgfputat{\pgfxy(13.00,9.00)}{\pgfbox[bottom,left]{\fontsize{7.97}{9.56}\selectfont $3$}}
\pgfputat{\pgfxy(13.00,14.00)}{\pgfbox[bottom,left]{\fontsize{7.97}{9.56}\selectfont $6$}}
\pgfcircle[fill]{\pgfxy(10.00,20.00)}{0.70mm}
\pgfcircle[stroke]{\pgfxy(10.00,20.00)}{0.70mm}
\pgfputat{\pgfxy(2.00,24.00)}{\pgfbox[bottom,left]{\fontsize{7.97}{9.56}\selectfont \makebox[0pt][r]{$7$}}}
\pgfputat{\pgfxy(13.00,19.00)}{\pgfbox[bottom,left]{\fontsize{7.97}{9.56}\selectfont $8$}}
\pgfcircle[fill]{\pgfxy(10.00,25.00)}{0.70mm}
\pgfcircle[stroke]{\pgfxy(10.00,25.00)}{0.70mm}
\pgfmoveto{\pgfxy(5.00,25.00)}\pgflineto{\pgfxy(5.00,20.00)}\pgfstroke
\pgfmoveto{\pgfxy(10.00,25.00)}\pgflineto{\pgfxy(5.00,20.00)}\pgfstroke
\pgfcircle[fill]{\pgfxy(5.00,25.00)}{0.70mm}
\pgfcircle[stroke]{\pgfxy(5.00,25.00)}{0.70mm}
\pgfputat{\pgfxy(13.00,24.00)}{\pgfbox[bottom,left]{\fontsize{7.97}{9.56}\selectfont $9$}}
\pgfcircle[fill]{\pgfxy(5.00,15.00)}{0.70mm}
\pgfcircle[stroke]{\pgfxy(5.00,15.00)}{0.70mm}
\pgfcircle[fill]{\pgfxy(5.00,20.00)}{0.70mm}
\pgfcircle[stroke]{\pgfxy(5.00,20.00)}{0.70mm}
\pgfmoveto{\pgfxy(10.00,20.00)}\pgflineto{\pgfxy(5.00,15.00)}\pgfstroke
\pgfmoveto{\pgfxy(5.00,20.00)}\pgflineto{\pgfxy(5.00,15.00)}\pgfstroke
\pgfputat{\pgfxy(2.00,19.00)}{\pgfbox[bottom,left]{\fontsize{7.97}{9.56}\selectfont \makebox[0pt][r]{$5$}}}
\pgfputat{\pgfxy(2.00,14.00)}{\pgfbox[bottom,left]{\fontsize{7.97}{9.56}\selectfont \makebox[0pt][r]{$4$}}}
\end{pgfpicture}%
$
\\
\hline
$\textsc{Leaf}(T_{m})$ & 9 & 7 & 8 & 5 & 6 & 7
\\
\end{tabular}
\caption{\label{fig1} The construction of the tree $\Psi(7\,4\,8\,5\,9\,1\,6\,2\,3)$}
\end{figure}

\end{ex}

\subsection{Interpretation of Entringer's formula  in $\Tn$} \label{s-rmq}

Following the interpretation of \eqref{eq1} in $\Dn$ (cf Remark~\ref{s-rmq2}) and the bijection $\varphi$, we must consider the decomposition of the set $\Tnk$. The first step in the construction of $\varphi^{-1}$ would consist in either removing the elements $k-1$ and $k$, or the first step transforms the tree to obtain another tree of $\Tn$.

For $T$ an element of $\Tnk$, we say that the edge $(k-1,k)$ is \emph{removable} if  $k-1$ is the parent of $k$ and if $k-1$ has another child $m$ that is not greater than the sibling of $k-1$ (if such a sibling exists). For a visual representation, a tree $T$ has its edge $(k-1,k)$ removable if it corresponds to the case A-1 in the proof of Theorem \ref{thmvarphi}.

If the edge $(k-1,k)$ is not removable, the tree obtained after the first operation in the construction of $\varphi^{-1}$ will be an increasing tree with $n$ elements such that $k-1$ is the leaf of the main chain. Then, there are exactly $E_{n,k-1}$ trees such that the edge $(k-1,k)$ is not removable.

If the edge is removable, the tree obtained with the first operation in the construction of $\varphi^{-1}$ will be an increasing tree with $n-2$ elements (without the elements $k-1$ and $k$), and the end of the minimal path must be an element $i$ greater than the $k-2$ first elements. Thus, there are $E_{n-2,k-1}+E_{n-2,k}+\cdots + E_{n-2,n-2}=E_{n-1,n-k+1}$  increasing trees such that the edge $(k-1,k)$ is removable.

Finally, an interpretation of \eqref{eq1} appears in the model of $\mathcal{T}_n$. The decomposition according to the removability of the edge $(k-1,k)$ in $T \in \Tnk$ gives \eqref{eq1}.

\section{Poupard's other Entringer families}
\label{sec-poupard}

\subsection{Another interpretation in increasing trees}
\label{s-trees}
Let $\Tnk'$ be  the set of trees $T \in \Tn$ such that the parent of $n$ in $T$  is $k-1$.  By using recurrence relations Poupard proved that $E_{n,k}$ is also the number of trees in $\Tnk'$. A bijection $\varphi'$ between $\Tnk$ and $\Tnk'$ was given in \cite[\S 6]{KPP94} for a more general class of increasing trees that they call geometric.


\subsection{Another interpretation in down-up permutations} \label{s-theta}

If $\pi$ is a permutation of $\mathcal{DU}_{n,k}$, define $\theta(\pi)$ as follows:
\begin{itemize}
\item[$\bullet$] if $k < n-k+1+\pi_2$, then $\theta(\pi) = (n-k+1+\pi_2,n-k+\pi_2,\ldots,k+1,k) \circ \pi$,
\item[$\bullet$] if $k > n-k+1+\pi_2$, then $\theta(\pi) = (n-k+1+\pi_2,n-k+2+\pi_2,\ldots,k-1,k) \circ \pi$.
\end{itemize}

Since $\pi$ is down-up, $\pi_2 < k = \pi_1$. If $k < n-k+1+\pi_2$, $\pi_2$ is unchanged by the cycle and then $\sigma(\pi)_2=\pi_2$. Thus $\sigma(\pi)_2 < k < n-k+1+\pi_2 = \sigma(\pi)_1$ and $\theta(\pi)$ is still down-up. If $k > n-k+1+\pi_2$, since $k \leq n$, then $n-k+1+\pi_2 \geq \pi_2+1$, so $\pi_2$ is unchanged by the cycle, $\sigma(\pi)_1 = n-k+1+\pi_2 > \pi_2 = \sigma(\pi)_2$ and $\theta(\pi)$ is still down-up.

Let's denote by $\mathcal{DU}_{n,k}'$ the set of permutations $\pi \in \mathcal{DU}_n$ such that $\pi_1-\pi_2=n+1-k$.

\begin{thm} \label{thm6} For all $n \geq 1$ and $k \in [n]$, the mapping $\theta$ is a bijection from $\Ank$ to $\Ank'$. Moreover, for every $\pi \in \Ank$, 
we have $\theta(\pi)_2=\pi_2$.
\end{thm}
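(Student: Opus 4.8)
The plan is to establish that $\theta$ is a well-defined map $\Ank\to\Ank'$, then exhibit an explicit inverse, and finally check the fixed-point property $\theta(\pi)_2=\pi_2$ (which is in fact already essentially verified in the paragraph preceding the statement). First I would remark that $\theta$ is obtained by composing $\pi$ on the left with a cyclic permutation of an interval of consecutive integers, one of whose endpoints is $k=\pi_1$ and the other is $n-k+1+\pi_2$; since $\pi_2<k$ always holds, one checks case-by-case (as the preamble does) that $\pi_2$ lies outside this interval, hence is fixed, while $\pi_1=k$ is sent to $n-k+1+\pi_2$. Therefore $\theta(\pi)_1=n-k+1+\pi_2=n+1-k+\theta(\pi)_2$, so $\theta(\pi)_1-\theta(\pi)_2=n+1-k$, which is exactly the defining condition of $\Ank'$; and $\theta(\pi)_1>\theta(\pi)_2$ with the rest of the one-line pattern untouched, so $\theta(\pi)\in\mathcal{DU}_n$. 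This shows $\theta(\Ank)\subseteq\Ank'$ and simultaneously gives $\theta(\pi)_2=\pi_2$.

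Next I would construct the inverse. Given $\sigma\in\Ank'$, so $\sigma_1-\sigma_2=n+1-k$, set $j:=\sigma_2$; then $\sigma_1=n+1-k+j$, and I want to recover the unique $\pi\in\Ank$ with $\theta(\pi)=\sigma$. The natural candidate is to apply the \emph{inverse} cycle on the same interval, i.e. define $\theta^{-1}(\sigma)$ by left-composition with the cycle on the interval between $k$ and $n-k+1+j$ that moves $n-k+1+j$ back to $k$. Concretely, if $k< n-k+1+j$ one composes $\sigma$ with $(k,k+1,\ldots,n-k+j,n-k+1+j)$, and if $k> n-k+1+j$ with $(k,k-1,\ldots,n-k+2+j,n-k+1+j)$; in either case $j=\sigma_2$ is fixed (same interval argument), $\sigma_1=n-k+1+j$ is sent to $k$, and the result is a down-up permutation starting with $k$, hence in $\Ank$. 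One then verifies $\theta\circ\theta^{-1}=\mathrm{id}$ and $\theta^{-1}\circ\theta=\mathrm{id}$ by observing that in both the forward and backward maps the interval of integers being cyclically permuted is determined solely by the first entry and the (unchanged) second entry, and the two cycles are mutually inverse permutations of that interval.

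The main subtlety — the one step I would be most careful about — is checking that the interval of integers used to define $\theta^{-1}(\sigma)$ is genuinely the same interval that was used when forming $\theta(\pi)=\sigma$, so that the cycles really are inverse to each other. This requires knowing that $\sigma_2=\pi_2$ (already established) and that $k=\pi_1$ is recoverable from $\sigma$: indeed $k=n+1-(\sigma_1-\sigma_2)$, and the endpoints $\{k,\,n-k+1+\sigma_2\}=\{\sigma_1,\,n+1-(\sigma_1-\sigma_2)+\sigma_2\}=\{\sigma_1,\,2\sigma_2-\sigma_1+n+1\}$ coincide with $\{\pi_1,\,n-\pi_1+1+\pi_2\}$. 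One must also double-check the degenerate cases where the interval has length one or two (e.g. $k=n-k+1+\pi_2$ is excluded by the strict inequalities in the definition, but boundary situations like $\pi_2=k-1$ or the cycle touching the value $n$ deserve a sentence), and confirm that none of the transpositions in the cycle disturb entries at odd/even positions in a way that would break the down-up pattern — here the key point is that a cyclic shift of an interval of values changes only the positions occupied by those values, and by construction only $\pi_1$ among them sits at position $1$, everything else being merely relabelled among themselves without altering relative order beyond what is needed. Once these checks are in place, $\theta$ is a bijection $\Ank\to\Ank'$ with $\theta(\pi)_2=\pi_2$, and since $|\Ank|=E_{n,k}$ this also reproves that $(\Ank')_{1\le k\le n}$ is an Entringer family.
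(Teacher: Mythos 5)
Your proposal is correct and follows essentially the same route as the paper: the published proof likewise consists of writing down the explicit inverse cycles (identical to yours) and relying on the observation, made in the paragraph preceding the theorem, that $\pi_2$ lies outside the cycled interval and is therefore fixed, so that $\theta(\pi)_1-\theta(\pi)_2=n+1-k$. Your extra remark that the cycle acts as an order-preserving relabelling on $[n]\setminus\{k\}$, so that the entire down-up pattern (not merely the first descent) is preserved, makes explicit a point the paper glosses over; the only blemish is the harmless arithmetic slip $2\sigma_2-\sigma_1+n+1$, which should read $\sigma_2-\sigma_1+n+1$.
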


\begin{proof} By construction,  the mapping $\theta$ is clearly invertible. Moreover,  for  $\sigma \in \An$ with $\sigma_1-\sigma_2=n-k+1$,
\begin{itemize}
\item[$\bullet$] if $k < n-k+1+\sigma_2$, then $\theta^{-1}(\sigma) = (k,k+1,\ldots,n-k+\sigma_2,n-k+1+\sigma_2) \circ \sigma$,
\item[$\bullet$] if $k > n-k+1+\sigma_2$, then $\theta^{-1}(\sigma) = (k,k-1,\ldots,n-k+2+\sigma_2,n-k+1+\sigma_2) \circ \sigma$,
\end{itemize} 
thus  $\theta^{-1}(\sigma) \in \Ank$.
\end{proof}

With Theorem~\ref{thm6}, the following interpretation of Poupard, proved in \cite{Pou97} by recurrence relations, can be recovered.

\begin{cor} The sequence $(\mathcal{DU}_{n,k}')_{1 \leq k \leq n}$ is an Entringer family. \end{cor}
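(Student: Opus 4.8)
The plan is to deduce this corollary directly from Theorem~\ref{thm6} together with Entringer's original result. Recall that by definition $\Ank$ has cardinality $E_{n,k}$, and Theorem~\ref{thm6} asserts that $\theta$ is a bijection from $\Ank$ onto $\mathcal{DU}_{n,k}'$. Since a bijection preserves cardinality, we get immediately that $\abs{\mathcal{DU}_{n,k}'} = \abs{\Ank} = E_{n,k}$ for every $n \geq 1$ and $k \in [n]$. That is precisely the statement that $(\mathcal{DU}_{n,k}')_{1 \leq k \leq n}$ is an Entringer family, according to the definition given just after Theorem~\ref{Ent1}.

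Concretely, the proof would read: \emph{By Theorem~\ref{thm6}, the map $\theta \colon \Ank \to \mathcal{DU}_{n,k}'$ is a bijection, so $\abs{\mathcal{DU}_{n,k}'} = \abs{\Ank} = E_{n,k}$ for all $1 \leq k \leq n$. Hence $(\mathcal{DU}_{n,k}')_{1 \leq k \leq n}$ is an Entringer family.} The only subtlety worth flagging is that one should check that the sets $\mathcal{DU}_{n,k}'$, as $k$ ranges over $[n]$, genuinely partition $\mathcal{DU}_n$ in the expected way — i.e. that the defining condition $\pi_1 - \pi_2 = n+1-k$ indeed selects $k \in [n]$ and that every down-up permutation of $[n]$ falls into exactly one such class — but this is automatic since for $\pi \in \mathcal{DU}_n$ we have $1 \leq \pi_2 < \pi_1 \leq n$, forcing $1 \leq \pi_1 - \pi_2 \leq n-1$, so $k = n+1-(\pi_1-\pi_2)$ lies in $\{2,\dots,n\}$; the case $k=1$ corresponds to the empty constraint (consistent with $E_{n,1}=0$ for $n\geq 2$ and $E_{1,1}=1$).

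There is essentially no obstacle here: this corollary is a one-line consequence of the bijection already constructed, and all the real work was done in establishing Theorem~\ref{thm6} (constructing $\theta$ and its inverse, and verifying that $\theta(\pi)$ remains down-up, which is the content of the discussion preceding that theorem). The corollary is stated mainly to make explicit that we have recovered Poupard's interpretation from \cite{Pou97}, previously proved there by recurrence relations, now by an explicit bijection.

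\begin{proof}
By Theorem~\ref{thm6}, for all $n \geq 1$ and $k \in [n]$ the mapping $\theta$ is a bijection from $\Ank$ to $\mathcal{DU}_{n,k}'$. Hence $\abs{\mathcal{DU}_{n,k}'} = \abs{\Ank} = E_{n,k}$, which means that $(\mathcal{DU}_{n,k}')_{1 \leq k \leq n}$ is an Entringer family.
\end{proof}
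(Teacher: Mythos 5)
Your proof is correct and matches the paper's approach exactly: the paper states this corollary as an immediate consequence of Theorem~4.2 (the bijectivity of $\theta$ from $\mathcal{DU}_{n,k}$ to $\mathcal{DU}_{n,k}'$) and gives no further argument. Your additional remark about how the classes $\mathcal{DU}_{n,k}'$ partition $\mathcal{DU}_n$ is a harmless and reasonable sanity check, but not needed for the cardinality conclusion.
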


Since $\mathcal{DU}_{n,k}' \subset \mathcal{DU}_{n}$, we can define $\theta^2(\pi)$ for $\pi \in \mathcal{DU}_n$. Actually, it is easy to see that 
the mapping $\theta$ is  an involution on $\mathcal{DU}_{n}$.
The result can also be generalized with the following observation. For any $\pi \in \An$, define the \emph{complement permutation} $\overline{\pi}$ with $\overline{\pi}_i=n+1-\pi_i$ for $i \in [n]$. Denote by $\An^*$ the set of permutations $\pi$ such that $\overline{\pi}\in \An$.

\begin{cor} For $n \geq 1$, we have
\[ \sum\limits_{\pi \in \An^*} q^{\pi_1} p^{\pi_2-\pi_1} = \sum\limits_{\pi \in \An^*} p^{\pi_1} q^{\pi_2-\pi_1}. \] \end{cor}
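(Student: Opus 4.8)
The plan is to derive the identity from Theorem~\ref{thm6} together with the complementation map $\pi\mapsto\overline{\pi}$ (we may assume $n\ge2$, the case $n=1$ being trivial). First I would change variables by setting $\sigma=\overline{\pi}$; since complementation is an involution, $\pi\mapsto\sigma$ is a bijection of $\An^*$ onto $\An$, and from $\overline{\pi}_i=n+1-\pi_i$ one reads off $\pi_1=n+1-\sigma_1$ and $\pi_2-\pi_1=\sigma_1-\sigma_2$. Hence the asserted equality is equivalent to
\[
\sum_{\sigma\in\An} q^{\,n+1-\sigma_1}\,p^{\,\sigma_1-\sigma_2}=\sum_{\sigma\in\An} p^{\,n+1-\sigma_1}\,q^{\,\sigma_1-\sigma_2},
\]
so it suffices to produce a bijection of $\An$ onto itself interchanging the two statistics $\sigma\mapsto n+1-\sigma_1$ and $\sigma\mapsto\sigma_1-\sigma_2$.

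The claim is that Poupard's map $\theta$ does the job. I would first record that $\theta$ is a bijection of $\An$ onto $\An$: the blocks $\Ank$ ($1\leq k\leq n$) partition $\An$, the blocks $\mathcal{DU}_{n,k}'$ ($1\leq k\leq n$) also partition $\An$ (for $\sigma\in\An$ one has $\sigma_1-\sigma_2\in\{1,\dots,n-1\}$, so $\sigma$ lies in exactly one $\mathcal{DU}_{n,k}'$), and by Theorem~\ref{thm6} the restriction $\theta\colon\Ank\to\mathcal{DU}_{n,k}'$ is a bijection for each $k$; alternatively, one may simply invoke that $\theta$ is an involution on $\An$, as noted just before the statement. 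Next I would verify the interchange: if $\sigma\in\An$ has $\sigma_1=k$ and $\tau=\theta(\sigma)$, then Theorem~\ref{thm6} gives $\tau_2=\sigma_2$ and $\tau_1-\tau_2=n+1-k$, hence $\tau_1=\sigma_2+n+1-k$ and
\[
n+1-\tau_1=k-\sigma_2=\sigma_1-\sigma_2,\qquad \tau_1-\tau_2=n+1-k=n+1-\sigma_1,
\]
so $\theta$ carries the pair $(n+1-\sigma_1,\ \sigma_1-\sigma_2)$ to $(\sigma_1-\sigma_2,\ n+1-\sigma_1)$.

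Finally I would re-index the left-hand sum by $\tau=\theta(\sigma)$, which is legitimate since $\theta$ is a bijection of $\An$: the summand $q^{\,n+1-\sigma_1}p^{\,\sigma_1-\sigma_2}$ becomes $q^{\,\tau_1-\tau_2}p^{\,n+1-\tau_1}=p^{\,n+1-\tau_1}q^{\,\tau_1-\tau_2}$, which is exactly the summand on the right-hand side; summing over $\tau\in\An$ gives the displayed equivalent identity, and therefore the corollary. I do not expect a genuine obstacle once Theorem~\ref{thm6} is available; the only points deserving care are the index bookkeeping in the complement substitution and the fact that $\theta$ must be applied as a bijection of the \emph{entire} set $\An$ rather than only of each block $\Ank$ separately — which is settled by the observation that $\theta$ is an involution on $\An$.
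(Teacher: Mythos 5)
Your proof is correct and follows essentially the same route as the paper: both reduce the identity to the fact that $\theta$, conjugated by complementation, simultaneously swaps the statistics $\pi_1$ and $\pi_2-\pi_1$, using Theorem~\ref{thm6} together with the preservation of the second entry. The only difference is cosmetic — you carry out the computation on $\An$ after substituting $\sigma=\overline{\pi}$, whereas the paper phrases the same swap directly on $\An^*$ via the map $\pi\mapsto\theta(\overline{\pi})$.
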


\begin{proof} The mapping $\pi \mapsto \theta(\overline{\pi})$ is a bijection between $\{ \pi \in \An^*:\pi_1=k\}$ and $\{\pi \in \An^*:\pi_2-\pi_1=k\}$. Thus, the two statistics $\pi_1$ and $\pi_2-\pi_1$ are equidistributed on $\An^*$. Indeed, with proof of Theorem~\ref{thm6}, $\pi \mapsto \theta(\overline{\pi})$ is a bijection between $\{ \pi \in \An^*:\pi_1=k,\pi_2-\pi_1=\ell\}$ and $\{ \pi \in \An^*:\pi_1=\ell,\pi_2-\pi_1=k\}$. Thus, the distribution of the two  statistics is symmetric.
 \end{proof}

\subsection{Interpretations in min-max alternating permutations} \label{s-direct}
Recall that a permutation $\pi$ on $[n]$ is an alternating   permutation if 
$\pi_1 > \pi_2 < \pi_3 > \cdots$  or $\pi_1 < \pi_2 > \pi_3 < \cdots$. 
A \emph{min-max alternating permutation of $[n]$} is an alternating permutation in which $1$ precedes $n$. 
For example, the min-max alternating permutations of $[4]$ are \[ 1 \, 4 \, 2 \, 3, \quad 1 \, 3 \, 2 \, 4, \quad 3 \, 1 \, 4 \, 2, \quad 2 \, 3 \, 1 \, 4, \quad 2 \, 1 \, 4 \, 3. \]
Let $\mathcal{MM}_n$ be the set of \emph{min-max alternating permutations of $[n]$}.
Denote by $\mathcal{MM}_{n,k}$ the set of $\pi \in \mathcal{MM}_n$ such that $|\pi_1-\pi_2|=n+1-k$.
The set $\mathcal{DU}'_{n,k}$ can be split in two disjoint subsets $\mathcal{DU}'_{n,k,1n}$ which is the set of permutations in $\mathcal{DU}'_{n,k} \cap \mathcal{MM}_n$ and $\mathcal{DU}'_{n,k,n1}= \mathcal{DU}'_{n,k} \setminus \mathcal{DU}'_{n,k,1n}$. If $\pi \in \mathcal{DU}'_{n,k,1n}$, define $\beta(\pi)=\pi$, and if $\pi \in \mathcal{DU}'_{n,k,n1}$, define $\beta(\pi)=\overline{\pi}$. Thus $\beta(\pi) \in \mathcal{MM}_n$ and $\beta(\pi)_1-\beta(\pi)_2=-(n+1-k)$.

\begin{thm} For all $n \geq 1$ and $k \in [n]$, the mapping $\beta$ is a bijection between $\Ank'$ and $\mathcal{MM}_{n,k}$. \end{thm}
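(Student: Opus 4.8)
The plan is to exhibit an explicit two-sided inverse of $\beta$. First, recall from the construction that $\beta(\pi)\in\mathcal{MM}_n$ for every $\pi\in\mathcal{DU}'_{n,k}$. Since $\beta(\pi)$ is either $\pi$ or $\overline{\pi}$, and complementation leaves $|\pi_1-\pi_2|$ unchanged, we have $|\beta(\pi)_1-\beta(\pi)_2|=|\pi_1-\pi_2|=n+1-k$, so $\beta$ indeed maps $\mathcal{DU}'_{n,k}$ into $\mathcal{MM}_{n,k}$. It therefore suffices to produce a map $\gamma:\mathcal{MM}_{n,k}\to\mathcal{DU}'_{n,k}$ and check $\gamma\circ\beta=\mathrm{id}$ and $\beta\circ\gamma=\mathrm{id}$.

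The key structural fact (for $n\ge2$; the case $n=1$ is trivial, both sides being $\{1\}$) is the dichotomy that every alternating permutation of $[n]$ is either \emph{down-up} ($\pi_1>\pi_2$) or \emph{up-down} ($\pi_1<\pi_2$), and these are mutually exclusive. Moreover $\pi\mapsto\overline{\pi}$ is an involution that exchanges the two types (because $x>y\iff n+1-x<n+1-y$) and exchanges the property ``$1$ precedes $n$'' with ``$n$ precedes $1$'' (because the positions of the values $1$ and $n$ are interchanged). I would then define $\gamma(\sigma)=\sigma$ when $\sigma\in\mathcal{MM}_{n,k}$ is down-up and $\gamma(\sigma)=\overline{\sigma}$ when $\sigma$ is up-down. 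This is well defined: if $\sigma$ is down-up then $\sigma_1-\sigma_2=|\sigma_1-\sigma_2|=n+1-k$, so $\sigma\in\mathcal{DU}'_{n,k}$; if $\sigma$ is up-down then $\overline{\sigma}$ is down-up with $\overline{\sigma}_1-\overline{\sigma}_2=\sigma_2-\sigma_1=n+1-k$, so $\overline{\sigma}\in\mathcal{DU}'_{n,k}$.

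Finally I would verify that $\gamma$ inverts $\beta$ by tracking the two halves of each set. If $\pi\in\mathcal{DU}'_{n,k,1n}$, then $\pi$ is down-up with $1$ before $n$, so $\beta(\pi)=\pi$ is a down-up element of $\mathcal{MM}_{n,k}$ and $\gamma(\beta(\pi))=\pi$; if $\pi\in\mathcal{DU}'_{n,k,n1}$, then $\overline{\pi}$ is up-down with $1$ before $n$, so $\beta(\pi)=\overline{\pi}$ is an up-down element of $\mathcal{MM}_{n,k}$ and $\gamma(\beta(\pi))=\overline{\overline{\pi}}=\pi$. Conversely, a down-up $\sigma\in\mathcal{MM}_{n,k}$ lies in $\mathcal{DU}'_{n,k}\cap\mathcal{MM}_n=\mathcal{DU}'_{n,k,1n}$, so $\gamma(\sigma)=\sigma$ and $\beta(\gamma(\sigma))=\sigma$; an up-down $\sigma\in\mathcal{MM}_{n,k}$ has $\overline{\sigma}$ down-up with $n$ before $1$, hence $\overline{\sigma}\in\mathcal{DU}'_{n,k,n1}$, so $\gamma(\sigma)=\overline{\sigma}$ and $\beta(\gamma(\sigma))=\overline{\overline{\sigma}}=\sigma$. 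There is no real obstacle here; the only point requiring care is that the down-up/up-down split of $\mathcal{MM}_{n,k}$ corresponds exactly, via complementation, to the $1n$/$n1$ split of $\mathcal{DU}'_{n,k}$, which is precisely the content of the observations in the second paragraph.
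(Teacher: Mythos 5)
Your proof is correct and follows essentially the same route the paper intends: the paper's own justification consists only of the observation preceding the theorem that $\beta(\pi)\in\mathcal{MM}_n$ with $|\beta(\pi)_1-\beta(\pi)_2|=n+1-k$, leaving bijectivity implicit, and your explicit inverse (identity on the down-up part of $\mathcal{MM}_{n,k}$, complementation on the up-down part) together with the matching of the down-up/up-down split to the $1n$/$n1$ split is exactly the verification being left to the reader. No gaps.
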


With the previous theorem, the interpretation of Poupard, proved in \cite{Pou97} by recurrence relations, can be recovered.

\begin{cor} \label{thmpou1}
 The sequence $(\mathcal{MM}_{n,k})_{1 \leq k \leq n}$ is an Entringer family.
\end{cor}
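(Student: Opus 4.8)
The plan is to read the statement off from the bijection $\beta$ together with one fact already in hand, namely the corollary, stated above, that $(\mathcal{DU}'_{n,k})_{1\le k\le n}$ is an Entringer family. Once we know that $\beta$ restricts, for each $k$, to a bijection from $\mathcal{DU}'_{n,k}$ onto $\mathcal{MM}_{n,k}$, we get $|\mathcal{MM}_{n,k}| = |\mathcal{DU}'_{n,k}| = E_{n,k}$ for all $1\le k\le n$, which is exactly the assertion to be proved. So the content to supply is the bijectivity of $\beta$ and its compatibility with the grading by $k$.

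I would establish this from three elementary properties of the complementation involution $\pi\mapsto\overline\pi$ on $\mathcal{DU}_n$: (a) it sends a down-up alternating permutation to an up-down one and conversely; (b) it preserves $|\pi_1-\pi_2|$, since $\overline\pi_1-\overline\pi_2=\pi_2-\pi_1$; and (c) it exchanges the relative order of the letters $1$ and $n$, because the position of $1$ in $\overline\pi$ is the position of $n$ in $\pi$ and vice versa. From (c), for any $\pi\in\mathcal{DU}'_{n,k}$ exactly one of $\pi$ and $\overline\pi$ has $1$ before $n$: if $\pi$ itself does, then $\pi\in\mathcal{DU}'_{n,k,1n}\subseteq\mathcal{MM}_n$ and $\beta(\pi)=\pi$; otherwise $\overline\pi$ does, and by (a)--(b) it is up-down with $|\overline\pi_1-\overline\pi_2|=n+1-k$, so $\overline\pi=\beta(\pi)\in\mathcal{MM}_{n,k}$. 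Hence $\beta$ is well defined and maps $\mathcal{DU}'_{n,k}$ into $\mathcal{MM}_{n,k}$.

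To finish I would exhibit the inverse: for $\sigma\in\mathcal{MM}_{n,k}$ set $\beta^{-1}(\sigma)=\sigma$ when $\sigma$ is down-up and $\beta^{-1}(\sigma)=\overline\sigma$ when $\sigma$ is up-down. In the first case $\sigma_1-\sigma_2=n+1-k>0$ with $1$ before $n$, so $\sigma\in\mathcal{DU}'_{n,k,1n}$ and $\beta(\sigma)=\sigma$; in the second case $\overline\sigma$ is down-up with $\overline\sigma_1-\overline\sigma_2=n+1-k$ and, by (c), with $n$ before $1$, so $\overline\sigma\in\mathcal{DU}'_{n,k,n1}$ and $\beta(\overline\sigma)=\overline{\overline\sigma}=\sigma$. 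Verifying that these assignments are two-sided inverses is then bookkeeping, and the desired equality $|\mathcal{MM}_{n,k}|=E_{n,k}$ follows. I do not expect a genuine obstacle here: the only points needing care are confirming that the $1n$/$n1$ splitting of $\mathcal{DU}'_{n,k}$ matches the down-up/up-down dichotomy after complementation, and that $|\pi_1-\pi_2|$ — not $\pi_1-\pi_2$ — is the correct invariant on the $\mathcal{MM}$ side, both of which are precisely what (b) and (c) deliver.
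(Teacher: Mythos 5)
Your proposal is correct and follows the paper's own route: the corollary is deduced from the bijection $\beta$ between $\mathcal{DU}'_{n,k}$ and $\mathcal{MM}_{n,k}$ together with the already-established fact that $(\mathcal{DU}'_{n,k})$ is an Entringer family. The paper leaves the verification that $\beta$ is a well-defined bijection implicit, and your three properties of complementation supply exactly the bookkeeping it omits.
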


Denote by $\mathcal{MM}_{n,k}'$ the set of $\pi \in \mathcal{MM}_n$ such that the term immediately before $1$ is $k$, if $k \leq n-1$, and by $\mathcal{MM}_{n,n}'$ the set of $\pi \in \mathcal{MM}_n$ such that $\pi_1=1$.

We want to construct a bijection $\rho$ between $\Ank$ and $\mathcal{MM}_{n,k}'$.

If $k=n$, it suffices to define for $\pi \in \mathcal{DU}_{n,n}$, $\rho(\pi)=\overline{\pi}$.
Then, $\rho(\pi) \in \mathcal{MM}_{n,n}'$.

Assume that $k \leq n-1$. The set $\Ank$ can be split in two disjoint subsets $\mathcal{DU}_{n,k,1n}$ which is $\Ank \cap \mathcal{MM}_n$ and $\mathcal{DU}_{n,k,n1}:=\Ank \setminus \mathcal{DU}_{n,k,1n}$.
For an ordered set $I=\{a_1,\ldots,a_n\}$ with $a_1<\cdots<a_n$, denote by $\sigma_I$ the permutation:
\[ \sigma_I=\left(
     \begin{array}{cccc}
       a_1 & a_2 & \cdots & a_n \\
       a_n & a_{n-1} & \cdots & a_1 \\
     \end{array}
   \right) \] Then, for a permutation $\pi=\pi_1\ldots\pi_n$ on the ordered set $I$, denote by $\overline{\pi}$ the \emph{complement permutation on the set $I$}, that is $\overline{\pi}:=\sigma_I \circ \pi$, and $\pi^R$ the \emph{reverse permutation}: $$\pi^R:=\pi_n\pi_{n-1}\ldots\pi_1.$$ Note that when $I=[n]$, the definition of the complement permutation coincides with the one in the Remark of Subsection~\ref{s-theta}.

Then, for a permutation $\pi \in \Ank$,
\begin{itemize}
\item[$\bullet$] If $\pi \in \mathcal{DU}_{n,k,1n}$, we can write $\pi=\sigma_1 \,1 \,\sigma_2$. Then, define $\rho(\pi)=\sigma_1^R \,1\, \sigma_2$. Since $1 < \pi_1 > \pi_2$, $\rho(\pi)$ is still down-up, and the term just before $1$ in $\rho(\pi)$ is $\pi_1=k$.
\item[$\bullet$] If $\pi \in \mathcal{DU}_{n,k,n1}$, we can write $\pi=\sigma_1 \, n \, \sigma_2$. Then, define $\rho(\pi) = \sigma_1^R \, 1 \, \overline{\sigma_2}$. Since $1 < \pi_1 > \pi_2$ and $\overline{\sigma_2}$ is down-up, $\rho(\sigma_1)$ is still down-up, and the term just before $1$ in $\rho(\pi)$ is $\pi_1=k$.
\end{itemize}

\begin{thm} \label{thmpou2}
For all $n \geq 1$ and $k \in [n]$, the mapping $\rho$ is a bijection between $\mathcal{DU}_{n,k}$ and $\mathcal{MM}_{n,k}'$.
\end{thm}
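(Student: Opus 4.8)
The plan is to show that $\rho$ maps $\mathcal{DU}_{n,k}$ into $\mathcal{MM}_{n,k}'$ and to exhibit an explicit inverse; bijectivity follows. The case $k=n$ is immediate: for $\pi\in\mathcal{DU}_{n,n}$ we have $\pi_1=n$, hence $\overline{\pi}_1=1$ and $\overline{\pi}\in\mathcal{MM}_{n,n}'$, and since $\pi\mapsto\overline{\pi}$ is an involution on words over $[n]$ it restricts to a bijection $\mathcal{DU}_{n,n}\to\mathcal{MM}_{n,n}'$. From now on assume $k\le n-1$.

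The first step is to check well-definedness. The crucial elementary observation is that in any down-up permutation of $[n]$ (with $n\ge2$) the letter $1$ occupies an even position, hence a valley, while the letter $n$ occupies an odd position, hence a peak. Consequently, when $\pi\in\mathcal{DU}_{n,k,1n}$ and we write $\pi=\sigma_1\,1\,\sigma_2$, the prefix $\sigma_1$ has odd length, begins with $\pi_1=k$, and ends with a peak; reversing keeps it down-up, so $\sigma_1^R$ begins with a peak and ends with $k$, and therefore $\rho(\pi)=\sigma_1^R\,1\,\sigma_2$ is down-up, has $1$ before $n$ (as $n\in\sigma_2$), and has $k$ as the letter just before $1$; thus $\rho(\pi)\in\mathcal{MM}_{n,k}'$ and is of down-up type. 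When $\pi\in\mathcal{DU}_{n,k,n1}$ and $\pi=\sigma_1\,n\,\sigma_2$, the letter $n$ is a peak, so $\sigma_1$ has even length and $\sigma_2$ is up-down; then $\sigma_1^R$ is up-down ending with the peak $k$, the suitably relabelled complement $\overline{\sigma_2}$ (in which $n$ takes over the role of $1$, its entry set being that of $\sigma_2$ with $1$ removed and $n$ adjoined) is down-up, and $\rho(\pi)=\sigma_1^R\,1\,\overline{\sigma_2}$ is \emph{up-down}, lies in $\mathcal{MM}_n$ with $1$ before $n$, and has $k$ just before $1$; so again $\rho(\pi)\in\mathcal{MM}_{n,k}'$, now of up-down type.

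The second step is to invert $\rho$. Given $\tau\in\mathcal{MM}_{n,k}'$ with $k\le n-1$, the hypothesis that the letter before $1$ equals $k$ yields a unique factorisation $\tau=\gamma\,1\,\beta$ with $\gamma$ ending in $k$, and necessarily $n\in\beta$. If $\tau$ is down-up, set $\rho^{-1}(\tau)=\gamma^R\,1\,\beta$; the same parity argument shows this lies in $\mathcal{DU}_{n,k,1n}$, and the two prefix reversals visibly cancel. If $\tau$ is up-down, set $\rho^{-1}(\tau)=\gamma^R\,n\,\overline{\beta}$, where $\overline{\beta}$ is the complement of $\beta$ in which $1$ resumes the role of $n$; one checks this lies in $\mathcal{DU}_{n,k,n1}$ and that applying the complement twice returns $\sigma_2$. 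Since every element of $\mathcal{MM}_n$ is alternating, it is either down-up or up-down, so the two formulas together cover all of $\mathcal{MM}_{n,k}'$; combined with Step~1 this shows that $\rho$ carries $\mathcal{DU}_{n,k,1n}$ bijectively onto the down-up members of $\mathcal{MM}_{n,k}'$ and $\mathcal{DU}_{n,k,n1}$ bijectively onto the up-down members, whence $\rho$ is a bijection $\mathcal{DU}_{n,k}\to\mathcal{MM}_{n,k}'$.

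The main obstacle I anticipate is the $n1$ case: one must be precise about the complement operation $\overline{\sigma_2}$, whose entry set differs from that of $\sigma_2$ by swapping $1$ for $n$, and verify that performing the two complements in succession (once inside $\rho$, once inside $\rho^{-1}$) returns the original word. This is intertwined with the parity bookkeeping --- that $1$ lands at a valley and $n$ at a peak --- which is precisely what forces the alternating type to flip from down-up to up-down (and back), so that the two halves of the domain match the two halves of the target. Degenerate instances where $\gamma$ or $\beta$ is very short (small $n$) should be checked directly, and running the construction on the permutations of $[4]$ listed above provides a useful sanity check.
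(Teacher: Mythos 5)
Your proof is correct and follows essentially the same route as the paper: well-definedness is checked on the two pieces $\mathcal{DU}_{n,k,1n}$ and $\mathcal{DU}_{n,k,n1}$, and bijectivity is obtained by writing down the explicit inverse, split according to whether the target permutation is down-up or up-down. Your version is in fact more careful than the paper's: you correctly note that the image of $\mathcal{DU}_{n,k,n1}$ consists of \emph{up-down} permutations (the paper's construction loosely asserts ``still down-up'') and you pin down the relabelled complement $\overline{\sigma_2}$, whose entry set swaps $1$ for $n$ --- a point the paper leaves implicit.
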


\begin{proof}
In order to prove that $\rho$ is a bijection, it suffices to describe the inverse of $\rho$. Let $\pi$ be an element in $\mathcal{MM}_n$
such that the term immediately before $1$ is $k$. Following the construction of $\rho$, we have:
\begin{itemize}
\item[$\bullet$] If $\pi \in \Ank$, write $\pi=\tau_1 \, 1 \, \tau_2$. Then, $\rho^{-1}(\pi)=\tau_1^R \, 1 \, \tau_2$.
\item[$\bullet$] If $\pi \not\in \Ank$, write $\pi=\tau_1 \, 1 \, \tau_2$. Then, $\rho^{-1}(\pi)=\tau_1^R \, n \, \overline{\tau_2}$.
\end{itemize}
\end{proof}
With the previous theorem, the following interpretation of Poupard, proved in \cite{Pou97} by recurrence relations, can be recovered.
\begin{cor} \label{thmpou2}
 The sequence $(\mathcal{MM}_{n,k}')_{1 \leq k \leq n}$ is an Entringer family.
\end{cor}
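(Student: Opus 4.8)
The plan is to read off the corollary directly from the preceding theorem, so that essentially no new work is required. Recall from the introduction that $E_{n,k}$ is by definition the cardinality of $\mathcal{DU}_{n,k}$, and that a sequence of sets $(X_{n,k})_{1\le k\le n}$ is an Entringer family precisely when $\abs{X_{n,k}}=E_{n,k}$ for all $1\le k\le n$. The preceding theorem furnishes, for every $n\ge 1$ and every $k\in[n]$, an explicit bijection $\rho\colon \mathcal{DU}_{n,k}\to\mathcal{MM}_{n,k}'$. Hence $\abs{\mathcal{MM}_{n,k}'}=\abs{\mathcal{DU}_{n,k}}=E_{n,k}$ for all admissible $n,k$, which is exactly the assertion that $(\mathcal{MM}_{n,k}')_{1\le k\le n}$ is an Entringer family. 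So the proof is one line: invoke the bijection $\rho$ and count.

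Since all of the substance is contained in the construction of $\rho$ and the verification that it is a bijection, let me record which parts of that verification carry the weight (these are assumed, being the content of the preceding theorem). The key points are: (i) the disjoint decomposition $\mathcal{DU}_{n,k}=\mathcal{DU}_{n,k,1n}\sqcup\mathcal{DU}_{n,k,n1}$ according to whether $1$ precedes $n$, together with the boundary case $k=n$ handled by the complement map $\pi\mapsto\overline{\pi}$; (ii) that reversing the prefix $\sigma_1$ in $\pi=\sigma_1\,1\,\sigma_2$ (resp. $\pi=\sigma_1\,n\,\sigma_2$) leaves the alternating pattern intact, because in a down-up permutation the entry $1$ occupies a valley, so its left neighbour is a peak both before and after the reversal, and the term just before $1$ in $\rho(\pi)$ is still $\pi_1=k$; (iii) that in the $\mathcal{DU}_{n,k,n1}$ case, replacing $n$ by $1$ and complementing the suffix $\sigma_2$ on its value set sends a down-up tail to a down-up tail while making $1$ the global minimum, so $\rho(\pi)$ is genuinely min-max alternating and lies in $\mathcal{MM}_{n,k}'$. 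The inverse $\rho^{-1}$ is then obtained by testing whether the given $\pi\in\mathcal{MM}_n$ (with $k$ immediately before $1$) belongs to $\mathcal{DU}_{n,k}$ or not, and applying the corresponding reversal/complement, exactly as in the theorem's proof.

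The main—and essentially only—obstacle therefore lies not in the corollary but in that verification: confirming that the piecewise definition of $\rho$ is exhaustive and that $\rho$ together with the stated $\rho^{-1}$ are mutually inverse, in particular that prefix-reversal and suffix-complementation each preserve the min-max alternating structure and the position of the entry $1$. Once this is granted (it is the preceding theorem), the corollary follows immediately by comparing cardinalities, and nothing further needs to be proved.
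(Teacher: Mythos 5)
Your proposal is correct and matches the paper exactly: the corollary is stated there as an immediate consequence of the preceding theorem that $\rho\colon \mathcal{DU}_{n,k}\to\mathcal{MM}_{n,k}'$ is a bijection, so $\abs{\mathcal{MM}_{n,k}'}=\abs{\mathcal{DU}_{n,k}}=E_{n,k}$ and nothing further is needed. Your additional remarks correctly identify that all the substance lives in the verification of the bijection, which is the content of the theorem and not of the corollary.
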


Denote by $\mathcal{MM}_{n,k}''$ the set of $\pi \in \mathcal{MM}_n$ such that the term immediately after $n$ is $n+1-k$, if $k \leq n-1$, and $\mathcal{MM}_{n,n}''$ the set of $\pi \in \mathcal{MM}_n$ such that $\pi_n=n$.

Denote by $\rho'$ the mapping defined for $\pi \in \mathcal{MM}_{n,k}''$ by $\rho'(\pi)=\overline{\pi^R}$.

\begin{thm} \label{thmpou3}
For all $n \geq 1$ and $k \in [n]$, the mapping $\rho'$ is a bijection between $\mathcal{MM}_{n,k}'$ and $\mathcal{MM}_{n,k}''$.
 Therefore, the sequence $(\mathcal{MM}_{n,k}'')_{1 \leq k \leq n}$ is an Entringer family.
\end{thm}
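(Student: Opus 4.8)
The plan is to exploit the fact that $\rho'$ is built from two commuting involutions. Since reversal and complementation of a permutation commute, one has $\rho'\bigl(\rho'(\pi)\bigr)=\overline{\bigl(\overline{\pi^{R}}\bigr)^{R}}=\overline{\overline{\pi}}=\pi$, so $\rho'$ is an involution on the set of all permutations of $[n]$. Consequently it suffices to verify the two inclusions $\rho'(\mathcal{MM}_{n,k}')\subseteq\mathcal{MM}_{n,k}''$ and $\rho'(\mathcal{MM}_{n,k}'')\subseteq\mathcal{MM}_{n,k}'$, because an involution of a set that carries a subset $A$ into a subset $B$ and $B$ into $A$ restricts to a bijection between $A$ and $B$ (injectivity is inherited, and surjectivity onto $B$ follows from $b=\rho'(\rho'(b))\in\rho'(A)$).

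First I would check that $\rho'$ preserves $\mathcal{MM}_{n}$. Reversal and complement each preserve the alternating property, hence so does $\rho'$. For the min-max condition, observe that if $1$ precedes $n$ in $\pi$ then $n$ precedes $1$ in $\pi^{R}$; applying the complement interchanges the positions occupied by the values $1$ and $n$ (because $n+1-1=n$ and $n+1-n=1$), so in $\overline{\pi^{R}}$ the value $1$ occupies the former position of $n$ and therefore still precedes $n$. Thus $\rho'(\pi)\in\mathcal{MM}_{n}$.

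The heart of the argument is a short positional computation. Assume $k\le n-1$ and $\pi\in\mathcal{MM}_{n,k}'$, so $\pi_{j}=k$ and $\pi_{j+1}=1$ for some $j$. In $\pi^{R}=\pi_{n}\cdots\pi_{1}$ this pair appears reversed at positions $n-j$ and $n-j+1$, i.e.\ $(\pi^{R})_{n-j}=1$ and $(\pi^{R})_{n-j+1}=k$; complementing gives $(\overline{\pi^{R}})_{n-j}=n$ and $(\overline{\pi^{R}})_{n-j+1}=n+1-k$, so the entry immediately after $n$ in $\rho'(\pi)$ is $n+1-k$ and $\rho'(\pi)\in\mathcal{MM}_{n,k}''$. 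Reading the same computation in the other direction yields $\rho'(\mathcal{MM}_{n,k}'')\subseteq\mathcal{MM}_{n,k}'$. The boundary case $k=n$ is identical but shorter: $\pi_{1}=1$ forces $(\pi^{R})_{n}=1$ and hence $\rho'(\pi)_{n}=n$, so $\rho'$ exchanges $\mathcal{MM}_{n,n}'$ and $\mathcal{MM}_{n,n}''$. Combining these inclusions with the involution remark shows $\rho'$ is a bijection between $\mathcal{MM}_{n,k}'$ and $\mathcal{MM}_{n,k}''$, and since $(\mathcal{MM}_{n,k}')_{1\le k\le n}$ is an Entringer family by Corollary~\ref{thmpou2}, so is $(\mathcal{MM}_{n,k}'')_{1\le k\le n}$.

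I do not expect a serious obstacle; the only point demanding care is the index bookkeeping under the composition of reversal and complement, together with splitting off the case $k=n$, where the statistic ``term immediately after $n$'' degenerates into the condition $\pi_{n}=n$.
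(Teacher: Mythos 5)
Your proposal is correct and follows essentially the same route as the paper, whose one-line proof simply asserts the positional equivalence ($k$ just before $1$ in $\pi$ iff $n+1-k$ just after $n$ in $\rho'(\pi)$) that you verify in detail. The extra observations you supply --- that $\rho'$ is an involution and that it preserves $\mathcal{MM}_n$ --- are correct and fill in the bookkeeping the paper leaves implicit.
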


\begin{proof} For $k \leq n-1$, $\pi \in \mathcal{MM}_n$ has $k$ just before $1$ if and only if $\rho'(\pi)$ has $n+1-k$ just after $n$.
\end{proof}

\section{New Entringer families}
\label{s-2new}

\subsection{Interpretations in G-words and R-words}
\label{s-gword}

A permutation $\pi$ of $I=\{a_1,\ldots,a_n\}$ with $a_1<\cdots<a_n$ is called a \emph{G-word} if \begin{itemize}
\item[(i)] $\pi_1=a_n$, $\pi_n=a_{n-1},$
\item[(ii)] $\pi_2 > \pi_{n-1}$ (if $n \geq 4$). \end{itemize}
Similarly, a permutation $\pi$ of $I$ is called an \emph{R-word} if previous conditions are satisfied when (ii) is replaced by
\begin{itemize}
\item[(ii')] $\pi_2 < \pi_{n-1}$ (if $n \geq 4$). \end{itemize}
A G-word (resp. an R-word) is said to be \emph{primitive} if for all $1 \leq i < j \leq n$, neither the word $\pi_i \pi_{i+1} \ldots \pi_{j}$ nor the word $\pi_{j} \pi_{j-1} \ldots \pi_{i}$ is a G-word (resp. an R-word).
Denote respectively by $\mathcal{GW}_n$ and $\mathcal{RW}_n$ the set of primitive G-words on $[n+2]$ and primitive R-words on $[n]$.
For examples, the G-words in $\mathcal{GW}_4$ are:
\[ 6 \, 3 \, 4 \, 2 \, 1 \, 5, \quad 6 \, 4 \, 2 \, 3 \, 1 \, 5, \quad 6 \, 2 \, 3 \, 4 \, 1 \, 5, \quad 6 \, 4 \, 3 \, 2 \, 1 \, 5, \quad 6 \, 2 \, 4 \, 3 \, 1 \,5,\]
and the R-words in $\mathcal{RW}_4$ are:
\[ 6 \, 2 \, 1 \, 4 \, 3 \, 5, \quad 6 \, 2 \, 3 \, 1 \, 4 \, 5, \quad 6 \, 1 \, 4 \, 2 \, 3 \, 5, \quad 6 \, 3 \, 1 \, 2 \, 4 \, 5, \quad 6 \, 2 \, 4 \, 1 \, 3 \,5.\]

These permutations were introduced in \cite{Mar06} with the following problem. Let $I_n$ be the ideal of all algebraic relations on the slopes of all lines that can be formed by placing $n$ points in a plane. Then, under two orders, $I_n$ is generated by monomials corresponding to respectively primitive G-words and primitive R-words.

Martin and Wagner proved \cite{MW09} that $E_n$ is the number of primitive G-words (resp. the number of primitive R-words) on $[n+2]$. 
Actually, this result can be refined to Entringer numbers by  introducing a statistic on G-words.

Given a primitive G-word or an R-word $\pi$ on $[n+2]$,  the \emph{route} of $\pi$ is  the sequence $(\alpha_i)$ defined by  the following procedure:
\begin{itemize}
\item[$\bullet$] $\alpha_1=n+2 (=\pi_1)$, $\alpha_2=n+1(=\pi_{n+2})$,
\item[$\bullet$] for $k \geq 2$, if $\alpha_k=\pi_{i}$, define $A_{k}=\{\alpha_{1}, \ldots, \alpha_{k-1}\}$ and 
$$
\alpha_{k+1}=\begin{cases}
\alpha_{k}&\textrm{if $\{\pi_{i-1}, \pi_{i+1}\}\subset A_{k}$},\\
\max \left[ \begin{array}{l} \left\{
    \pi_{j}  | j < i \mbox{~and~}   \pi_j, \pi_{j+1}, \ldots,\pi_{i-1}\not\in A_k \right\} \\  \ \cup
\left\{\pi_{j}  | j > i \mbox{~and~}   \pi_{i+1}, \ldots,\pi_{j-1}, \pi_j \not\in A_{k}\right\} \end{array} \right]&\textrm{otherwise}.
\end{cases}
$$
\end{itemize}
One can represent the route of a G-word or an R-word $\pi$ as a graph with the vertices $\pi_1, \pi_2,\ldots,\pi_n$ ordered in a line, with only one path starting from $n$ drawn upon the line and going successively, if it's possible, to $n-1$, $n-2$, \ldots, $1$ without crossings (see Figure~\ref{fig-gword} for an example). Denote $\mathcal{GW}_{n,k}$ (resp. $\mathcal{RW}_{n,k}$) the set of primitive G-words $\pi$ on $[n+2]$ (resp. primitive R-words $\pi$ on $[n+2]$) such that $\alpha_{n+2} = n+1-k$.

\begin{figure}[!h]
$
\centering
\begin{pgfpicture}{21.27mm}{29.73mm}{110.74mm}{57.38mm}
\pgfsetxvec{\pgfpoint{0.60mm}{0mm}}
\pgfsetyvec{\pgfpoint{0mm}{0.60mm}}
\color[rgb]{0,0,0}\pgfsetlinewidth{0.30mm}\pgfsetdash{}{0mm}
\pgfcircle[fill]{\pgfxy(39.97,60.01)}{0.60mm}
\pgfcircle[stroke]{\pgfxy(39.97,60.01)}{0.60mm}
\pgfcircle[fill]{\pgfxy(59.77,60.01)}{0.60mm}
\pgfcircle[stroke]{\pgfxy(59.77,60.01)}{0.60mm}
\pgfcircle[fill]{\pgfxy(80.28,60.01)}{0.60mm}
\pgfcircle[stroke]{\pgfxy(80.28,60.01)}{0.60mm}
\pgfcircle[fill]{\pgfxy(100.09,60.01)}{0.60mm}
\pgfcircle[stroke]{\pgfxy(100.09,60.01)}{0.60mm}
\pgfcircle[fill]{\pgfxy(119.90,60.01)}{0.60mm}
\pgfcircle[stroke]{\pgfxy(119.90,60.01)}{0.60mm}
\pgfcircle[fill]{\pgfxy(140.17,60.01)}{0.60mm}
\pgfcircle[stroke]{\pgfxy(140.17,60.01)}{0.60mm}
\pgfcircle[fill]{\pgfxy(159.98,60.01)}{0.60mm}
\pgfcircle[stroke]{\pgfxy(159.98,60.01)}{0.60mm}
\pgfcircle[fill]{\pgfxy(180.02,60.01)}{0.60mm}
\pgfcircle[stroke]{\pgfxy(180.02,60.01)}{0.60mm}
\pgfmoveto{\pgfxy(39.97,60.01)}\pgfstroke
\pgfmoveto{\pgfxy(39.97,60.15)}\pgfcurveto{\pgfxy(39.97,60.09)}{\pgfxy(39.99,60.03)}{\pgfxy(40.03,59.98)}\pgfcurveto{\pgfxy(40.08,59.94)}{\pgfxy(40.14,59.91)}{\pgfxy(40.20,59.91)}\pgfcurveto{\pgfxy(40.26,59.91)}{\pgfxy(40.32,59.94)}{\pgfxy(40.36,59.98)}\pgfcurveto{\pgfxy(40.41,60.03)}{\pgfxy(40.43,60.08)}{\pgfxy(40.43,60.15)}\pgfstroke
\pgfmoveto{\pgfxy(180.26,60.14)}\pgfcurveto{\pgfxy(168.63,73.65)}{\pgfxy(153.35,83.51)}{\pgfxy(136.25,88.54)}\pgfcurveto{\pgfxy(119.15,93.57)}{\pgfxy(100.96,93.55)}{\pgfxy(83.87,88.49)}\pgfcurveto{\pgfxy(66.78,83.43)}{\pgfxy(51.52,73.53)}{\pgfxy(39.92,60.00)}\pgfstroke
\pgfmoveto{\pgfxy(180.02,60.15)}\pgfcurveto{\pgfxy(175.08,65.96)}{\pgfxy(168.56,70.21)}{\pgfxy(161.24,72.37)}\pgfcurveto{\pgfxy(153.92,74.53)}{\pgfxy(146.13,74.51)}{\pgfxy(138.83,72.31)}\pgfcurveto{\pgfxy(131.52,70.11)}{\pgfxy(125.02,65.83)}{\pgfxy(120.11,59.98)}\pgfstroke
\pgfmoveto{\pgfxy(119.90,59.91)}\pgfcurveto{\pgfxy(119.85,65.11)}{\pgfxy(117.77,70.08)}{\pgfxy(114.10,73.76)}\pgfcurveto{\pgfxy(110.44,77.45)}{\pgfxy(105.48,79.56)}{\pgfxy(100.29,79.64)}\pgfcurveto{\pgfxy(95.09,79.72)}{\pgfxy(90.07,77.77)}{\pgfxy(86.29,74.21)}\pgfcurveto{\pgfxy(82.52,70.64)}{\pgfxy(80.28,65.74)}{\pgfxy(80.06,60.55)}\pgfstroke
\pgfmoveto{\pgfxy(100.09,59.91)}\pgfcurveto{\pgfxy(100.09,62.55)}{\pgfxy(99.05,65.08)}{\pgfxy(97.20,66.96)}\pgfcurveto{\pgfxy(95.34,68.83)}{\pgfxy(92.82,69.90)}{\pgfxy(90.19,69.93)}\pgfcurveto{\pgfxy(87.55,69.96)}{\pgfxy(85.01,68.95)}{\pgfxy(83.11,67.12)}\pgfcurveto{\pgfxy(81.21,65.29)}{\pgfxy(80.11,62.78)}{\pgfxy(80.05,60.15)}\pgfstroke
\pgfputat{\pgfxy(40.04,53.97)}{\pgfbox[bottom,left]{\fontsize{8.54}{10.24}\selectfont \makebox[0pt]{8}}}
\pgfputat{\pgfxy(60.20,54.03)}{\pgfbox[bottom,left]{\fontsize{8.54}{10.24}\selectfont \makebox[0pt]{2}}}
\pgfputat{\pgfxy(80.16,54.03)}{\pgfbox[bottom,left]{\fontsize{8.54}{10.24}\selectfont \makebox[0pt]{5}}}
\pgfputat{\pgfxy(99.83,54.09)}{\pgfbox[bottom,left]{\fontsize{8.54}{10.24}\selectfont \makebox[0pt]{$\boxed{4}$}}}
\pgfputat{\pgfxy(120.08,54.08)}{\pgfbox[bottom,left]{\fontsize{8.54}{10.24}\selectfont \makebox[0pt]{6}}}
\pgfputat{\pgfxy(140.02,54.30)}{\pgfbox[bottom,left]{\fontsize{8.54}{10.24}\selectfont \makebox[0pt]{3}}}
\pgfputat{\pgfxy(160.04,54.08)}{\pgfbox[bottom,left]{\fontsize{8.54}{10.24}\selectfont \makebox[0pt]{1}}}
\pgfputat{\pgfxy(179.98,54.03)}{\pgfbox[bottom,left]{\fontsize{8.54}{10.24}\selectfont \makebox[0pt]{7}}}
\end{pgfpicture}%
$
\caption{\label{fig-gword} The route of the G-word $\pi=82546317$}
\end{figure}

\begin{thm}
 The sequences $(\mathcal{GW}_{n,k})_{1 \leq k \leq n}$ and $(\mathcal{RW}_{n,k})_{1 \leq k \leq n}$ are Entringer families.
\end{thm}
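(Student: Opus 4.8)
The plan is to derive the statement from one of the Entringer families already constructed above. Since the relevant statistic of a word is its terminal route value $\alpha_{n+2}=n+1-k$, and since the interior $\pi_2\cdots\pi_{n+1}$ of a G-word or R-word on $[n+2]$ is itself a permutation of $[n]$, the most natural targets are Poupard's families $\mathcal{DU}_{n,k}'$ or $\mathcal{MM}_{n,k}$ of Section~\ref{sec-poupard} (whose statistic is also $n+1-k$), or, after the harmless complementation $k\leftrightarrow n+1-k$, the original family $\Ank$; alternatively one can bypass an explicit bijection and verify the recurrence \eqref{eq1} directly.

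First I would eliminate the difference between the two word models. The classes $\mathcal{GW}_{n,k}$ and $\mathcal{RW}_{n,k}$ differ only in the sign of $\pi_2-\pi_{n-1}$, i.e.\ in condition (ii) versus (ii'). Reversing the interior $\pi_2\pi_3\cdots\pi_{n+1}$ of a word on $[n+2]$, while fixing $\pi_1=n+2$ and $\pi_{n+2}=n+1$, interchanges (ii) and (ii'); moreover it leaves the route $(\alpha_i)$ unchanged, because the route is built symmetrically in ``looking to the left'' and ``looking to the right'' from a position and uses only the position-adjacency graph together with the values carried by the positions, both of which are invariant under this reversal (one checks this, for instance, on $\pi=82546317$). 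After verifying that primitivity is preserved by this operation it suffices to treat one of the two families.

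I would then analyse the route. One first shows that the route of a primitive word necessarily has the form $n+2,\ n+1,\ n,\ r_1>r_2>\dots>r_s$ with $r_1<n$, followed by a single stall at $r_s=\alpha_{n+2}=n+1-k$ (for $n=1$ there are no $r_i$ and the stall value is $n$ itself); the strict decrease of the records $r_i$ follows because, at each genuine step, the whole block of still-unvisited positions that the route can reach carries values smaller than the value just reached. Each step jumps over a contiguous block of unvisited positions, and the nesting pattern of these blocks together with the direction of each jump carries exactly the information of one of the Entringer structures of Sections~\ref{s-bij}--\ref{sec-poupard} (a binary increasing tree, an encoding sequence, or a down-up permutation), the terminal route value corresponding to the relevant statistic; primitivity of the word is precisely what makes this correspondence well defined, as it forbids a sub-route from closing off prematurely. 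The inverse map reinserts the blocks one at a time.

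The technical heart, and the main obstacle, is this route analysis. One has to show that the route stalls exactly once, that it never stalls at $n$ when $n\ge2$ — which already yields $|\mathcal{RW}_{n,1}|=0=E_{n,1}$ for $n\ge2$ and $|\mathcal{RW}_{1,1}|=1=E_{1,1}$ — and one has to control exactly how deleting or inserting a block affects both the route and primitivity. If a clean bijection proves elusive, a safe fallback is an induction showing that $|\mathcal{GW}_{n,k}|$ and $|\mathcal{RW}_{n,k}|$ obey \eqref{eq1}: split $\mathcal{RW}_{n,k}$ according to whether the route reaches its terminal value $n+1-k$ through a genuine step or through its stall, put the first class in bijection with $\mathcal{RW}_{n,k-1}$ (keeping all $n+2$ letters and raising the terminal route value by one) and the second with $\mathcal{RW}_{n-1,n+1-k}$ (deleting a suitable pair of letters and relabelling), and conclude exactly as at the end of Section~\ref{s-dnk}.
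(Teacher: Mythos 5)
Your reduction of R-words to G-words by reversing the interior $\pi_2\cdots\pi_{n+1}$ does not work. It is true that this operation exchanges conditions (ii) and (ii') and (since positions $1$ and $n+2$ act as walls for the route after the forced steps $\alpha_1=n+2$, $\alpha_2=n+1$) preserves the route, but it destroys primitivity: a factor of the original word containing position $1$ or position $n+2$ is no longer a factor after the interior is reversed, and conversely new forbidden factors appear. Concretely, for $n=4$ the interior reversals of the five primitive G-words $6\,3\,4\,2\,1\,5,\dots$ are $6\,1\,2\,4\,3\,5,\ 6\,1\,3\,2\,4\,5,\ 6\,1\,4\,3\,2\,5,\ 6\,1\,2\,3\,4\,5,\ 6\,1\,3\,4\,2\,5$, and none of these lies in the set $\mathcal{RW}_4$ listed in Section~\ref{s-gword}. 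So the two families cannot be identified this way, and each must be treated on its own (the paper indeed uses two different bijections, $\delta$ for G-words and $\delta'$ for R-words).

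The second, and central, gap is that the step carrying all the content --- ``the nesting pattern of these blocks together with the direction of each jump carries exactly the information of one of the Entringer structures'' --- is never made precise. The paper's proof supplies exactly the missing ingredient: it takes the Martin--Wagner bijection $\delta:\mathcal{GW}_n\to\Tn$, defined by recursively splitting the interior word at its maximum to build a binary increasing tree, and then proves that the values $a_1>a_2>\cdots>a_m$ occurring in the route of $\pi$ become precisely the labels $n+1-a_1,\dots,n+1-a_m$ along the minimal path of $\delta(\pi)$; hence the terminal route value $n+1-k$ corresponds to minimal-path leaf $k$ and $\delta$ restricts to a bijection $\mathcal{GW}_{n,k}\to\Tnk$, after which Theorem~\ref{thm-pou1} (or Theorem~\ref{mainth}) finishes the proof. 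Without naming this bijection and proving the route/minimal-path correspondence, your route analysis (single stall, strictly decreasing record values, behaviour under block deletion) remains a programme rather than a proof; the same applies to the fallback verification of recurrence \eqref{eq1}, where the two claimed sub-bijections are asserted but not constructed, and where controlling primitivity under deletion or relabelling of letters is precisely the hard point.
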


\begin{proof}
Use the bijection $\delta$ between $\mathcal{GW}_{n}$ and $\Tn$ present in \cite{MW09}. For $\pi$ a primitive G-word on $\{a_1,\ldots,a_{n+2}\}$ with $a_1 < \cdots < a_{n+2}$, denote by $\pi'$ the word $\pi_2\ldots\pi_{n+1}$. If $\pi'$ is a word on $\{a_1,\ldots,a_n \}$, with $a_1 < \cdots < a_n$ and $a_{n}=\pi'_k$ for $k \in \{1,\ldots,n\}$, define $T=\alpha(\pi')$ as the tree with root $a_1$, from which two subgraphs go out, that are $\alpha(\pi'_1 \pi'_2 \ldots \pi'_{k-1})$ and $\alpha(\pi'_{k+1}\pi'_{k+2}\ldots\pi'_n)$ (eventually one of them or both are empty). The tree $\delta(\pi)=\alpha(\pi')$ is a binary  increasing increasing tree and the application $\delta$ is a bijection from $\mathcal{GW}_n$ to $\Tn$ (see \cite{MW09} for further details).

Moreover, it is easy to see that the labels upon the minimal path of $T=\delta(\pi)$ are successively $(n+1-a_1),(n+1-a_2),\ldots,(n+1-a_m)$, where $a_1\ldots,a_m$ ($a_1>\cdots>a_m$) are the different values that appear in the route of $\pi$. Thus, the leaf of the minimal path is $k$. Then, $\delta$ is a bijection between $\mathcal{GW}_{n,k}$ and $\mathcal{T}_{n,k}$.

For example, one can construct the tree that corresponds with the G-word $\pi=82546317$ with this construction:
\[
\centering
\begin{pgfpicture}{21.20mm}{27.39mm}{165.57mm}{60.55mm}
\pgfsetxvec{\pgfpoint{0.60mm}{0mm}}
\pgfsetyvec{\pgfpoint{0mm}{0.60mm}}
\color[rgb]{0,0,0}\pgfsetlinewidth{0.30mm}\pgfsetdash{}{0mm}
\pgfcircle[fill]{\pgfxy(39.97,60.01)}{0.60mm}
\pgfcircle[stroke]{\pgfxy(39.97,60.01)}{0.60mm}
\pgfcircle[fill]{\pgfxy(59.77,60.01)}{0.60mm}
\pgfcircle[stroke]{\pgfxy(59.77,60.01)}{0.60mm}
\pgfcircle[fill]{\pgfxy(80.28,60.01)}{0.60mm}
\pgfcircle[stroke]{\pgfxy(80.28,60.01)}{0.60mm}
\pgfcircle[fill]{\pgfxy(100.09,60.01)}{0.60mm}
\pgfcircle[stroke]{\pgfxy(100.09,60.01)}{0.60mm}
\pgfcircle[fill]{\pgfxy(119.90,60.01)}{0.60mm}
\pgfcircle[stroke]{\pgfxy(119.90,60.01)}{0.60mm}
\pgfcircle[fill]{\pgfxy(140.17,60.01)}{0.60mm}
\pgfcircle[stroke]{\pgfxy(140.17,60.01)}{0.60mm}
\pgfcircle[fill]{\pgfxy(159.98,60.01)}{0.60mm}
\pgfcircle[stroke]{\pgfxy(159.98,60.01)}{0.60mm}
\pgfcircle[fill]{\pgfxy(180.02,60.01)}{0.60mm}
\pgfcircle[stroke]{\pgfxy(180.02,60.01)}{0.60mm}
\pgfmoveto{\pgfxy(39.97,60.01)}\pgfstroke
\pgfmoveto{\pgfxy(39.97,60.15)}\pgfcurveto{\pgfxy(39.97,60.09)}{\pgfxy(39.99,60.03)}{\pgfxy(40.03,59.98)}\pgfcurveto{\pgfxy(40.08,59.94)}{\pgfxy(40.14,59.91)}{\pgfxy(40.20,59.91)}\pgfcurveto{\pgfxy(40.26,59.91)}{\pgfxy(40.32,59.94)}{\pgfxy(40.36,59.98)}\pgfcurveto{\pgfxy(40.41,60.03)}{\pgfxy(40.43,60.08)}{\pgfxy(40.43,60.15)}\pgfstroke
\pgfsetdash{{2.00mm}{1.00mm}}{0mm}\pgfmoveto{\pgfxy(180.26,60.14)}\pgfcurveto{\pgfxy(168.63,73.65)}{\pgfxy(153.35,83.51)}{\pgfxy(136.25,88.54)}\pgfcurveto{\pgfxy(119.15,93.57)}{\pgfxy(100.96,93.55)}{\pgfxy(83.87,88.49)}\pgfcurveto{\pgfxy(66.78,83.43)}{\pgfxy(51.52,73.53)}{\pgfxy(39.92,60.00)}\pgfstroke
\pgfmoveto{\pgfxy(180.02,60.15)}\pgfcurveto{\pgfxy(175.08,65.96)}{\pgfxy(168.56,70.21)}{\pgfxy(161.24,72.37)}\pgfcurveto{\pgfxy(153.92,74.53)}{\pgfxy(146.13,74.51)}{\pgfxy(138.83,72.31)}\pgfcurveto{\pgfxy(131.52,70.11)}{\pgfxy(125.02,65.83)}{\pgfxy(120.11,59.98)}\pgfstroke
\pgfsetdash{}{0mm}\pgfmoveto{\pgfxy(119.90,59.91)}\pgfcurveto{\pgfxy(119.85,65.11)}{\pgfxy(117.77,70.08)}{\pgfxy(114.10,73.76)}\pgfcurveto{\pgfxy(110.44,77.45)}{\pgfxy(105.48,79.56)}{\pgfxy(100.29,79.64)}\pgfcurveto{\pgfxy(95.09,79.72)}{\pgfxy(90.07,77.77)}{\pgfxy(86.29,74.21)}\pgfcurveto{\pgfxy(82.52,70.64)}{\pgfxy(80.28,65.74)}{\pgfxy(80.06,60.55)}\pgfstroke
\pgfmoveto{\pgfxy(100.09,59.91)}\pgfcurveto{\pgfxy(100.09,62.55)}{\pgfxy(99.05,65.08)}{\pgfxy(97.20,66.96)}\pgfcurveto{\pgfxy(95.34,68.83)}{\pgfxy(92.82,69.90)}{\pgfxy(90.19,69.93)}\pgfcurveto{\pgfxy(87.55,69.96)}{\pgfxy(85.01,68.95)}{\pgfxy(83.11,67.12)}\pgfcurveto{\pgfxy(81.21,65.29)}{\pgfxy(80.11,62.78)}{\pgfxy(80.05,60.15)}\pgfstroke
\pgfputat{\pgfxy(39.92,54.61)}{\pgfbox[bottom,left]{\fontsize{8.54}{10.24}\selectfont \makebox[0pt]{8}}}
\pgfputat{\pgfxy(59.87,54.59)}{\pgfbox[bottom,left]{\fontsize{8.54}{10.24}\selectfont \makebox[0pt]{2}}}
\pgfputat{\pgfxy(79.88,54.47)}{\pgfbox[bottom,left]{\fontsize{8.54}{10.24}\selectfont \makebox[0pt]{5}}}
\pgfputat{\pgfxy(99.92,54.59)}{\pgfbox[bottom,left]{\fontsize{8.54}{10.24}\selectfont \makebox[0pt]{4}}}
\pgfputat{\pgfxy(119.98,54.61)}{\pgfbox[bottom,left]{\fontsize{8.54}{10.24}\selectfont \makebox[0pt]{6}}}
\pgfputat{\pgfxy(140.00,54.61)}{\pgfbox[bottom,left]{\fontsize{8.54}{10.24}\selectfont \makebox[0pt]{3}}}
\pgfputat{\pgfxy(159.91,54.45)}{\pgfbox[bottom,left]{\fontsize{8.54}{10.24}\selectfont \makebox[0pt]{1}}}
\pgfputat{\pgfxy(179.87,54.36)}{\pgfbox[bottom,left]{\fontsize{8.54}{10.24}\selectfont \makebox[0pt]{7}}}
\pgfmoveto{\pgfxy(140.30,59.89)}\pgfcurveto{\pgfxy(137.45,62.32)}{\pgfxy(133.84,63.66)}{\pgfxy(130.10,63.67)}\pgfcurveto{\pgfxy(126.36,63.69)}{\pgfxy(122.73,62.37)}{\pgfxy(119.87,59.97)}\pgfstroke
\pgfmoveto{\pgfxy(150.24,48.99)}\pgfstroke
\pgfmoveto{\pgfxy(160.02,60.37)}\pgfcurveto{\pgfxy(157.20,62.67)}{\pgfxy(153.67,63.92)}{\pgfxy(150.03,63.91)}\pgfcurveto{\pgfxy(146.39,63.90)}{\pgfxy(142.87,62.63)}{\pgfxy(140.06,60.31)}\pgfstroke
\pgfmoveto{\pgfxy(80.18,60.53)}\pgfcurveto{\pgfxy(78.74,62.96)}{\pgfxy(76.49,64.79)}{\pgfxy(73.83,65.72)}\pgfcurveto{\pgfxy(71.16,66.64)}{\pgfxy(68.25,66.59)}{\pgfxy(65.62,65.57)}\pgfcurveto{\pgfxy(62.99,64.56)}{\pgfxy(60.81,62.64)}{\pgfxy(59.45,60.17)}\pgfstroke
\pgfputat{\pgfxy(187.44,71.85)}{\pgfbox[bottom,left]{\fontsize{11.95}{14.34}\selectfont $\Rightarrow$}}
\pgfcircle[fill]{\pgfxy(204.01,54.93)}{0.60mm}
\pgfcircle[stroke]{\pgfxy(204.01,54.93)}{0.60mm}
\pgfputat{\pgfxy(199.18,53.09)}{\pgfbox[bottom,left]{\fontsize{8.54}{10.24}\selectfont 6}}
\pgfcircle[fill]{\pgfxy(204.06,74.97)}{0.60mm}
\pgfcircle[stroke]{\pgfxy(204.06,74.97)}{0.60mm}
\pgfcircle[fill]{\pgfxy(204.17,95.37)}{0.60mm}
\pgfcircle[stroke]{\pgfxy(204.17,95.37)}{0.60mm}
\pgfcircle[fill]{\pgfxy(214.03,94.92)}{0.60mm}
\pgfcircle[stroke]{\pgfxy(214.03,94.92)}{0.60mm}
\pgfcircle[fill]{\pgfxy(213.92,74.97)}{0.60mm}
\pgfcircle[stroke]{\pgfxy(213.92,74.97)}{0.60mm}
\pgfcircle[fill]{\pgfxy(223.90,95.03)}{0.60mm}
\pgfcircle[stroke]{\pgfxy(223.90,95.03)}{0.60mm}
\pgfmoveto{\pgfxy(203.94,55.13)}\pgflineto{\pgfxy(204.17,95.26)}\pgfstroke
\pgfmoveto{\pgfxy(204.06,54.79)}\pgflineto{\pgfxy(224.12,95.03)}\pgfstroke
\pgfmoveto{\pgfxy(204.06,74.97)}\pgflineto{\pgfxy(214.03,95.03)}\pgfstroke
\pgfputat{\pgfxy(199.86,73.72)}{\pgfbox[bottom,left]{\fontsize{8.54}{10.24}\selectfont 5}}
\pgfputat{\pgfxy(199.64,93.67)}{\pgfbox[bottom,left]{\fontsize{8.54}{10.24}\selectfont 4}}
\pgfputat{\pgfxy(216.07,93.44)}{\pgfbox[bottom,left]{\fontsize{8.54}{10.24}\selectfont 2}}
\pgfputat{\pgfxy(226.50,93.22)}{\pgfbox[bottom,left]{\fontsize{8.54}{10.24}\selectfont 1}}
\pgfputat{\pgfxy(215.96,73.15)}{\pgfbox[bottom,left]{\fontsize{8.54}{10.24}\selectfont 3}}
\pgfcircle[fill]{\pgfxy(247.62,53.64)}{0.60mm}
\pgfcircle[stroke]{\pgfxy(247.62,53.64)}{0.60mm}
\pgfputat{\pgfxy(242.79,51.80)}{\pgfbox[bottom,left]{\fontsize{8.54}{10.24}\selectfont 1}}
\pgfcircle[fill]{\pgfxy(247.66,73.68)}{0.60mm}
\pgfcircle[stroke]{\pgfxy(247.66,73.68)}{0.60mm}
\pgfcircle[fill]{\pgfxy(247.78,94.09)}{0.60mm}
\pgfcircle[stroke]{\pgfxy(247.78,94.09)}{0.60mm}
\pgfcircle[fill]{\pgfxy(257.64,93.63)}{0.60mm}
\pgfcircle[stroke]{\pgfxy(257.64,93.63)}{0.60mm}
\pgfcircle[fill]{\pgfxy(257.53,73.68)}{0.60mm}
\pgfcircle[stroke]{\pgfxy(257.53,73.68)}{0.60mm}
\pgfcircle[fill]{\pgfxy(267.50,93.75)}{0.60mm}
\pgfcircle[stroke]{\pgfxy(267.50,93.75)}{0.60mm}
\pgfmoveto{\pgfxy(247.55,53.84)}\pgflineto{\pgfxy(247.78,93.97)}\pgfstroke
\pgfmoveto{\pgfxy(247.66,53.50)}\pgflineto{\pgfxy(267.73,93.75)}\pgfstroke
\pgfmoveto{\pgfxy(247.66,73.68)}\pgflineto{\pgfxy(257.64,93.75)}\pgfstroke
\pgfputat{\pgfxy(243.47,72.44)}{\pgfbox[bottom,left]{\fontsize{8.54}{10.24}\selectfont 2}}
\pgfputat{\pgfxy(243.24,92.39)}{\pgfbox[bottom,left]{\fontsize{8.54}{10.24}\selectfont 3}}
\pgfputat{\pgfxy(259.68,92.16)}{\pgfbox[bottom,left]{\fontsize{8.54}{10.24}\selectfont 5}}
\pgfputat{\pgfxy(270.11,91.93)}{\pgfbox[bottom,left]{\fontsize{8.54}{10.24}\selectfont 6}}
\pgfputat{\pgfxy(259.57,71.87)}{\pgfbox[bottom,left]{\fontsize{8.54}{10.24}\selectfont 4}}
\pgfputat{\pgfxy(230.73,72.49)}{\pgfbox[bottom,left]{\fontsize{11.95}{14.34}\selectfont $\Rightarrow$}}
\end{pgfpicture}%
\]

The analogous  result for the R-word can be proved using the same method with the bijection $\delta'$ between $\mathcal{RW}_n$ and $\Tn$ present in \cite{MW09}.
\end{proof}

\subsection{Interpretations in U-words} \label{s-sequence}

We introduce here two  new  Entringer familes.

 \begin{defn}
 A \emph{U-word of length $n$} is a sequence $u=(u_i)_{1 \leq i \leq n}$ 
 such that $u_1=1$ and $u_i+u_{i-1} \leq i$ for $i \in \{2,\ldots,n \}$. We denote by $\mathcal{UW}_n$ the set of U-words of length $n$.
\end{defn}

For example, the U-words of length $4$ are:
\[ 1 \, 1 \, 1 \, 1, \quad 1 \, 1 \, 1 \, 2, \quad 1 \, 1 \, 1 \, 3, \quad 1 \, 1 \, 2 \, 1, \quad 1 \, 1 \, 2 \, 2. \]

Denote by $\mathcal{UW}_{n,k}$ the set of U-words $(u_i) \in \mathcal{UW}_n$ such that $u_n=n+1-k$.
\begin{thm} \label{thmunk}
 The sequence $(\mathcal{UW}_{n,k})_{1 \leq k \leq n}$ is an Entringer family.
\end{thm}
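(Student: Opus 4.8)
The plan is to show that the sequence $\bigl(|\mathcal{UW}_{n,k}|\bigr)_{1\le k\le n}$ satisfies the defining recurrence \eqref{eq1} of the Entringer numbers, and then conclude by Theorem~\ref{Ent1}. First I would dispose of the boundary values. We have $\mathcal{UW}_{1,1}=\{(1)\}$, so $|\mathcal{UW}_{1,1}|=1=E_{1,1}$. For $n\ge 2$, every $u\in\mathcal{UW}_n$ satisfies $u_{n-1}\ge 1$ and $u_{n-1}+u_n\le n$, hence $u_n\le n-1$; since $u_n=n+1-k$, this means $\mathcal{UW}_{n,1}=\emptyset$, so $|\mathcal{UW}_{n,1}|=0=E_{n,1}$.

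The heart of the proof is a bijective verification of the identity
\[
|\mathcal{UW}_{n,k}| \;=\; |\mathcal{UW}_{n,k-1}| \;+\; |\mathcal{UW}_{n-1,\,n+1-k}|
\qquad (n\ge 2,\ 2\le k\le n),
\]
obtained by splitting $\mathcal{UW}_{n,k}$ according to the value of the penultimate letter. Let $u=(u_1,\dots,u_n)\in\mathcal{UW}_{n,k}$, so that $u_n=n+1-k$ and the constraint $u_{n-1}+u_n\le n$ reads $u_{n-1}\le k-1$. If $u_{n-1}=k-1$, then deleting the last letter gives $(u_1,\dots,u_{n-1})$, a U-word of length $n-1$ whose last letter is $k-1=(n-1)+1-(n+1-k)$, i.e.\ an element of $\mathcal{UW}_{n-1,\,n+1-k}$; conversely, appending $n+1-k$ to any member of $\mathcal{UW}_{n-1,\,n+1-k}$ is legal because $(k-1)+(n+1-k)=n$, and this is inverse to the deletion. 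If instead $u_{n-1}\le k-2$, then replacing $u_n$ by $n+2-k$ is legal since $u_{n-1}+(n+2-k)\le(k-2)+(n+2-k)=n$, and yields an element of $\mathcal{UW}_{n,k-1}$ whose penultimate letter is automatically $\le k-2$; replacing the last letter back by $n+1-k$ is clearly inverse to this. Since the conditions $u_{n-1}=k-1$ and $u_{n-1}\le k-2$ partition $\mathcal{UW}_{n,k}$, the identity follows.

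With the base cases and this recurrence in hand, a straightforward double induction — outer on $n$, inner on $k$ from $k=1$ upward, noting that the recurrence expresses $|\mathcal{UW}_{n,k}|$ in terms of $|\mathcal{UW}_{n,k-1}|$ and $|\mathcal{UW}_{n-1,n+1-k}|$ — gives $|\mathcal{UW}_{n,k}|=E_{n,k}$ for all $1\le k\le n$, which is exactly the assertion of Theorem~\ref{thmunk}. I do not expect a genuine obstacle; the only point requiring care is checking that the two maps used in the case split are honest mutually inverse bijections onto the claimed sets, which reduces to re-verifying the inequality $u_{n-1}+u_n\le n$ after each modification of the last letter. It is worth remarking that this decomposition runs exactly parallel to the splitting of $\mathcal{ES}_{n,k}$ in Proposition~\ref{s-rmq2}, with the terminal pair $(u_{n-1},u_n)$ of a U-word playing the role of the initial domino of an encoding sequence; one could therefore upgrade the argument into an explicit bijection $\mathcal{UW}_{n,k}\to\mathcal{ES}_{n,k}$ built recursively by reading U-words from right to left.
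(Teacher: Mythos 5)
Your proof is correct, but it takes a genuinely different route from the paper. The paper proves this theorem by constructing an explicit bijection $\gamma:\mathcal{DU}_{n,k}\to\mathcal{UW}_{n,k}$: reading a down-up permutation $\pi$ from left to right, one records at each step the rank of $\pi_i$ among the not-yet-read letters (counted from the top at odd positions and from the bottom at even positions), and then reverses the resulting word; the paper checks that this lands in $\mathcal{UW}_{n,k}$ and exhibits the inverse. You instead verify the Seidel--Entringer recurrence \eqref{eq1} directly on the sets $\mathcal{UW}_{n,k}$ by a local surgery on the last two letters, splitting according to whether $u_{n-1}=k-1$ (delete the last letter, landing in $\mathcal{UW}_{n-1,n+1-k}$) or $u_{n-1}\le k-2$ (bump $u_n$ up by one, landing in $\mathcal{UW}_{n,k-1}$); the key observation that every word in $\mathcal{UW}_{n,k-1}$ automatically has penultimate letter at most $k-2$ makes the second map surjective, and all the inequality checks you list do go through. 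Your argument is shorter and self-contained, and it exposes the structural parallel with the decomposition of $\mathcal{ES}_{n,k}$ in Proposition~\ref{s-rmq2}; what it does not deliver is the explicit correspondence with down-up permutations, which the paper reuses immediately afterwards (the same $\gamma$ carries $\mathcal{DU}'_{n,k}$ to $\mathcal{UW}'_{n,k}$ in the proof of the next theorem), and which is more in the spirit of the paper's stated aim of replacing recurrence proofs by bijections. One minor point: your claim $u_{n-1}\ge 1$ uses the (implicit, but clear from the examples and from finiteness of $\mathcal{UW}_n$) convention that the entries of a U-word are positive integers.
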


\begin{proof}
For any finite set $X$, let $\# X$ denotes its  cardinality.
For $\pi \in \mathcal{DU}_{n,k}$,
let $\gamma(\pi)=w^{RW}$, where  $w=w_{1}\ldots w_{n}$ is the  word defined by
$$
w_{i}=\left\lbrace\begin{array}{ll}
\#\{j \geq \pi_{i}, j \not\in \{\pi_{1},\pi_{2},\ldots,\pi_{i-1}\} \},&\quad \textrm{if $i$ is odd},\\
\#\{j \leq \pi_{i}, j \not\in \{\pi_{1},\pi_{2},\ldots,\pi_{i-1}\} \},&\quad \textrm{if $i$ is even}.
\end{array}\right.
$$
For example, if $\pi= 6 \, 3 \, 5 \, 1 \, 7 \, 2 \, 4 \in \mathcal{DU}_{7,6}$, then the word $w$ is computed as follows:
\begin{itemize}
\item $\{ j \geq 6 \} = \{ 6,7 \}$, so $w_1=2$,
\item $\{ j \leq 3, j \neq 6 \} = \{1,2,3 \}$, so $w_2=3$,
\item $\{ j \geq 5, j \not\in\{3,6 \} \} = \{5,7 \}$, so $w_3=2$,
\item $\{ j \leq 1, j \not\in\{3,5,6\} \} = \{1\}$, so $w_4=1$,
\item $\{ j \geq 7, j \not\in\{1,3,5,6 \} \} = \{7 \}$, so $w_5=1$,
\item $\{ j \leq 2, j \not\in\{1,3,5,6,7 \} \} = \{2 \}$, so $w_6=1$,
\item $\{ j \geq 4, j \not\in\{1,2,3,5,6,7 \} \} = \{4 \}$, so $w_7=1$,
\end{itemize}
Then, $w= 2 \, 3 \, 2 \, 1 \, 1 \, 1 \, 1$ and $\gamma(\pi) = 1 \, 1 \, 1 \, 1 \, 2 \, 3 \, 2$.

We show that the mapping  $\gamma$ is a bijection between  $\Ank$ and $\mathcal{UW}_{n,k}$.
Following the construction, $\gamma(\pi)_n=w_1 = n+1-\pi_1=n+1-k$. Moreover, when  $\gamma(\pi)_i=w_{n+1-i}$ is written, $n-i$ elements have been read in $\pi$ before, thus the number of elements counted by $\gamma(\pi)_i$ must be less than $i$. Moreover, the numbers counted by $\gamma(\pi)_{i-1}$ and $\gamma(\pi)_i$ are in the $n-i$ elements that have not been read in $\pi$ and are two disjoint sets since $\pi$ is down-up. Thus $\gamma(\pi)_i+\gamma(\pi)_{i-1}$ must be less than $i$. Finally, $\gamma(\pi) \in \mathcal{UW}_{n,k}$.

Conversely, if $u \in \mathcal{UW}_{n,k}$, the permutation $\pi=\gamma^{-1}(u) \in \Ank$ can be recovered with:
\begin{itemize}
\item $\pi_1=n+1-u_n$
\item $\forall n \geq 1$, $\pi_{2i}$ is the $u_{n-2i+1}$-st smallest element in $[n] \setminus \{ \pi_1,\ldots,\pi_{2i-1} \}$.
\item $\forall n \geq 1$, $\pi_{2i+1}$ is the $u_{n-2i}$-st greatest element in $[n] \setminus \{ \pi_1,\ldots,\pi_{2i} \}$.
\end{itemize}
We are done.
\end{proof}

Denote by $\mathcal{UW}'_{n,k}$ the set of U-words $(u_i) \in \mathcal{UW}_n$ such that $u_{n-1}+u_n=k$.

\begin{thm} \label{thmvnk}
The sequence $(\mathcal{UW}'_{n,k})_{1 \leq k \leq n}$ is an Entringer family.
\end{thm}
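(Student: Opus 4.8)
The plan is to reuse the bijection $\gamma$ constructed in the proof of Theorem~\ref{thmunk}, but to track this time the statistic $u_{n-1}+u_n$ instead of $u_n$. Recall that $\gamma(\pi)$ is the reversal of the word $w=w_1\cdots w_n$ defined there, so that $\gamma(\pi)_n=w_1$ and $\gamma(\pi)_{n-1}=w_2$. For an arbitrary $\pi\in\mathcal{DU}_n$ (we take $n\ge2$, the statement being vacuous for $n=1$) I would first evaluate these two letters directly from the definition of $w$: since no element has been read before $\pi_1$, one gets $w_1=\#\{j\ge\pi_1\}=n+1-\pi_1$; and since $\pi$ is down-up, $\pi_1>\pi_2$, hence $\pi_1\notin\{1,\dots,\pi_2\}$ and therefore $w_2=\#\{j\le\pi_2,\ j\ne\pi_1\}=\pi_2$. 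Consequently
\[
  \gamma(\pi)_{n-1}+\gamma(\pi)_n \;=\; \pi_2+(n+1-\pi_1)\;=\;n+1-(\pi_1-\pi_2).
\]

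Next I would deduce that $\gamma$ restricts to a bijection from $\mathcal{DU}'_{n,k}$ onto $\mathcal{UW}'_{n,k}$. Indeed, $\pi\in\mathcal{DU}'_{n,k}$ means exactly $\pi_1-\pi_2=n+1-k$, which by the displayed identity is equivalent to $\gamma(\pi)_{n-1}+\gamma(\pi)_n=k$, i.e.\ to $\gamma(\pi)\in\mathcal{UW}'_{n,k}$. Using the explicit inverse $\gamma^{-1}$ from the proof of Theorem~\ref{thmunk} --- which, given any U-word $u$, sets $k=n+1-u_n$ and returns a permutation $\gamma^{-1}(u)\in\mathcal{DU}_{n,k}\subset\mathcal{DU}_n$ with $\gamma(\gamma^{-1}(u))=u$ --- the same equivalence shows that for every $u\in\mathcal{UW}'_{n,k}$ the permutation $\gamma^{-1}(u)$ lies in $\mathcal{DU}'_{n,k}$. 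Hence $\gamma$ and $\gamma^{-1}$ exhibit a bijection $\mathcal{DU}'_{n,k}\to\mathcal{UW}'_{n,k}$.

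Finally, I would conclude by invoking the corollary to Theorem~\ref{thm6}, which states that $(\mathcal{DU}'_{n,k})_{1\le k\le n}$ is an Entringer family; combined with the bijection just produced this yields $\#\mathcal{UW}'_{n,k}=\#\mathcal{DU}'_{n,k}=E_{n,k}$, which is the assertion. I do not expect any genuine obstacle here: the argument is short, and the only point requiring care is the routine evaluation of the first two letters of $w$ as $n+1-\pi_1$ and $\pi_2$ --- the second one using the down-up inequality $\pi_1>\pi_2$ --- together with the bookkeeping that $\gamma$ and the map described in the proof of Theorem~\ref{thmunk} are mutually inverse on the relevant subsets.
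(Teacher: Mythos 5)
Your argument is correct and coincides with the first of the two proofs the paper gives: restrict $\gamma$ to $\mathcal{DU}'_{n,k}$, compute $\gamma(\pi)_n=n+1-\pi_1$ and $\gamma(\pi)_{n-1}=\pi_2$ so that their sum is $n+1-(\pi_1-\pi_2)=k$, and invoke the fact that $(\mathcal{DU}'_{n,k})$ is an Entringer family. (The paper also records a second, purely internal bijection $\alpha:\mathcal{UW}_{n,k}\to\mathcal{UW}'_{n,k}$, but your route is exactly its first argument.)
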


\begin{proof}
There are two possibilities to prove this  result.

Firstly, the mapping $\gamma$ also induces a bijection between $\mathcal{DU}'_{n,k}$ and $\mathcal{UW}'_{n,k}$. For $\pi \in \mathcal{DU}_{n,k}'$, there exists $j \in [n]$ such that $\pi \in \mathcal{DU}_{n,j}$, so we can define $v=\gamma(\pi) \in \mathcal{UW}_{n,j} \subset \mathcal{UW}_n$. It suffices to show that $v \in \mathcal{UW}_{n,k}'$. In the construction of $\gamma(\pi)$, $v_n$ is the number of elements that are greater than $\pi_1$, and $v_{n-1}$ is the number of elements that are less than $\pi_2$. Then $v_n=n+1-\pi_1$ and $v_{n-1}=\pi_2$, and $v_{n-1}+v_n=n+1-(\pi_1-\pi_2)=k$ since $\pi \in \mathcal{DU}'_{n,k}$.

Secondly, it is easy to construct a bijection $\alpha: \mathcal{UW}_{n,k}\longrightarrow  \mathcal{UW}_{n,k}'$.
For $u=(u_1,\ldots,u_n) \in \mathcal{UW}_{n,k}$, let  $\alpha(u)=(u_1,u_2,\ldots,u_{n-1},n+1-u_{n-1}-u_{n})$. 
Since $u \in \mathcal{UW}_{n,k}$, $u_{n} - u_{n-1} \leq n+1$, so we have $\alpha(u) \in \mathcal{UW}_n$. 
Moreover, the last element $\alpha(u)_n=n+1-u_{n-1}-(n+1-k)=k-u_{n-1}=k-\alpha(u)_{n-1}$, so $\alpha(u) \in \mathcal{UW}_{n,k}'$. 
The mapping $\alpha$ is then clearly a bijection between $\mathcal{UW}_{n,k}$ and $\mathcal{UW}_{n,k}'$.
\end{proof}

It follows immediately from  the above theorems that the Euler number $E_{n}$ is the number of U-words of length $n$ for all integer $n\geq 1$.
\section{Concluding remarks}
\subsection{List of  bijections for Entringer families}

In what follows, we list all the twelve interpretations for Entringer families along with the bijections dicussed in this paper:
\begin{enumerate}
\item the permutation $\pi \in \Ank$ such that $\pi_1=k$,
\item the encoding sequence $\Delta \in \ESnk$, obtained by $\Delta=\psi(\pi)$, where $\psi$ is the bijection described in Section~\ref{s-dnk}, then $k$ is the first element read in $\Delta$,
\item the binary  increasing increasing tree $T \in \mathcal{BT}_{n,k}$, obtained by $T=\varphi(\Delta)$, where $\varphi$ is the bijection described in Section~\ref{s-bij}, then $k$ is the leaf of the minimal path of $T$,
\item the binary  increasing increasing tree $T' \in \mathcal{BT}_{n,k}'$, obtained by $T'=\varphi'(T)$, where $\varphi'$ is the bijection described in \cite[\S 6]{KPP94}, then $k-1$ is the parent of $n$ in $T'$,
\item the down-up permutation $\sigma \in \Ank'$, obtained by $\sigma=\theta(\pi)$, where $\theta$ is the bijection described in Subsection~\ref{s-theta}, then $k=n+1-\sigma_1+\sigma_2$,
\item the min-max alternating permutation $\sigma' \in \mathcal{MM}_{n,k}$, obtained by $\sigma'=\beta(\sigma)$, where $\beta$ is the bijection described in Subsection~\ref{s-direct}, then $k=n+1-|\sigma_1-\sigma_2|$,
\item the min-max alternating permutation $\tau_1 \in \mathcal{MM}_{n,k}'$, obtained by $\tau_1=\rho(\pi)$, where $\rho$ is the bijection described in Subsection~\ref{s-direct}, then $k$ is the term immediately before $1$ (or $n$ if $\tau_1$ starts with $1$),
\item the min-max alternating permutation $\tau_2 \in \mathcal{MM}_{n,k}''$, obtained by $\tau_2=\rho'(\tau_2)$, where $\rho'$ is the bijection described in Subsection~\ref{s-direct}, then $n+1-k$ is the term immediately after $n$ (or $1$ if $\tau_2$ ends with $n$),
\item the G-word $\pi' \in \mathcal{GW}_{n,k}$, obtained by $\pi'=\delta^{-1}(T)$, where $\delta$ is the bijection described in Subsection~\ref{s-gword}, then $n+1-k$ is the end of the route of $\pi'$,
\item the R-word $\pi'' \in \mathcal{RW}_{n,k}$, obtained by $\pi''=(\delta')^{-1}(T)$, where $\delta'$ is the bijection described in Subsection~\ref{s-gword}, then $n+1-k$ is the end of the route of $\pi'$,
\item the sequence $u \in \mathcal{UW}_{n,k}$, obtained by $u=\gamma(\pi)$, where $\gamma$ is the bijection described in Subsection~\ref{s-sequence}, then $n+1-k$ is the last element of $u$,
\item the sequence $v \in \mathcal{UW}_{n,k}'$, obtained by $v=\gamma(\sigma)=\alpha(u)$, where $\alpha$ and $\gamma$ are the bijections described in Subsection~\ref{s-sequence}, then $k$ is the sum of the two last elements of $v$.
\end{enumerate}

We summarize the bijections of this  paper in the diagram of Figure~\ref{fig:diag}, where at the left we gather all the models in down-up permutations, and at the right we gather the models in the increasing trees.

\begin{figure}[h]
\[
\xymatrix{
   \mathcal{UW}_{n,k}' \ar@{}[drr]|*+\txt{\LARGE$\circlearrowleft$}
&& \mathcal{UW}_{n,k} \ar[ll]_{\alpha}
&&
&& \mathcal{GW}_{n,k} \ar[d]^{\delta}\\
   \mathcal{DU}_{n,k}' \ar[d]^{\beta} \ar[u]_{\gamma}
&& \mathcal{DU}_{n,k} \ar[rrd]^{\psi} \ar[ll]_{\theta} \ar[u]_{\gamma} \ar[d]^{\rho} \ar[rrrr]^{\Psi}
&&
&& \mathcal{BT}_{n,k} \ar[rr]^{\varphi'}
&& \mathcal{BT}_{n,k}' \\
   \mathcal{MM}_{n,k}
&& \mathcal{MM}'_{n,k} \ar[d]^{\rho'}
&& \ESnk\ar[rru]^{\varphi}
&& \mathcal{RW}_{n,k} \ar[u]_{\delta'}
&& \\
&& \mathcal{MM}''_{n,k}
}
\]
\caption{\label{fig:diag}The bijections mentioned in the paper}
\end{figure}

\subsection{Illustration for $n=4$}

In Figure~\ref{fig:E4}, we summarize twelve interpretations for $E_{4,k}$, $k \in \{2,3,4\}$. In every column, the corresponding elements are described via the different bijections mentioned in the paper.
Moreover, in the table, boxes point out the statistic $k=\pi_1$ if $\pi \in \mathcal{DU}_{n,k}$ and the corresponding statistics in the other models.
\begin{figure}[p]
 \renewcommand{\arraystretch}{1.8}
\begin{tabular}{|c|c|c|c|c|c|c|}
\hline
&$k$ & $2$ & \multicolumn{2}{|c|}{$3$} & \multicolumn{2}{|c|}{$4$} \\
\hline
(1)&$\pi \in \A_{4,k}$ & $\boxed{2} 1 \, 4 \, 3$ & $\boxed{3} 2 \, 4 \, 1$ & $\boxed{3} 1 \, 4 \, 2$ & $\boxed{4} 2 \, 3 \, 1$ & $\boxed{4} 1 \, 3 \, 2$ \\
\hline
(2)&$\Delta \in \mathcal{ES}_{4,k}$ & $\begin{array}{c} (\boxed{2},1)^{*}  \\ (4,3)^{*}  \end{array}$ & $\begin{array}{c} (\boxed{3},2)^{*}  \\ (4,1)^{*}  \end{array}$ & $\begin{array}{c} (\boxed{3},2) \\ (2,1)^{*}  \\ (4,3)^{*}  \end{array}$ & $\begin{array}{c} (\boxed{4},3) \\ (3,2)^{*}  \\ (4,1)^{*}  \end{array}$ & $\begin{array}{c} (\boxed{4},3) \\ (3,2) \\ (2,1)^{*}  \\ (4,3)^{*}  \end{array}$ \\
\hline
\centering
\begin{pgfpicture}{4.03mm}{6.36mm}{10.43mm}{21.62mm}
\pgfsetxvec{\pgfpoint{0.70mm}{0mm}}
\pgfsetyvec{\pgfpoint{0mm}{0.70mm}}
\color[rgb]{0,0,0}\pgfsetlinewidth{0.30mm}\pgfsetdash{}{0mm}
\pgfputat{\pgfxy(7.43,17.95)}{\pgfbox[bottom,left]{\fontsize{9.96}{11.95}\selectfont $(3)$}}
\end{pgfpicture}&
\centering
\begin{pgfpicture}{4.03mm}{6.36mm}{22.43mm}{21.62mm}
\pgfsetxvec{\pgfpoint{0.70mm}{0mm}}
\pgfsetyvec{\pgfpoint{0mm}{0.70mm}}
\color[rgb]{0,0,0}\pgfsetlinewidth{0.30mm}\pgfsetdash{}{0mm}
\pgfputat{\pgfxy(11.43,17.95)}{\pgfbox[bottom,left]{\fontsize{9.96}{11.95}\selectfont $T \in \mathcal{BT}_{4,k}$}}
\end{pgfpicture}%
&
\centering
\begin{pgfpicture}{1.83mm}{3.70mm}{15.30mm}{17.50mm}
\pgfsetxvec{\pgfpoint{0.70mm}{0mm}}
\pgfsetyvec{\pgfpoint{0mm}{0.70mm}}
\color[rgb]{0,0,0}\pgfsetlinewidth{0.30mm}\pgfsetdash{}{0mm}
\pgfcircle[fill]{\pgfxy(10.00,10.00)}{0.70mm}
\pgfcircle[stroke]{\pgfxy(10.00,10.00)}{0.70mm}
\pgfcircle[fill]{\pgfxy(10.00,15.00)}{0.70mm}
\pgfcircle[stroke]{\pgfxy(10.00,15.00)}{0.70mm}
\pgfcircle[fill]{\pgfxy(15.00,15.00)}{0.70mm}
\pgfcircle[stroke]{\pgfxy(15.00,15.00)}{0.70mm}
\pgfcircle[fill]{\pgfxy(15.00,20.00)}{0.70mm}
\pgfcircle[stroke]{\pgfxy(15.00,20.00)}{0.70mm}
\pgfmoveto{\pgfxy(10.00,15.00)}\pgflineto{\pgfxy(10.00,10.00)}\pgfstroke
\pgfmoveto{\pgfxy(10.00,10.00)}\pgflineto{\pgfxy(15.00,15.00)}\pgfstroke
\pgfmoveto{\pgfxy(15.00,20.00)}\pgflineto{\pgfxy(15.00,15.00)}\pgfstroke
\pgfputat{\pgfxy(8.00,14.00)}{\pgfbox[bottom,left]{\fontsize{7.97}{9.56}\selectfont \makebox[0pt][r]{2}}}
\pgfputat{\pgfxy(8.00,9.00)}{\pgfbox[bottom,left]{\fontsize{7.97}{9.56}\selectfont \makebox[0pt][r]{1}}}
\pgfputat{\pgfxy(17.00,14.00)}{\pgfbox[bottom,left]{\fontsize{7.97}{9.56}\selectfont 3}}
\pgfputat{\pgfxy(17.00,19.00)}{\pgfbox[bottom,left]{\fontsize{7.97}{9.56}\selectfont 4}}
\pgfsetlinewidth{0.15mm}\pgfmoveto{\pgfxy(5.47,13.36)}\pgflineto{\pgfxy(8.52,13.36)}\pgflineto{\pgfxy(8.52,17.27)}\pgflineto{\pgfxy(5.47,17.27)}\pgfclosepath\pgfstroke
\end{pgfpicture}%
&
\centering
\begin{pgfpicture}{1.86mm}{3.70mm}{15.30mm}{17.61mm}
\pgfsetxvec{\pgfpoint{0.70mm}{0mm}}
\pgfsetyvec{\pgfpoint{0mm}{0.70mm}}
\color[rgb]{0,0,0}\pgfsetlinewidth{0.30mm}\pgfsetdash{}{0mm}
\pgfcircle[fill]{\pgfxy(10.00,10.00)}{0.70mm}
\pgfcircle[stroke]{\pgfxy(10.00,10.00)}{0.70mm}
\pgfcircle[fill]{\pgfxy(10.00,15.00)}{0.70mm}
\pgfcircle[stroke]{\pgfxy(10.00,15.00)}{0.70mm}
\pgfcircle[fill]{\pgfxy(10.00,20.00)}{0.70mm}
\pgfcircle[stroke]{\pgfxy(10.00,20.00)}{0.70mm}
\pgfcircle[fill]{\pgfxy(15.00,20.00)}{0.70mm}
\pgfcircle[stroke]{\pgfxy(15.00,20.00)}{0.70mm}
\pgfmoveto{\pgfxy(10.00,15.00)}\pgflineto{\pgfxy(10.00,10.00)}\pgfstroke
\pgfmoveto{\pgfxy(10.00,15.00)}\pgflineto{\pgfxy(15.00,20.00)}\pgfstroke
\pgfmoveto{\pgfxy(10.00,20.00)}\pgflineto{\pgfxy(10.00,15.00)}\pgfstroke
\pgfputat{\pgfxy(8.00,14.00)}{\pgfbox[bottom,left]{\fontsize{7.97}{9.56}\selectfont \makebox[0pt][r]{2}}}
\pgfputat{\pgfxy(8.00,9.00)}{\pgfbox[bottom,left]{\fontsize{7.97}{9.56}\selectfont \makebox[0pt][r]{1}}}
\pgfputat{\pgfxy(8.00,19.00)}{\pgfbox[bottom,left]{\fontsize{7.97}{9.56}\selectfont \makebox[0pt][r]{3}}}
\pgfputat{\pgfxy(17.00,19.00)}{\pgfbox[bottom,left]{\fontsize{7.97}{9.56}\selectfont 4}}
\pgfsetlinewidth{0.15mm}\pgfmoveto{\pgfxy(5.52,18.38)}\pgflineto{\pgfxy(8.28,18.38)}\pgflineto{\pgfxy(8.28,22.30)}\pgflineto{\pgfxy(5.52,22.30)}\pgfclosepath\pgfstroke
\end{pgfpicture}%
 &
 \centering
\begin{pgfpicture}{1.77mm}{3.70mm}{15.30mm}{17.60mm}
\pgfsetxvec{\pgfpoint{0.70mm}{0mm}}
\pgfsetyvec{\pgfpoint{0mm}{0.70mm}}
\color[rgb]{0,0,0}\pgfsetlinewidth{0.30mm}\pgfsetdash{}{0mm}
\pgfcircle[fill]{\pgfxy(10.00,10.00)}{0.70mm}
\pgfcircle[stroke]{\pgfxy(10.00,10.00)}{0.70mm}
\pgfcircle[fill]{\pgfxy(10.00,15.00)}{0.70mm}
\pgfcircle[stroke]{\pgfxy(10.00,15.00)}{0.70mm}
\pgfcircle[fill]{\pgfxy(10.00,20.00)}{0.70mm}
\pgfcircle[stroke]{\pgfxy(10.00,20.00)}{0.70mm}
\pgfcircle[fill]{\pgfxy(15.00,15.00)}{0.70mm}
\pgfcircle[stroke]{\pgfxy(15.00,15.00)}{0.70mm}
\pgfmoveto{\pgfxy(10.00,15.00)}\pgflineto{\pgfxy(10.00,10.00)}\pgfstroke
\pgfmoveto{\pgfxy(10.00,10.00)}\pgflineto{\pgfxy(15.00,15.00)}\pgfstroke
\pgfmoveto{\pgfxy(10.00,20.00)}\pgflineto{\pgfxy(10.00,15.00)}\pgfstroke
\pgfputat{\pgfxy(8.00,14.00)}{\pgfbox[bottom,left]{\fontsize{7.97}{9.56}\selectfont \makebox[0pt][r]{2}}}
\pgfputat{\pgfxy(8.00,9.00)}{\pgfbox[bottom,left]{\fontsize{7.97}{9.56}\selectfont \makebox[0pt][r]{1}}}
\pgfputat{\pgfxy(8.00,19.00)}{\pgfbox[bottom,left]{\fontsize{7.97}{9.56}\selectfont \makebox[0pt][r]{3}}}
\pgfputat{\pgfxy(17.00,14.00)}{\pgfbox[bottom,left]{\fontsize{7.97}{9.56}\selectfont 4}}
\pgfsetlinewidth{0.15mm}\pgfmoveto{\pgfxy(5.38,18.19)}\pgflineto{\pgfxy(8.36,18.19)}\pgflineto{\pgfxy(8.36,22.28)}\pgflineto{\pgfxy(5.38,22.28)}\pgfclosepath\pgfstroke
\end{pgfpicture}%
 &
 \centering
\begin{pgfpicture}{15.58mm}{3.70mm}{23.70mm}{20.91mm}
\pgfsetxvec{\pgfpoint{0.70mm}{0mm}}
\pgfsetyvec{\pgfpoint{0mm}{0.70mm}}
\color[rgb]{0,0,0}\pgfsetlinewidth{0.30mm}\pgfsetdash{}{0mm}
\pgfcircle[fill]{\pgfxy(30.00,10.00)}{0.70mm}
\pgfcircle[stroke]{\pgfxy(30.00,10.00)}{0.70mm}
\pgfcircle[fill]{\pgfxy(30.00,15.00)}{0.70mm}
\pgfcircle[stroke]{\pgfxy(30.00,15.00)}{0.70mm}
\pgfcircle[fill]{\pgfxy(30.00,20.00)}{0.70mm}
\pgfcircle[stroke]{\pgfxy(30.00,20.00)}{0.70mm}
\pgfcircle[fill]{\pgfxy(30.00,25.00)}{0.70mm}
\pgfcircle[stroke]{\pgfxy(30.00,25.00)}{0.70mm}
\pgfmoveto{\pgfxy(30.00,15.00)}\pgflineto{\pgfxy(30.00,10.00)}\pgfstroke
\pgfputat{\pgfxy(28.00,14.00)}{\pgfbox[bottom,left]{\fontsize{7.97}{9.56}\selectfont \makebox[0pt][r]{2}}}
\pgfputat{\pgfxy(28.00,9.00)}{\pgfbox[bottom,left]{\fontsize{7.97}{9.56}\selectfont \makebox[0pt][r]{1}}}
\pgfputat{\pgfxy(28.00,18.50)}{\pgfbox[bottom,left]{\fontsize{7.97}{9.56}\selectfont \makebox[0pt][r]{3}}}
\pgfputat{\pgfxy(27.84,23.48)}{\pgfbox[bottom,left]{\fontsize{7.97}{9.56}\selectfont \makebox[0pt][r]{4}}}
\pgfmoveto{\pgfxy(30.00,20.00)}\pgflineto{\pgfxy(30.00,15.00)}\pgfstroke
\pgfmoveto{\pgfxy(30.00,25.00)}\pgflineto{\pgfxy(30.00,20.00)}\pgfstroke
\pgfsetlinewidth{0.15mm}\pgfmoveto{\pgfxy(25.12,22.55)}\pgflineto{\pgfxy(28.38,22.55)}\pgflineto{\pgfxy(28.38,27.01)}\pgflineto{\pgfxy(25.12,27.01)}\pgfclosepath\pgfstroke
\end{pgfpicture}%
&
$
\centering
\begin{pgfpicture}{15.84mm}{3.70mm}{29.30mm}{17.68mm}
\pgfsetxvec{\pgfpoint{0.70mm}{0mm}}
\pgfsetyvec{\pgfpoint{0mm}{0.70mm}}
\color[rgb]{0,0,0}\pgfsetlinewidth{0.30mm}\pgfsetdash{}{0mm}
\pgfcircle[fill]{\pgfxy(30.00,10.00)}{0.70mm}
\pgfcircle[stroke]{\pgfxy(30.00,10.00)}{0.70mm}
\pgfcircle[fill]{\pgfxy(30.00,15.00)}{0.70mm}
\pgfcircle[stroke]{\pgfxy(30.00,15.00)}{0.70mm}
\pgfcircle[fill]{\pgfxy(30.00,20.00)}{0.70mm}
\pgfcircle[stroke]{\pgfxy(30.00,20.00)}{0.70mm}
\pgfcircle[fill]{\pgfxy(35.00,15.00)}{0.70mm}
\pgfcircle[stroke]{\pgfxy(35.00,15.00)}{0.70mm}
\pgfmoveto{\pgfxy(30.00,15.00)}\pgflineto{\pgfxy(30.00,10.00)}\pgfstroke
\pgfmoveto{\pgfxy(30.00,10.00)}\pgflineto{\pgfxy(35.00,15.00)}\pgfstroke
\pgfmoveto{\pgfxy(30.00,15.00)}\pgflineto{\pgfxy(30.00,20.00)}\pgfstroke
\pgfputat{\pgfxy(28.00,14.00)}{\pgfbox[bottom,left]{\fontsize{7.97}{9.56}\selectfont \makebox[0pt][r]{2}}}
\pgfputat{\pgfxy(28.00,9.00)}{\pgfbox[bottom,left]{\fontsize{7.97}{9.56}\selectfont \makebox[0pt][r]{1}}}
\pgfputat{\pgfxy(28.00,19.00)}{\pgfbox[bottom,left]{\fontsize{7.97}{9.56}\selectfont \makebox[0pt][r]{4}}}
\pgfputat{\pgfxy(37.00,14.00)}{\pgfbox[bottom,left]{\fontsize{7.97}{9.56}\selectfont 3}}
\pgfsetlinewidth{0.15mm}\pgfmoveto{\pgfxy(25.48,22.41)}\pgflineto{\pgfxy(25.48,17.98)}\pgflineto{\pgfxy(28.58,17.98)}\pgflineto{\pgfxy(28.58,22.41)}\pgfclosepath\pgfstroke
\end{pgfpicture}%
$ \\
 \hline
 \centering
\begin{pgfpicture}{5.03mm}{5.00mm}{10.32mm}{23.03mm}
\pgfsetxvec{\pgfpoint{0.70mm}{0mm}}
\pgfsetyvec{\pgfpoint{0mm}{0.70mm}}
\color[rgb]{0,0,0}\pgfsetlinewidth{0.30mm}\pgfsetdash{}{0mm}
\pgfputat{\pgfxy(8,18.13)}{\pgfbox[bottom,left]{\fontsize{9.96}{11.95}\selectfont $(4)$}}
\end{pgfpicture}&
\centering
\begin{pgfpicture}{5.03mm}{5.00mm}{25.32mm}{23.03mm}
\pgfsetxvec{\pgfpoint{0.70mm}{0mm}}
\pgfsetyvec{\pgfpoint{0mm}{0.70mm}}
\color[rgb]{0,0,0}\pgfsetlinewidth{0.30mm}\pgfsetdash{}{0mm}
\pgfputat{\pgfxy(13.46,18.13)}{\pgfbox[bottom,left]{\fontsize{9.96}{11.95}\selectfont $T' \in \mathcal{BT}_{4,k}'$}}
\end{pgfpicture}%
&
\centering
\begin{pgfpicture}{1.81mm}{3.70mm}{15.30mm}{17.50mm}
\pgfsetxvec{\pgfpoint{0.70mm}{0mm}}
\pgfsetyvec{\pgfpoint{0mm}{0.70mm}}
\color[rgb]{0,0,0}\pgfsetlinewidth{0.30mm}\pgfsetdash{}{0mm}
\pgfcircle[fill]{\pgfxy(10.00,10.00)}{0.70mm}
\pgfcircle[stroke]{\pgfxy(10.00,10.00)}{0.70mm}
\pgfcircle[fill]{\pgfxy(10.00,15.00)}{0.70mm}
\pgfcircle[stroke]{\pgfxy(10.00,15.00)}{0.70mm}
\pgfcircle[fill]{\pgfxy(10.00,20.00)}{0.70mm}
\pgfcircle[stroke]{\pgfxy(10.00,20.00)}{0.70mm}
\pgfcircle[fill]{\pgfxy(15.00,15.00)}{0.70mm}
\pgfcircle[stroke]{\pgfxy(15.00,15.00)}{0.70mm}
\pgfmoveto{\pgfxy(10.00,15.00)}\pgflineto{\pgfxy(10.00,10.00)}\pgfstroke
\pgfmoveto{\pgfxy(10.00,10.00)}\pgflineto{\pgfxy(15.00,15.00)}\pgfstroke
\pgfmoveto{\pgfxy(10.00,20.00)}\pgflineto{\pgfxy(10.00,15.00)}\pgfstroke
\pgfputat{\pgfxy(8.00,14.00)}{\pgfbox[bottom,left]{\fontsize{7.97}{9.56}\selectfont \makebox[0pt][r]{2}}}
\pgfputat{\pgfxy(8.00,9.00)}{\pgfbox[bottom,left]{\fontsize{7.97}{9.56}\selectfont \makebox[0pt][r]{1}}}
\pgfputat{\pgfxy(8.00,19.00)}{\pgfbox[bottom,left]{\fontsize{7.97}{9.56}\selectfont \makebox[0pt][r]{3}}}
\pgfputat{\pgfxy(17.00,14.00)}{\pgfbox[bottom,left]{\fontsize{7.97}{9.56}\selectfont 4}}
\pgfsetlinewidth{0.15mm}\pgfmoveto{\pgfxy(5.44,8.43)}\pgflineto{\pgfxy(8.42,8.43)}\pgflineto{\pgfxy(8.42,12.51)}\pgflineto{\pgfxy(5.44,12.51)}\pgfclosepath\pgfstroke
\end{pgfpicture}%
 &
 \centering
\begin{pgfpicture}{1.92mm}{3.70mm}{15.30mm}{17.50mm}
\pgfsetxvec{\pgfpoint{0.70mm}{0mm}}
\pgfsetyvec{\pgfpoint{0mm}{0.70mm}}
\color[rgb]{0,0,0}\pgfsetlinewidth{0.30mm}\pgfsetdash{}{0mm}
\pgfcircle[fill]{\pgfxy(10.00,10.00)}{0.70mm}
\pgfcircle[stroke]{\pgfxy(10.00,10.00)}{0.70mm}
\pgfcircle[fill]{\pgfxy(10.00,15.00)}{0.70mm}
\pgfcircle[stroke]{\pgfxy(10.00,15.00)}{0.70mm}
\pgfcircle[fill]{\pgfxy(10.00,20.00)}{0.70mm}
\pgfcircle[stroke]{\pgfxy(10.00,20.00)}{0.70mm}
\pgfcircle[fill]{\pgfxy(15.00,20.00)}{0.70mm}
\pgfcircle[stroke]{\pgfxy(15.00,20.00)}{0.70mm}
\pgfmoveto{\pgfxy(10.00,15.00)}\pgflineto{\pgfxy(10.00,10.00)}\pgfstroke
\pgfmoveto{\pgfxy(10.00,15.00)}\pgflineto{\pgfxy(15.00,20.00)}\pgfstroke
\pgfmoveto{\pgfxy(10.00,20.00)}\pgflineto{\pgfxy(10.00,15.00)}\pgfstroke
\pgfputat{\pgfxy(8.00,14.00)}{\pgfbox[bottom,left]{\fontsize{7.97}{9.56}\selectfont \makebox[0pt][r]{2}}}
\pgfputat{\pgfxy(8.00,9.00)}{\pgfbox[bottom,left]{\fontsize{7.97}{9.56}\selectfont \makebox[0pt][r]{1}}}
\pgfputat{\pgfxy(8.00,19.00)}{\pgfbox[bottom,left]{\fontsize{7.97}{9.56}\selectfont \makebox[0pt][r]{3}}}
\pgfputat{\pgfxy(17.00,19.00)}{\pgfbox[bottom,left]{\fontsize{7.97}{9.56}\selectfont 4}}
\pgfsetlinewidth{0.15mm}\pgfmoveto{\pgfxy(5.60,13.31)}\pgflineto{\pgfxy(8.36,13.31)}\pgflineto{\pgfxy(8.36,17.23)}\pgflineto{\pgfxy(5.60,17.23)}\pgfclosepath\pgfstroke
\end{pgfpicture}%
&
$
\centering
\begin{pgfpicture}{15.78mm}{3.70mm}{29.30mm}{17.50mm}
\pgfsetxvec{\pgfpoint{0.70mm}{0mm}}
\pgfsetyvec{\pgfpoint{0mm}{0.70mm}}
\color[rgb]{0,0,0}\pgfsetlinewidth{0.30mm}\pgfsetdash{}{0mm}
\pgfcircle[fill]{\pgfxy(30.00,10.00)}{0.70mm}
\pgfcircle[stroke]{\pgfxy(30.00,10.00)}{0.70mm}
\pgfcircle[fill]{\pgfxy(30.00,15.00)}{0.70mm}
\pgfcircle[stroke]{\pgfxy(30.00,15.00)}{0.70mm}
\pgfcircle[fill]{\pgfxy(30.00,20.00)}{0.70mm}
\pgfcircle[stroke]{\pgfxy(30.00,20.00)}{0.70mm}
\pgfcircle[fill]{\pgfxy(35.00,15.00)}{0.70mm}
\pgfcircle[stroke]{\pgfxy(35.00,15.00)}{0.70mm}
\pgfmoveto{\pgfxy(30.00,15.00)}\pgflineto{\pgfxy(30.00,10.00)}\pgfstroke
\pgfmoveto{\pgfxy(30.00,10.00)}\pgflineto{\pgfxy(35.00,15.00)}\pgfstroke
\pgfmoveto{\pgfxy(30.00,15.00)}\pgflineto{\pgfxy(30.00,20.00)}\pgfstroke
\pgfputat{\pgfxy(28.00,14.00)}{\pgfbox[bottom,left]{\fontsize{7.97}{9.56}\selectfont \makebox[0pt][r]{2}}}
\pgfputat{\pgfxy(28.00,9.00)}{\pgfbox[bottom,left]{\fontsize{7.97}{9.56}\selectfont \makebox[0pt][r]{1}}}
\pgfputat{\pgfxy(28.00,19.00)}{\pgfbox[bottom,left]{\fontsize{7.97}{9.56}\selectfont \makebox[0pt][r]{4}}}
\pgfputat{\pgfxy(37.00,14.00)}{\pgfbox[bottom,left]{\fontsize{7.97}{9.56}\selectfont 3}}
\pgfsetlinewidth{0.15mm}\pgfmoveto{\pgfxy(25.40,13.12)}\pgflineto{\pgfxy(28.50,13.12)}\pgflineto{\pgfxy(28.50,17.55)}\pgflineto{\pgfxy(25.40,17.55)}\pgfclosepath\pgfstroke
\end{pgfpicture}%
$ &
\centering
\begin{pgfpicture}{15.61mm}{3.70mm}{23.70mm}{20.63mm}
\pgfsetxvec{\pgfpoint{0.70mm}{0mm}}
\pgfsetyvec{\pgfpoint{0mm}{0.70mm}}
\color[rgb]{0,0,0}\pgfsetlinewidth{0.30mm}\pgfsetdash{}{0mm}
\pgfcircle[fill]{\pgfxy(30.00,10.00)}{0.70mm}
\pgfcircle[stroke]{\pgfxy(30.00,10.00)}{0.70mm}
\pgfcircle[fill]{\pgfxy(30.00,15.00)}{0.70mm}
\pgfcircle[stroke]{\pgfxy(30.00,15.00)}{0.70mm}
\pgfcircle[fill]{\pgfxy(30.00,20.00)}{0.70mm}
\pgfcircle[stroke]{\pgfxy(30.00,20.00)}{0.70mm}
\pgfcircle[fill]{\pgfxy(30.00,25.00)}{0.70mm}
\pgfcircle[stroke]{\pgfxy(30.00,25.00)}{0.70mm}
\pgfmoveto{\pgfxy(30.00,15.00)}\pgflineto{\pgfxy(30.00,10.00)}\pgfstroke
\pgfputat{\pgfxy(28.00,14.00)}{\pgfbox[bottom,left]{\fontsize{7.97}{9.56}\selectfont \makebox[0pt][r]{2}}}
\pgfputat{\pgfxy(28.00,9.00)}{\pgfbox[bottom,left]{\fontsize{7.97}{9.56}\selectfont \makebox[0pt][r]{1}}}
\pgfputat{\pgfxy(28.00,18.50)}{\pgfbox[bottom,left]{\fontsize{7.97}{9.56}\selectfont \makebox[0pt][r]{3}}}
\pgfputat{\pgfxy(27.84,23.48)}{\pgfbox[bottom,left]{\fontsize{7.97}{9.56}\selectfont \makebox[0pt][r]{4}}}
\pgfmoveto{\pgfxy(30.00,20.00)}\pgflineto{\pgfxy(30.00,15.00)}\pgfstroke
\pgfmoveto{\pgfxy(30.00,25.00)}\pgflineto{\pgfxy(30.00,20.00)}\pgfstroke
\pgfsetlinewidth{0.15mm}\pgfmoveto{\pgfxy(25.16,17.59)}\pgflineto{\pgfxy(28.42,17.59)}\pgflineto{\pgfxy(28.42,22.04)}\pgflineto{\pgfxy(25.16,22.04)}\pgfclosepath\pgfstroke
\end{pgfpicture}%
&
$
\centering
\begin{pgfpicture}{2.20mm}{3.70mm}{15.68mm}{17.50mm}
\pgfsetxvec{\pgfpoint{0.70mm}{0mm}}
\pgfsetyvec{\pgfpoint{0mm}{0.70mm}}
\color[rgb]{0,0,0}\pgfsetlinewidth{0.30mm}\pgfsetdash{}{0mm}
\pgfcircle[fill]{\pgfxy(10.00,10.00)}{0.70mm}
\pgfcircle[stroke]{\pgfxy(10.00,10.00)}{0.70mm}
\pgfcircle[fill]{\pgfxy(10.00,15.00)}{0.70mm}
\pgfcircle[stroke]{\pgfxy(10.00,15.00)}{0.70mm}
\pgfcircle[fill]{\pgfxy(15.00,15.00)}{0.70mm}
\pgfcircle[stroke]{\pgfxy(15.00,15.00)}{0.70mm}
\pgfcircle[fill]{\pgfxy(15.00,20.00)}{0.70mm}
\pgfcircle[stroke]{\pgfxy(15.00,20.00)}{0.70mm}
\pgfmoveto{\pgfxy(10.00,15.00)}\pgflineto{\pgfxy(10.00,10.00)}\pgfstroke
\pgfmoveto{\pgfxy(10.00,10.00)}\pgflineto{\pgfxy(15.00,15.00)}\pgfstroke
\pgfmoveto{\pgfxy(15.00,20.00)}\pgflineto{\pgfxy(15.00,15.00)}\pgfstroke
\pgfputat{\pgfxy(8.00,14.00)}{\pgfbox[bottom,left]{\fontsize{7.97}{9.56}\selectfont \makebox[0pt][r]{2}}}
\pgfputat{\pgfxy(8.00,9.00)}{\pgfbox[bottom,left]{\fontsize{7.97}{9.56}\selectfont \makebox[0pt][r]{1}}}
\pgfputat{\pgfxy(17.00,14.00)}{\pgfbox[bottom,left]{\fontsize{7.97}{9.56}\selectfont 3}}
\pgfputat{\pgfxy(17.00,19.00)}{\pgfbox[bottom,left]{\fontsize{7.97}{9.56}\selectfont 4}}
\pgfsetlinewidth{0.15mm}\pgfmoveto{\pgfxy(16.49,13.22)}\pgflineto{\pgfxy(19.54,13.22)}\pgflineto{\pgfxy(19.54,17.14)}\pgflineto{\pgfxy(16.49,17.14)}\pgfclosepath\pgfstroke
\end{pgfpicture}%
$  \\
 \hline
(5)&
$\sigma \in \A_{4,k}'$ & $\underbrace{4 \, 1}_{\boxed{3}} 3 \, 2$ & $\underbrace{4\, 2}_{\boxed{2}} 3 \, 1$ & $\underbrace{3\, 1}_{\boxed{2}} 4 \, 2$ & $\underbrace{3\, 2}_{\boxed{1}} 4 \, 1$ & $\underbrace{2 \, 1}_{\boxed{1}} 4 \, 3$ \\
\hline
(6)&
$\sigma' \in \mathcal{MM}_{4,k}$ & $\underbrace{1 \, 4}_{\boxed{3}} 2 \, 3$ & $\underbrace{1\, 3}_{\boxed{2}} 2 \, 4$ & $\underbrace{3\, 1}_{\boxed{2}} 4 \, 2$ & $\underbrace{2\, 3}_{\boxed{1}} 1 \, 4$ & $\underbrace{2 \, 1}_{\boxed{1}} 4 \, 3$ \\
\hline
(7)&
$\tau_1 \in \mathcal{MM}_{4,k}'$ &  $\boxed{2} 1 \, 4 \, 3$  & $2 \boxed{3} 1 \, 4$ & $\boxed{3} 1 \, 4 \, 2$ & $\boxed{(4)}1\,3\,2\,4$ & $\boxed{(4)} 1\,4\,2\,3$ \\
\hline
(8)&
$\tau_2 \in \mathcal{MM}_{4,k}''$ &  $2\, 1 \, 4 \boxed{3}$ & $1 \, 4 \boxed{2} 3$ & $3 \, 1 \, 4 \boxed{2}$  & $1 \, 3 \, 2 \, 4\boxed{(1)}$ & $2\, 3 \, 2 \, 4\boxed{(1)}$  \\
\hline
(9)&
$\pi' \in \mathcal{GW}_{4,k}$ & $6\boxed{3} 4 \, 2 \, 1 \, 5$ & $6 \, 4 \boxed{2} 3 \, 1 \, 5$ & $6 \boxed{2} 3 \, 4 \, 1 \, 5$ & $6 \, 4 \, 3 \, 2 \boxed{1} 5$  & $6 \, 2 \, 4 \, 3 \boxed{1} 5$ \\
\hline
(10)&
$\pi'' \in \mathcal{RW}_{4,k}$ & $6 \, 2 \, 1 \, 4 \boxed{3} 5$ & $6 \boxed{2} 3 \, 1 \, 4 \, 5$ & $6 \, 1 \, 4 \boxed{2} 3 \, 5$ & $6 \, 3 \boxed{1} 2 \, 4\, 5$  & $6 \, 2 \, 4 \boxed{1} 3 \, 5$ \\
\hline
(11)&
$u \in \mathcal{UW}_{4,k}$ & $1 \, 1 \, 1  \boxed{3}$ & $1 \, 1 \, 2 \boxed{2}$ & $1 \, 1 \, 1 \boxed{2}$ & $1 \, 1 \, 2 \boxed{1}$  & $1 \, 1 \, 1 \boxed{1} $ \\
\hline
(12)&
$v \in \mathcal{UW}'_{4,k}$ & $1 \, 1 \underbrace{1 \, 1}_{\boxed{2}}$ & $1 \, 1 \underbrace{2 \, 1}_{\boxed{3}}$ & $1 \, 1 \underbrace{1 \, 2}_{\boxed{3}}$ & $1 \, 1 \underbrace{2 \, 2}_{\boxed{4}}$  & $1 \, 1 \underbrace{1 \, 3}_{\boxed{4}}$  \\
\hline
\end{tabular}
\caption{\label{fig:E4}Twelve interpretations for $E_{4,k}, 1 \leq k \leq 4$}
\end{figure}

\subsection{An open problem}
Consider the  so-called \textit{reduced tangent numbers} $t_n=E_{2n+1}/2^n$.
Poupard~\cite{Pou89}  proved  that $t_n$ is the number of 0-2 increasing trees (i.e., the trees in $\Tn$ such that every vertex has 0 or 2 children). However,
 it seems that  there is no interpretation \`a  la Andr\'e for $t_{n}$  in down-up  permutations.
Furthermore, let $t_{n,k}$ denote the number of 0-2 increasing trees such that the leaf of the minimal path is $k$, then  the sequence $(t_{n,k})$
is obviously a refinement of $t_{n}$ as Entringer numbers are for Euler numbers.

Let  $s_n$ (resp. $s_{n,k}$) be the number of \emph{split-pair arrangements} of $[n]$, that are arrangements  $\sigma$ of the multi-set $\{0,0,1,1,2,2,\ldots,n,n\}$
such that $\sigma(1)=n$ (resp. $\sigma(1)=\sigma(k+1)=n$)  and,  between the two occurrences
of $i$ in $\sigma$ ($0 \leq i \leq n-1$), the number $i+1$ appears exactly once.

 Recently, Graham and Zang~\cite{GZ08} proved that
 for $1 \leq k \leq n$, $s_{n,k} = t_{n,k}$. In particular,  $s_n = t_n$.
 There is no bijective proof  between Poupard's model and Graham and Zang's model.

\section*{Acknowledgement} 
We thank the two referees for their careful readings and helpful comments on a previous version of this paper.
This work was partially supported by the French National Research Agency under the grant ANR-08-BLAN-0243-03.

\nocite{*}
\bibliographystyle{plain}

\begin{thebibliography}{11}


\bibitem[And79]{And79}
D.~Andr\'{e}, \emph{D\'{e}veloppement de sec x et tan x}, C. R. Math. Acad. Sci. Paris 88 (1879), 965--979.

\bibitem[Cal05]{Cal05}
D.~Callan, \emph{A note on downup permutations and increasing 0-1-2 trees}, preprint (2009).


\bibitem[Che08]{Che08}
D.~Chebikin, \emph{Variations on descents and inversions in permutations},
  Electron. J. Combin. \textbf{15} (2008), no.~1, Research Paper 132, 34.


\bibitem[Don75]{Don75}
R.~Donaghey, \emph{Alternating  permutations and binary increasing trees}, J.
  Combinatorial Theory Ser. A, 18 (1975), 141--148.

\bibitem[Ent66]{Ent66}
R.C.~Entringer, \emph{A combinatorial interpretation of the Euler and Bernoulli numbers}, Nieuw Arch. Wisk. 14 (1966), 241--246.

\bibitem[FS73]{FS73}
D.~Foata and M.-P. Sch{\"u}tzenberger, \emph{Nombres d'{E}uler et permutations
  alternantes}, A survey of combinatorial theory [J.N. Srivastava et al., eds.],
   Amsterdam, North-Holland (1973), 173--187.

\bibitem[GZ08]{GZ08}
R.~Graham and N.~Zang, \emph{Enumerating split-pair arrangements}, J. Combin.
  Theory Ser. A 115 (2008), no.2, 293--303.

  \bibitem[JV10]{JV09}
M.~Josuat-Verg\`{e}s, \emph{A $q$-enumeration of alternating  permutations},
  European J. Combin. (2010), no.~doi:10.1016/j.ecj.2010.01.008.

\bibitem[Kem33]{Kem33}
A.~J.~Kempner, \emph{On the shape of polynomial curves}, T\^{o}hoku Math. Journal 37 (1933) 347--362.

\bibitem[KPP94]{KPP94}
A.~G.~Kuznetsov, I.~M.~Pak, and A.~E.~Postnikov, \emph{Increasing trees and
  alternating  permutations}, Uspekhi Mat. Nauk 49 (1994), 79--110.

\bibitem[Mar06]{Mar06}
J.~L.~Martin, \emph{The slopes determined by $n$ points in the plane}, Duke Math. J. 131 (2006), no.1, p.119--165

\bibitem[MW09]{MW09}
J.~L.~Martin and J.~D.~Wagner, \emph{Updown numbers and the initial monomials of the slope variety}, Electron. J. Combin. 16 (2009), no.1, Research Paper 82, 8pp.

\bibitem[MSY96]{MSY96}
J.~Millar, N.~J.~A.~Sloane and N.~E.~Young, \emph{A new operation on sequences: the Boustrouphedon transform}, J.Combinatorial Theory, Series A 76(1):44--54 (1996).


\bibitem[Pou82]{Pou82}
C.~Poupard, \emph{De nouvelles significations \'enum\'eratives des nombres
  d'{E}ntringer}, Discrete Math. 38 (1982), 265--271.

\bibitem[Pou89]{Pou89}
C.~Poupard, \emph{Deux propri\'{e}t\'{e}s des arbres binaires ordonn\'{e}s stricts}, European J. Combin. 10 (1989), 369--374.

\bibitem[Pou97]{Pou97}
C.~Poupard, \emph{Two other interpretations of the Entringer numbers}, European Journal of Combinatorics 18 (1997), 939--943.

\bibitem[Sei77]{Sei77}
L.~Seidel, \emph{\"{UW}ber eine einfache Entstehungsweise der Bernoullischen Zahlen und einiger verwandten
Reihen}, Sitzungsber. M�nch. Akad. 4 (1877), 157--187.

\bibitem[SZ10]{SZ10}
H. Shin and J. Zeng, The q-tangent and q-secant numbers via continued fractions, arXiv:0911.4658, to appear in European Journal of Combinatorics, 2010.

\bibitem[Sta09]{Sta09}
R.~Stanley, \emph{A Survey of  Alternating  Permutations}, \texttt{arXiv:0912.4240} (2009).


\end{thebibliography}

\end{document}